\newcommand{\mean}[1]{\left\langle #1 \right\rangle}
\newcommand{\bias}[1]{\overline{#1}}
\def \pt {\partial_{t}}
\def \u{{\mathbf u}}
\def \g{{\mathbf g}}
\newfont{\eufmtwelve}   {eufm10 scaled \magstep1}
\newfont{\eufmten}      {eufm10 }
\newfont{\eufmnine}     {eufm9 }
\newfont{\eufmeight}    {eufm8 }
\newfont{\eufmseven}    {eufm7 }
\newfont{\eufmsix}      {eufm6 }
\newfont{\eufmfive}     {eufm5 }
\newfont{\eusmtwelve}   {eusm10 scaled \magstep1}
\newfont{\eusmten}      {eusm10}
\newfont{\eusmnine}     {eusm9 }
\newfont{\eusmeight}    {eusm8 }
\newfont{\eusmseven}    {eusm7 }
\newfont{\eusmsix}      {eusm6 }
\newfont{\eusmfive}     {eusm5 }
\newfont{\msbmtwelve}   {msbm10 scaled \magstep1}
\newfont{\msbmtw}   {msbm10 scaled \magstep3}
\newfont{\msbmeight}    {msbm8}
\newfont{\msbmten}      {msbm8}
\def \no#1#2#3 {{\bf #1} (#3), #2.}
\def \eds#1#2#3 {#1, #2, #3.}
\newcommand{\ddt}{\frac{\mbox{d}}{\mbox{dt}}}
\newcommand{\myH}[1]{\ensuremath{\mathcal{H}_{#1}}}
\newcommand{\dataConst}{A_{t,\tau}}
\newcounter{enumi_tmp}
\newcounter{enumi_saved}
\newenvironment{hypothesis}{\begin{enumerate}%
        \setcounter{enumi}{\value{enumi_saved}}%
    }{%
        \setcounter{enumi_saved}{\value{enumi}}%
        \end{enumerate}
    }
\newcounter{enumi_saved_bis}
\newenvironment{hypothesis_bis}{\begin{enumerate}%
        \setcounter{enumi}{\value{enumi_saved_bis}}%
    }{%
        \setcounter{enumi_saved_bis}{\value{enumi}}%
        \end{enumerate}
    }
\newcounter{enumi_saved_tris}
\newenvironment{hypothesis_tris}{\begin{enumerate}%
        \setcounter{enumi}{\value{enumi_saved_tris}}%
    }{%
        \setcounter{enumi_saved_tris}{\value{enumi}}%
        \end{enumerate}
    }
\def\ds{\displaystyle}
\def \l {\langle}
\def \r {\rangle}
\subjclass[2000]{35B41, 35B45, 35K25}
\keywords{XXXX}
\begin{document}

\title[Pullback exponential attractor for $2D$ CHNS system]{Pullback exponential attractor for a Cahn-Hilliard-Navier-Stokes system in $2D$}

\author{Stefano Bosia}
    \address{Politecnico di Milano
        \newline\indent Dipartimento di Matematica ``F. Brioschi''
        \newline\indent Piazza Leonardo da Vinci 32
        \newline\indent I-20133 Milano, Italy}
    \email{stefano.bosia@mail.polimi.it}

\author{Stefania Gatti}
    \address{Universit\`a di Modena e Reggio Emilia
        \newline\indent Dipartimento di Scienze Fisiche, Informatiche e Matematiche
        \newline\indent Via Campi 213/B
        \newline\indent I-41125 Modena, Italy}
    \email{stefania.gatti@unimore.it}

\begin{abstract}
    We consider a model for the evolution of a mixture of two incompressible and partially immiscible Newtonian fluids in two dimensional bounded domain.
     More precisely, we address the well-known model $H$ consisting of the Navier-Stokes equation with non-autonomous external forcing term for the (average) fluid velocity, coupled with a convective Cahn-Hilliard equation with polynomial double-well potential describing the evolution of the relative density of atoms of one of the fluids. We study the long term behavior of solutions and prove that the system possesses a pullback exponential attractor. In particular the regularity estimates we obtain depend on the initial data only through fixed powers of their norms and these powers are uniform with respect to the growth of the polynomial potential considered in the Cahn-Hilliard equation.
\end{abstract}

\maketitle

\section{Introduction}

The modeling of multi-phase flows has been a field of intense mathematical research in the last years. In particular, its relevance for applications (see, e.g., \cite{Gurtin1996, Heida2012, Hohenberg1977, Morro2010} and references therein) has motivated several different approaches, among which diffuse interface methods seem to combine both numerical efficiency and theoretical tractability. One of their instances is given by the so called model $H$, which was first proposed in~\cite{Hohenberg1977} and~\cite{Siggia1979} and then rigorously derived in~\cite{Gurtin1996}. In the corresponding system, the two-phase flow is described by a (mean) velocity field~\vect{u}, which satisfies a Navier-Stokes type equation, and an order parameter field~$\psi$, which represents the difference of the relative concentrations of the two fluids and which solves a convective Cahn-Hilliard equation. More precisely, given a bounded and smooth domain $\Omega \subset \mathbb{R}^{2}$, assuming that the viscosity of the mixture is a constant $\nu>0$, the two-dimensional model $H$ reads as
\begin{equation}\label{E:PDE}
    \begin{cases}
        \dt{\vect{u}} + \vect{u} \cdot \nabla \vect{u} - \nu \lapl \vect{u}= \nabla p + \mu \nabla \psi + \vect{g},\quad \text{in $\Omega$,}\\
        \nabla \cdot \vect{u} = 0, \quad \text{in $\Omega$,}\\
        \dt{\psi} + (\vect{u} \cdot \nabla) \psi = m \lapl \mu, \quad \text{in $\Omega$,}\\
        \mu = -\epsilon\lapl \psi + \tfrac{1}{\epsilon} f(\psi), \quad \text{in $\Omega$,}\\
    \end{cases}
\end{equation}
where $\mu$ is the so called chemical potential with constant mobility $m \geqslant 0$, $\vect{g}$ is a time-dependent bulk force, $f$ is the derivative of a double-well potential~$F$ while $p = \ds \pi + \frac{\epsilon}{2} |\nabla \psi|^2 + \tfrac{1}{\epsilon} F(\psi)$ introduces the pressure~$\pi$ as well as the positive parameter~$\epsilon$ rendering the interaction between the two phases. In particular, $\epsilon$ is related with the small but not negligible thickness of the interface.

This system is usually complemented by homogeneous Dirichlet boundary conditions on the velocity field, no flux boundary conditions on the order parameter field and chemical potential, namely,
\begin{equation}\label{E:BC}
    \vect{u}    = \mathbf 0, \quad \dnu{\psi}  = 0, \quad \dnu{\mu} = 0, \qquad \text{on $\partial \Omega$}.
\end{equation}
Being the problem non-autonomous, we specify the initial values at a given time $\tau\in \mathbb R$ for the
state variables, that is,
\begin{equation*}
    \vect{u}(\tau) = \vect{u}_{0},\quad  \qquad \psi(\tau) = \psi_{0},\qquad \quad \text{in $ \Omega$}.
\end{equation*}
We recall that, in this model, the chemical potential of the binary mixture $\mu$ is given by the variational derivative of the free energy functional for the Cahn-Hilliard equation
\begin{equation*}
    \mathcal{F}(\psi) \eqdef \int_{\Omega} \left( \frac{\epsilon}{2} |{\nabla \psi}|^{2} + \eta F(\psi) \right)\, \mathrm{d}\vect{x},
\end{equation*}
where $F(\psi)$ is a suitable double-well potential characterizing the phase decomposition of the mixture. Since $F'=f$, the fourth equation in \eqref{E:PDE} follows.

The Cahn-Hilliard model for spinodal decomposition and coarsening during quenching of alloys was first proposed in~\cite{Cahn1958}. In this setting a thermodynamically consistent double-well potential $F$ is naturally seen to be logarithmic (see~\cite{Cahn1961} and references therein). However, this singular form for the potential causes major difficulties in the numerical and theoretical study of the Cahn-Hilliard system so that in applications it is often replaced by a polynomial approximation like
\begin{equation*}
    F(\psi) = \Const (1 - \psi^{2})^{2}.
\end{equation*}
In this context, a possible approach to deal with the physically relevant case consists in suitably approximating the singular potential by polynomials of increasing order (see~\cite{Frigeri2012a, Frigeri2012} for an application of this technique to a system closely related to the model $H$). Our paper deals with a polynomial potential $F$ of arbitrary order $p+3$, for $p\geqslant 1$ (in fact, the lower order case is much easier).

From the mathematical viewpoint, system~\eqref{E:PDE}-\eqref{E:BC} has been firstly studied in~\cite{Starovoitov1997} for $\Omega = \mathbb{R}^{2}$. Then, in the case of bounded domains, global existence results for both weak and strong solutions in the $2D$ case were obtained in~\cite{Boyer1999} (see also~\cite{Boyer2001}). More recently, the case of logarithmic potentials has been considered in~\cite{Abels2009a} (see also~\cite{Abels2009b}), where, in particular, the convergence of solutions to a single equilibrium has been established in absence of nongradient external forces. This issue has also been investigated in~\cite{Zhao2009} for smooth potentials. A rather complete picture of the longtime behavior in the case $N = 2$ on a bounded domain can be found in~\cite{Gal2010}. In the case $N = 3$, existence of trajectory attractors has been demonstrated in~\cite{Gal2010a} with time-dependent external forces. Many related models have also been extensively studied in recent years. Among others we recall non-Newtonian Ladyzhenskaya fluids (see~\cite{Grasselli2011} for the $2D$ case and~\cite{Bosia2013} for some partial results in the $3D$ case), non-local interactions (see~\cite{Frigeri2012a, Frigeri2012}) and chemically reacting mixtures (see~\cite{Bosia2013a} and references therein).

Concerning the mathematical theory of infinite dimensional dynamical systems, pullback exponential attractors represent a new instrument recently introduced in the literature (see~\cite{Langa2010}). The related theory combines both the advantages of pullback attractors in the non-autonomous case (see~\cite{Carvalho2013} for a comprehensive introduction to the theory of pullback attractors) and of the exponential attractors in Banach spaces in their most general form known today (see~\cite{Efendiev2000}).

The main result of this paper is the existence of a pullback exponential attractor for the system \eqref{E:PDE}-\eqref{E:BC}. As a byproduct, we derive several regularity estimates for the solutions to the Cahn-Hilliard-Navier-Stokes system. These have an interest of their own due to their uniform structure with respect to the growth of the double-well potential $F$. Indeed, if the potential $f(\psi)$ is assumed to satisfy $|f(\psi)| \leqslant \Const (|\psi|^{p+2} + 1)$, we are able to control the solutions only by suitable powers of the norms of the initial data independent of $p$. This is not an easy task since the computations repeatedly involve $f(\psi)$ and its derivatives, which are naturally estimated as
\begin{equation*}
    \Lpnorm[q]{f(\psi)} \leqslant \Const ( \Lpnorm[\infty]{\psi}^{p+2} +1)\leqslant \Const (\Lpnorm{\psi}^{\sfrac{(p+2)}{2}} \Lpnorm{\lapl \psi}^{\sfrac{(p+2)}{2}}+1),
\end{equation*}
carrying the polynomial character of $F$ directly into play. This obstacle is circumvented by suitably handling the nonlinear terms so that the dependence on the ``shape'' of the potential is confined to the multiplicative constants appearing in our results.

Therefore, these estimates can be seen as a preliminary step forwards an effective approximating procedure able to deal with the more physically relevant case given by the singular potential.

The plan of the paper goes as follows. In Section~\ref{S:functional setting} we introduce the functional setting required to study system~\eqref{E:PDE} and the main results obtained in this work. After recalling the theory of pullback exponential attractors in Section~\ref{S:theory}, we first derive basic energy estimates (Section \ref{S:Existence}) and then higher order regularity estimates (Section~\ref{S:higher_order}): in particular, our results are uniform w.r.t.\ the shape of $F$ in the sense made precise above. We then derive continuity results and time regularity for solutions in
Sections \ref{S:continuous dependence} and \ref{S:time regularity}. Finally, in Section~\ref{S:validation} we are able to check all the assumptions of the abstract results of Section~\ref{S:theory} in the case of system~\eqref{E:PDE}, concluding the proof of our main theorem.

\section{Functional setting and main results}\label{S:functional setting}

We will denote by $\Omega$ a smooth bounded domain of $\mathbb{R}^{2}$. The spaces $\Lp[p](\Omega)$ will be the usual Banach spaces of $p$-integrable functions with $p \in [1, \infty]$. The Sobolev-Hilbert space, which consists of $k$-differentiable functions  in the sense of distributions with square integrable derivatives, will be denoted by~$\Hs[k](\Omega)$. We shall use the bold symbols~$\Lpvect[p](\Omega)$ and $\Hsvect[k](\Omega)$ for the corresponding spaces of vector valued functions. The space of functions belonging to $\Hs[k](\Omega)$ and vanishing on the boundary will be denoted by $\Hs[k]_{0}(\Omega)$. Norms in the Sobolev spaces $\Hs[k](\Omega)$ will be denoted by $\norm{\Hs[k]}{\cdot}$, whereas we will use the shorthand notation $\Lpnorm[p]{\cdot}$ for the norm in $L^{p}(\Omega)$ spaces, $1 \leqslant p \leqslant \infty$. In order to study the velocity field $\vect{u}$ we introduce the usual framework of divergence-free distributions, i.e.,
\begin{equation*}
    \mathcal{V} \eqdef \{ \vect{\phi} \in \Cont[\infty]_{c}(\Omega; \mathbb{R}^{2}) \mid \nabla \cdot \vect{\phi} = 0 \}.
\end{equation*}
Then we consider its closure under suitable distributional norms
\begin{equation*}
    \Lpvect_{\divfree}(\Omega)          \eqdef \closure[\Lpvect(\Omega)]{\mathcal{V}}, \quad
    \Hsvect_{0, \divfree}(\Omega)       \eqdef \closure[\Hsvect_{0}(\Omega)]{\mathcal{V}}.
\end{equation*}
We also introduce the Leray projector $\mathbb{P} \colon \Lpvect(\Omega) \to \Lpvect_{\divfree}(\Omega)$ mapping every element of $\Lpvect(\Omega)$ to its divergence-free part. Furthermore, we will indicate by $\Hsvect[-1]_{\divfree}(\Omega)$ the dual space of $\Hsvect_{0, \divfree}(\Omega)$. In $\Hs_{0}(\Omega)$ and $\Hsvect_{0, \divfree}(\Omega)$ we will consider the following norms
\begin{equation*}
    \norm{\Hs_{0}(\Omega)}{f}^{2} \eqdef \Lpnorm{\nabla f}^{2} = \sum_{i = 1}^{n} \int_{\Omega} | f_{,i} |^{2} \, \mathrm{d}\vect{x} \qquad \norm{\Hsvect_{0, \divfree}(\Omega)}{\vect{f}}^{2} \eqdef \Lpnorm{\nabla \vect{f}}^{2} = \sum_{i,j = 1}^{n} \int_{\Omega} | \vect{f}_{i,j} |^{2} \, \mathrm{d}\vect{x}.
\end{equation*}
Finally we will denote by $\duality{f}{g}$ both the scalar product in $\Lp(\Omega)$ (or $\Lpvect(\Omega)$) and the duality pairing between $\Hs[-1](\Omega)$ and $\Hs_{0}(\Omega)$ (or their vector valued analogues), the exact meaning being clear from the context.

Since the second equation in \eqref{E:PDE} together with the boundary condition imply that the bulk integral of the order parameter is preserved by the evolution, we need to suitably account for this feature. First of all we define the mean value of $f$ over the domain $\Omega$ as
\begin{equation*}
    \mean{f} \eqdef \frac{1}{|\Omega|} \int_{\Omega} f \, \mathrm{d}\vect{x},
\end{equation*}
denoting by $\bias{f}$ the mean free part of $f$, that is,
\begin{equation*}
    \bias{f} \eqdef f - \mean{f}.
\end{equation*}
Thus, up to a shift of the order parameter field, we can always assume that the mean of $\psi$ is zero at the initial time and, due to the conservation of mass enforced by the Neumann boundary conditions, this will remain true for all positive times. Then the order parameter will belong to subspaces of $\Lp[p](\Omega)$ and $\Hs[k](\Omega)$ consisting of functions with zero mean, defined as
\begin{equation*}
    \Lp[p]_{(0)}(\Omega) \eqdef \{ v \in \Lp[p](\Omega) \mid \mean{v} = 0 \}, \quad \Hs[k]_{(c0)}(\Omega) \eqdef \{ v \in \Hs[k](\Omega) \mid \mean{v} = 0 \}.
\end{equation*}
Here we can use Poincar\'e's inequality (and some of its variants) at several stages when estimating the Sobolev norms of $\psi$. Indeed, the boundary conditions and the above definitions imply that
\begin{equation*}
    \int_{\Omega} \psi = 0, \quad \dnu{\psi} = 0 \text{ on $\partial \Omega$}, \quad \int_{\Omega} \lapl \psi = \int_{\partial \Omega} \dnu{\psi} = 0, \quad \dnu{\lapl \psi} = 0 \text{ on $\partial \Omega$}.
\end{equation*}
Therefore, all the norms $\norm{\Hs[j]}{\psi}$, $j = 1, \ldots, 4$ are equivalent to the $\Lp$-norms of the derivatives of order $j$. Moreover, Korn's inequality holds. Thus we have
\begin{equation*}
    \norm{\Hs}{\psi} \sim \Lpnorm{\nabla \psi}, \quad \norm{\Hs[2]}{\psi} \sim \Lpnorm{\lapl \psi}, \quad \norm{\Hs[3]}{\psi} \sim \Lpnorm{\nabla \lapl \psi}, \quad \norm{\Hs[4]}{\psi} \sim \Lpnorm{\lapl^{2} \psi}.
\end{equation*}

Finally, the functional spaces for the whole solution $(\vect u,\psi)$ are
\begin{equation*}
    \myH0 \eqdef \Lpvect_{\divfree}(\Omega) \times \Hs_{(c0)}(\Omega) \qquad \myH1 \eqdef \Hsvect_{0,\divfree}(\Omega) \times \Hs[2]_{(c0)}(\Omega),
\end{equation*}
which arise naturally in the study of the process generated by the solution of system~\eqref{E:PDE}.

We can now list the assumptions on the potential $F(\psi)$, starting  with some hypotheses concerning its regularity and growth:
\begin{hypothesis}
    \item \label{Hp:regularity} $F \in \Cont[5](\mathbb{R})$.
    \item \label{Hp:growth} $F(y)$ grows at most polynomially fast at infinity, namely
        \begin{equation*}
            |f''(y)| \leqslant C_f (1 + |y|^{p}),
        \end{equation*}
        for some positive constants $p$ and $C_f$.
    \item \label{Hp:coercivity} The potential is coercive, i.e.\ there exist positive real numbers $q$ and $c_f$ such that
        \begin{equation}\label{E:F_coercivity}
            F(y) \geqslant c_f \left( |y|^{2+q} - 1 \right)
        \end{equation}
        holds for all $y \in \mathbb{R}$.
\end{hypothesis}
Recalling that the potential $F$ appears in system~\eqref{E:PDE} only through its derivative, without loss of generality we can further assume that
\begin{hypothesis}
    \item \label{Hp:positivity} the functional $F$ is strictly positive, i.e.\ $F(y) > 0$, $\forall y \in \mathbb{R}$.
\end{hypothesis}

We now give some additional assumptions concerning the shape of the double-well potential~$F$.
\begin{hypothesis}
    \item \label{Hp:splitting} $F(y)$ is a quadratic perturbation of a regular convex function defined on the whole $\mathbb{R}$, that is,
        \begin{equation*}
            F(y) = F_{0}(y) - \alpha y^{2} + \gamma y + \beta,
        \end{equation*}
        where $F_{0} \in \Cont[5](\mathbb{R})$ is convex and $\alpha \in \mathbb{R}$ is a positive constant.
    \item \label{Hp:constants} Up to a suitable choice of the constants $\beta$ and $\gamma$ in Assumption~\eqref{Hp:splitting}, the convex part of the potential $F_{0}$ satisfies:
        \begin{equation*}
            F_{0}(0) = F'_{0}(0) = 0.
        \end{equation*}
\end{hypothesis}
In order to obtain higher order estimates having uniform structure with respect to the growth of $f$, we will assume that $F$ behaves as a polynomial at infinity. In particular we will suppose
\begin{hypothesis}
    \item \label{Hp:polynomial_growth} The relation $q = p+1$ holds in Assumptions~\eqref{Hp:growth} and~\eqref{Hp:coercivity}.
        Moreover, for any $k = 0,1,2,3,4$ there exists a positive constant $c_{k}$ such that
        \begin{equation} \label{Fcontrol}
            |f^{(k)}(y)|\leqslant c_{k} [1 + F(y)^{\sfrac{(p+2-k)}{(p+3)}}],\quad \forall y\in \mathbb{R}.
        \end{equation}
\end{hypothesis}

\begin{remark}\label{rem:fp+3}
    In the following Assumption~\eqref{Hp:polynomial_growth} will be necessary in order to estimate the $\Lp[r]$ norm of the derivatives of the potential $F$ in terms of some $\Lp[s]$ norm of the potential itself. In particular, when dealing with higher order estimates we will often use the immediate consequence of Assumption~\eqref{Hp:polynomial_growth}
    \begin{equation*}
        \Lpnorm[\frac{p+3}{p+2-k}]{f^{(k)}(\psi)}^{\sfrac{(p+3)}{(p+2-k)}} \leqslant \Const (1 + \Lpnorm[1]{F(\psi)}),
    \end{equation*}
    for any $\psi$ such that $F(\psi)\in \Lp[1](\Omega)$ and for some constant $\Const$ depending on $k$ and $\Omega$.
\end{remark}

\begin{remark}
Throughout the paper, we will always assume that $p \geqslant 2$, being much easier the case when the potential $F(y)$ grows at most as $y^{5}$ at infinity. In particular, all estimates in the following sections hold for $p \geqslant 1$, except for ~\eqref{E:continuous_dependence_last_term} below. However, note that a suitable estimate for this term can be produced also in the case $p \in [1,2)$ (cf.~Remark~\ref{R:continuous_dependence_p=1}) under the assumption
    \begin{equation*}
        f^{(iv)}(y) \leqslant \Const \qquad \forall y \in \mathbb{R}.
    \end{equation*}
    The case $p = 1$ is particularly relevant for applications since the polynomial potential $F(y) = (y^{2} - 1)^{2}$, which is often used in numerical simulations, falls in this setting.
\end{remark}

Finally, we assume the non-autonomous forcing term (symbol) $\vect{g}$ appearing in equation~\eqref{E:PDE} satisfies the following conditions:
\begin{hypothesis_tris}
    \item \label{Hp:gL2loc} $\g\in \Bochner{\Lp_{\loc}}{-\infty}{\infty}{\Lpvect_{\divfree}(\Omega)}$.
    \item \label{Hp:gM2} $\g\in \Bochner{\Lp_{\uloc}}{-\infty}{t}{\Lpvect_{\divfree}(\Omega)}$, for any $t\in \mathbb R$, that is,
        \begin{equation*}
            M_\g(t)\doteq \sup_{r\leqslant t} \int_{r-1} ^r \Lpnorm{\vect{g}(s)}^{2} \, \mathrm{d} s<\infty  ,\quad \forall t\in \mathbb{R}.
        \end{equation*}
    \item \label{Hp:gMq} There exist $t_0\in \mathbb{R}$ and $q>2$ such that $\g\in \Bochner{\Lp[q]_{\uloc}}{-\infty}{t_0}{\Lpvect_{\divfree}(\Omega)}$, namely,
        \begin{equation*}
            M_{\g,q}(t_0)\doteq \sup_{r\leqslant t_0} \int_{r-1} ^r \Lpnorm{\vect{g}(s)}^{q} \, \mathrm{d} s<\infty .
        \end{equation*}
\end{hypothesis_tris}

In this paper we will prove the following main results:
\begin{theorem}\label{T:main_result}
    Assume that $\vect{g}$ satisfies~\eqref{Hp:gL2loc} and~\eqref{Hp:gMq} and let $U_{\vect{g}}(t, \tau) \colon \myH0 \to \myH0$ be the solution operator for the system~\eqref{E:PDE}. Then there exists a family $\widetilde{\mathcal{M}}_{U_{\vect{g}}} = \{ \widetilde{\mathcal{M}}_{U_{\vect g}}(t) \colon t \leqslant t_{0} \}$ of nonempty compact subsets of $\myH1$, which is a pullback exponential attractor for system~\eqref{E:PDE} (see Theorem~\ref{T:continuous_time_attractor} below) in the topology of~$\myH1$.
\end{theorem}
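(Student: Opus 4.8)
The plan is to verify the hypotheses of the abstract pullback exponential attractor result quoted as Theorem~\ref{T:continuous_time_attractor}, reducing the PDE problem to an abstract one for the process $U_{\vect g}(t,\tau)$ on the Banach spaces $\myH0$ and $\myH1$. The abstract machinery (in the spirit of~\cite{Langa2010,Efendiev2000}) typically requires: (i) existence of a family of pullback absorbing sets that are bounded in the stronger space $\myH1$ and absorb at a uniform exponential rate; (ii) a smoothing/squeezing property, i.e.\ a decomposition of the difference of two trajectories into an exponentially decaying part in $\myH0$ plus a part that is controlled in $\myH1$ (or that $U_{\vect g}(t,\tau)$ maps $\myH0$-bounded sets into $\myH1$-bounded sets after a fixed time, with a Lipschitz-type estimate); and (iii) some H\"older continuity of $(t,\tau)\mapsto U_{\vect g}(t,\tau)$ in an appropriate sense, so that the discrete exponential attractor can be interpolated to a continuous-time family. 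All three ingredients are exactly what Sections~\ref{S:Existence}--\ref{S:time regularity} are designed to supply, so the proof of Theorem~\ref{T:main_result} is essentially a bookkeeping argument that assembles them.

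The concrete steps I would carry out, in order, are the following. First, using the basic energy estimates (Section~\ref{S:Existence}) together with the coercivity assumption~\eqref{Hp:coercivity} and the forcing assumptions~\eqref{Hp:gL2loc},~\eqref{Hp:gM2}, establish that there is a pullback absorbing family $\{\mathcal B_0(t)\}_{t\le t_0}$ in $\myH0$ whose ``size'' is governed by $M_{\vect g}(t)$; then, invoking the higher order estimates of Section~\ref{S:higher_order}, upgrade this to a pullback absorbing family $\{\mathcal B_1(t)\}_{t\le t_0}$ bounded in $\myH1$, with absorption time depending only on fixed powers of the relevant norms (and crucially \emph{not} on $p$, per~\eqref{Fcontrol} and Remark~\ref{rem:fp+3}). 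Second, fix a time $t_0$ and the reference set, and use~\eqref{Hp:gMq} (the $L^q_{\mathrm{uloc}}$ integrability with $q>2$) to guarantee the extra regularity in time needed to make the absorbing set in $\myH1$ compact in $\myH1$ after a further fixed lapse of time, via the Aubin--Lions--Simon lemma applied to the time-regularity estimates of Section~\ref{S:time regularity}. Third, from the continuous-dependence estimates of Section~\ref{S:continuous dependence}, extract the Lipschitz-in-$\myH0$ and the controlled-in-$\myH1$ bounds for differences of solutions starting in $\mathcal B_1$, which furnish the smoothing property; here is where the term~\eqref{E:continuous_dependence_last_term} and the restriction $p\ge 2$ enter. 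Fourth, apply the abstract Theorem~\ref{T:continuous_time_attractor} to produce the discrete pullback exponential attractor on the time grid, and finally use the H\"older continuity in $(t,\tau)$ to fill in the continuous-time family $\widetilde{\mathcal M}_{U_{\vect g}}=\{\widetilde{\mathcal M}_{U_{\vect g}}(t):t\le t_0\}$, which is the asserted object.

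I expect the main obstacle to be verifying the smoothing property with constants that are uniform with respect to the polynomial growth exponent $p$. Every time one differentiates the Cahn--Hilliard part one meets $f^{(k)}(\psi)$, and the naive bound $\Lpnorm[\infty]{\psi}^{p+2}$ destroys uniformity; the remedy is systematically to use~\eqref{Fcontrol} to convert powers of $f^{(k)}(\psi)$ into powers of $F(\psi)$ in $L^1$, which is controlled by the energy, and to absorb the remaining $\myH{}$-norms via Gagliardo--Nirenberg and Young with exponents chosen so that the $p$-dependence sits only in multiplicative constants. Handling the coupling terms $\mu\nabla\psi$ in the Navier--Stokes equation and $(\vect u\cdot\nabla)\psi$ in the Cahn--Hilliard equation within the difference estimate, while keeping this uniformity and still closing a Gronwall inequality with an exponentially small prefactor, is the delicate point; once that estimate is in hand, the rest of the assembly into the abstract framework is routine.
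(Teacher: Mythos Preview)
Your overall architecture is right: reduce to Theorem~\ref{T:continuous_time_attractor} with $V=\myH1$, $H=\myH0$, use Corollary~\ref{cor:smoothing} to get the closed absorbing set $B\subset\myH1$, use Lemma~\ref{lemma95} for the smoothing property $\|U(t,t-\tau_0)v_1-U(t,t-\tau_0)v_2\|_{\myH1}\le K\|v_1-v_2\|_{\myH0}$, and Lemma~\ref{lemma94} for~\eqref{H:back_continuous_dependence}. That part matches the paper.

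The genuine gap is your handling of~\eqref{Hp:gMq} and of the H\"older conditions~\eqref{H:continuity_forcing},~\eqref{H:time_continuity}. These must be verified in the $\myH1$ norm, not in $\myH0$, and Lemma~\ref{lemma92} only gives $\|\vect z(t)-\vect z_0\|_{\myH0}\le C\sqrt{t-\tau}$. The paper does \emph{not} use Aubin--Lions--Simon here; compactness in the abstract scheme already comes from the embedding $\myH1\hookrightarrow\myH0$. Instead, one needs a uniform bound in a space strictly better than $\myH1$, so that interpolation upgrades the $\myH0$ H\"older estimate to an $\myH1$ H\"older estimate. For $\psi$ this is straightforward (a uniform $\Hs[3]$ bound, then interpolate $\Hs[2]$ between $\Hs$ and $\Hs[3]$). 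For the velocity it is not: one needs $\vect u(t)$ bounded in $\Hsvect[(2q-2)/q]_{0,\divfree}$ with $(2q-2)/q>1$, and this is exactly where~\eqref{Hp:gMq} enters, via the Giga--Sohr maximal $\Lp[q]$ regularity for the linear Stokes problem $\partial_t\vect u-\nu\mathbb P\Delta\vect u=\vect h$, after checking that the remaining right-hand side $\vect h=-\mathbb P(\vect u\cdot\nabla)\vect u+\mathbb P(\mu\nabla\psi)+\vect g$ lies in $\Lp[q]_{\mathrm{uloc}}(\Lpvect)$ on the absorbing set. This is Lemma~\ref{lemma:q} in the paper, and without it you cannot close~\eqref{H:continuity_forcing} and~\eqref{H:time_continuity} in $\myH1$. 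Your proposal misplaces the role of~\eqref{Hp:gMq} and omits this key step.
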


\begin{corollary}\label{cor:process_result}
    Under the same assumptions of Theorem~\ref{T:main_result}, if, moreover, $\vect{g}$ satisfies~\eqref{Hp:gMq} uniformly for $t_{0} \in \mathbb{R}$ (as in~\eqref{Hp:gM2}), then the process $U_{\vect{g}}(t, \tau) \colon \myH1 \to \myH1$ has a family $\widetilde{\mathcal{M}}_{U_{\vect{g}}} = \{ \widetilde{\mathcal{M}}_{U_{\vect g}}(t) \colon t \in \mathbb R \}$ of nonempty compact subsets of $\myH1$, which is a pullback exponential attractor for system~\eqref{E:PDE} (see Theorem~\ref{T:process_attractor} below) in the topology of~$\myH1$.
\end{corollary}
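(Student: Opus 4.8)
The plan is to derive Corollary~\ref{cor:process_result} by repeating, almost verbatim, the proof of Theorem~\ref{T:main_result}, with the single change that the $\mathbb R$-indexed abstract result (Theorem~\ref{T:process_attractor}) is invoked in place of its half-line counterpart (Theorem~\ref{T:continuous_time_attractor}). Indeed, when the hypotheses of the abstract theory are checked in Section~\ref{S:validation} for the process generated by~\eqref{E:PDE}, the instant $t_0$ enters only through the forcing moments $M_{\vect g}(t)$ and $M_{\vect g,q}(t)$. The extra assumption that~\eqref{Hp:gMq} holds uniformly for $t_0\in\mathbb R$, exactly as~\eqref{Hp:gM2} does, gives
\[
\sup_{t\in\mathbb R} M_{\vect g}(t)<\infty,\qquad \sup_{t\in\mathbb R} M_{\vect g,q}(t)<\infty,
\]
so that every estimate previously obtained with a constant depending on $M_{\vect g}(t)$ or $M_{\vect g,q}(t)$ now holds with a constant independent of $t$.

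Concretely, the steps are: (i) revisit the pullback dissipative estimates of Section~\ref{S:Existence} and the higher order estimates of Section~\ref{S:higher_order} and note that, under the uniform-in-$t_0$ bound on $\vect g$, they furnish a family $\{B(t):t\in\mathbb R\}$ of bounded subsets of $\myH1$ whose radius is bounded independently of $t$ and which is pullback absorbing for $U_{\vect g}(t,\tau)$; (ii) recall the smoothing estimate from $\myH0$ into $\myH1$ on such absorbing sets from Section~\ref{S:higher_order}, again with constants uniform in time; (iii) recall from Sections~\ref{S:continuous dependence} and~\ref{S:time regularity} the H\"older continuity of $U_{\vect g}(t,\tau)$ both with respect to the initial datum and jointly in $(t,\tau)$, and verify that the associated moduli depend on the time variables only through the length of the interval considered; (iv) feed (i)--(iii) into Theorem~\ref{T:process_attractor} to obtain the family $\widetilde{\mathcal M}_{U_{\vect g}}=\{\widetilde{\mathcal M}_{U_{\vect g}}(t):t\in\mathbb R\}$ of nonempty compact subsets of $\myH1$, with the exponential pullback attraction property and a bound on the fractal dimension uniform in $t$.

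The difficulty is purely one of bookkeeping: one must make sure that none of the estimates from Sections~\ref{S:Existence}--\ref{S:time regularity} degenerates as $t\to\pm\infty$. This is exactly what the uniform version of~\eqref{Hp:gMq} ensures, and it suffices because all those bounds have the structure ``a fixed power of the norms of the data, plus a monotone function of $M_{\vect g}(t)$ and $M_{\vect g,q}(t)$'': replacing the moments by their suprema over $\mathbb R$ yields the desired uniformity, while the powers of the data norms --- uniform in the growth exponent $p$ of $F$ by Section~\ref{S:higher_order} --- are left untouched. With this observation the verification of the hypotheses of Theorem~\ref{T:process_attractor} is identical to the one performed for Theorem~\ref{T:main_result}, and the assertion follows.
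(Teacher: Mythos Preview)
Your proposal is correct and follows the same route as the paper: observe that the uniform-in-$t_0$ version of~\eqref{Hp:gMq} renders every constant in Sections~\ref{S:Existence}--\ref{S:time regularity} independent of $t$, so the verification carried out for Theorem~\ref{T:main_result} goes through for all times and Theorem~\ref{T:process_attractor} applies. The paper's own argument is the one-line remark at the end of Section~\ref{S:validation}; the only ingredient it singles out explicitly, and which you leave implicit in your step~(iii), is that the extra hypothesis~\eqref{H:forward_continuous_dependence} required by Theorem~\ref{T:process_attractor} (Lipschitz dependence for $t>t_0$) is supplied by Lemma~\ref{lemma94}.
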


\section{Exponential pullback attractors}\label{S:theory}

In this section we briefly review the theory of exponential pullback attractors as developed in~\cite{Langa2010}. Below, $(H, | \cdot |)$ and $(V, \norm{}{\cdot})$ will be two Banach spaces such that $V$ is compactly embedded in $H$. Both spaces have a metric structure therefore, given any two nonempty subsets $D_{1}$, $D_{2}$ of the metric space $X=H,V$, the Hausdorff semidistance and distance are well defined respectively as
\begin{equation*}
    \dst_{X}(D_{1}, D_{2}) \eqdef \sup_{v_{1} \in D_{1}} \inf_{v_{2} \in D_{2}} \norm{X}{v_{1} - v_{2}}
\end{equation*}
and
\begin{equation*}
    \dst^{\sym}_{X}(D_{1}, D_{2}) \eqdef \max \{ \dst_{X}(D_{1}, D_{2}), \dst_{X}(D_{2}, D_{1}) \}.
\end{equation*}

As in the usual case of exponential attractors~\cite{Efendiev2000} the key point of the argument is the introduction of a set of mappings which enjoy a suitable ``smoothing property''. This is responsible for the exponential convergence of the trajectories of the system to an exponentially attracting finite-dimensional compact set (an exponential attractor) as soon as the trajectories have entered a sufficiently small neighbourhood of the attractor itself. Let $\delta, K \in \mathbb{R}$ be positive constants and let $B$ be a bounded and closed subset of $V$. Define $\mathbb{S}_{\delta, K}(B)$ to be the class of mappings $S \colon V \to V$ such that the smoothing property holds on a $\delta$-neighbourhood (in $V$) of $B$, i.e.
\begin{equation*}
    S(\mathcal{O}_{\delta}(B)) \subset B
\end{equation*}
and
\begin{equation*}
    \norm{}{S v_{1} - S v_{2}} \leqslant K |v_{1} - v_{2}| \qquad \text{for all $v_{1}, v_{2} \in \mathcal{O}_{\delta}(B)$},
\end{equation*}
where $\mathcal{O}_{\delta}(B) \eqdef \{ v \in V \mid \inf_{w \in B} \norm{}{v-w} < \delta \}$ is a $\delta$-neighbourhood of the set $B$ in $V$.

We introduce a suitable class of family of mappings, which are the abstract, discrete-time, dynamical system representation of the evolution equations we will be interested in. In particular, let $n_{0} \in \mathbb{Z}$ be fixed and consider the class $\mathcal{U}_{d}(V, n_{0})$ of all families $U = \{ U(m,n) \mid n, m \in \mathbb{Z}, n \leqslant m \leqslant n_{0}\}$ of mappings $U(m,n) \colon V \to V$ such that
\begin{enumerate}
    \item $U(n,n) = Id$ for all $n \leqslant n_{0}$;
    \item $U(m,k) U(k,n) = U(m,n)$ for any $n \leqslant k \leqslant m \leqslant n_{0}$.
\end{enumerate}
When dealing with pullback attractors, only the evolution of the system up to the ``present'' time $n_{0}$ is of interest. The key question is how perturbations of the system in the past affect the present dynamic and what actually is the state of the system observed. This is the reason why elements belonging to the class $\mathcal{U}_{d}(V, n_{0})$ are defined up to time $n_{0}$ and not necessarily beyond.

We can say that element of the discrete time class $\mathcal{U}_{d}(V, \tau_{0})$ possess a discrete time pullback exponential attractor in the sense made precise by the following theorem
\begin{theorem}[{\cite[Theorem~2.1]{Langa2010}}]\label{T:discrete_time_attractor}
    Let $n_{0} \in \mathbb{Z}$, $\delta > 0$, $K > 0$ and $B \subset V$ be fixed with $B$ bounded and closed in $V$. Then, there exist positive constants $\Const_{1}$, $\Const_{2}$, $\overline{\epsilon}$ and $\alpha$ only depending on $V$, $H$, $\delta$, $K$ and $B$, such that, for each $U \in \mathcal{U}_{d}(V, n_{0})$ satisfying
    \begin{equation*}
        U(n, n-1) \in \mathbb{S}_{\delta, K}(B) \qquad \text{for all $n \leqslant n_{0}$},
    \end{equation*}
    there exists a family $\mathcal{M}_{U} = \{ \mathcal{M}_{U}(n) \mid n \leqslant n_{0} \}$, of nonempty subsets of $V$, which satisfies
    \begin{enumerate}[a)]
        \item $\mathcal{M}_{U}$ is positively invariant i.e.
            \begin{equation*}
                U(m,n) \mathcal{M}_{U}(n) \subset \mathcal{M}_{U}(m) \quad \text{for all $n \leqslant m \leqslant n_{0}$,}
            \end{equation*}
        \item $\mathcal{M}_{U}(n) \subset B$ is a compact subset of $V$, with finite fractal dimension estimated by
            \begin{equation*}
                \log_{2} N_{\epsilon}(\mathcal{M}_{U}(n), V) \leqslant \Const_{1} \log_{2} \frac{1}{\epsilon} + \Const_{2} \quad \text{for all $0 < \epsilon < \overline{\epsilon}$ and any $n \leqslant n_{0}$,}
            \end{equation*}
            where $N_{\epsilon}(\mathcal{M}_{U}(n), V)$ is the minimal number of $\epsilon$-balls in $V$, which are necessary to cover $\mathcal{M}_{U}(n)$,
        \item $\mathcal{M}_{U}$ attracts $B$ exponentially in a pullback sense i.e.
            \begin{equation*}
                \dst_{V}(U(m,n)B, \mathcal{M}_{U}(m)) \leqslant \Const_{1} e^{-\alpha (m-n)} \qquad \text{for all $n \leqslant m \leqslant n_{0}$,}
            \end{equation*}
        \item for every integer $k \leqslant 0$
            \begin{equation*}
                \mathcal{M}_{U}(n+k) = \mathcal{M}_{T_{k}U}(n) \qquad \text{for all $n \leqslant n_{0}$},
            \end{equation*}
            where $T_{k}U(m,n) \eqdef U(m+k, n+k)$.
    \end{enumerate}
\end{theorem}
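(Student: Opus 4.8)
The plan is to transport to this discrete, non-autonomous, backward setting the classical construction of an exponential attractor via a smoothing property between the two spaces $H$ and $V$, in the spirit of~\cite{Efendiev2000}. Write $S_{n}\eqdef U(n,n-1)$, so that $U(m,n)=S_{m}\circ\cdots\circ S_{n+1}$ for $n<m\le n_{0}$ and $U(n,n)=Id$. By hypothesis $U(n,n-1)\in\mathbb{S}_{\delta,K}(B)$, i.e.\ $S_{n}(\mathcal{O}_{\delta}(B))\subset B$ and $\|S_{n}v_{1}-S_{n}v_{2}\|\le K|v_{1}-v_{2}|$ for $v_{1},v_{2}\in\mathcal{O}_{\delta}(B)$; since $V\hookrightarrow H$ continuously, the latter inequality makes each $S_{n}$ Lipschitz continuous from $V$ into $V$ on $\mathcal{O}_{\delta}(B)$. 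Because $B\subset\mathcal{O}_{\delta}(B)$ and $S_{n}(\mathcal{O}_{\delta}(B))\subset B$, an easy induction gives $U(n,n-j)B\subset B$ for every $j\ge0$, and from $U(n,n-j-1)B=U(n,n-j)\big(S_{n-j}B\big)\subset U(n,n-j)B$ the sets $\{U(n,n-j)B\}_{j\ge0}$ are nonincreasing in $j$, for each fixed $n$. This monotonicity is the pullback substitute for the nesting of the iterated images of an absorbing set in the autonomous theory, and it is what will make the ``tail'' of the attractor finite-dimensional.

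First I would construct, for each $n\le n_{0}$ and each $j\ge1$, a finite set $E^{n}_{j}\subset B$ such that the $V$-balls of radius $\rho_{j}\eqdef\delta\,2^{-j}$ centred at the points of $E^{n}_{j}$ cover $U(n,n-j)B$, with $\#E^{n}_{j}\le\Const\,N_{*}^{j}$ and $S_{n}(E^{n-1}_{j})\subset E^{n}_{j+1}$; here $N_{*}$ denotes the number of $H$-balls of radius $r/(2K)$ needed to cover a $V$-ball of radius $r$, which is finite by the compactness of $V\hookrightarrow H$ and independent of $r$, hence depends only on $H$, $V$, $K$. One never covers $B$ itself in $V$ --- $B$ need not be precompact there --- but only the already smoothed images $U(n,n-j)B=S_{n}\big(U(n-1,n-j)B\big)$. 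The step is routine: $E^{n}_{1}$ is the $S_{n}$-image of a (finite) $H$-cover of $B$ of radius $\delta/(2K)$, which exists because the $V$-bounded set $B$ is $H$-precompact; and, given $E^{n-1}_{j}$, one covers each ball $\mathcal{O}_{\rho_{j}}(\{e\})$, $e\in E^{n-1}_{j}$, by $N_{*}$ balls of $H$-radius $\rho_{j}/(2K)$ with centres in $B$, adjoins $e$, applies $S_{n}$, and uses the smoothing inequality to turn these into $V$-balls of radius $\rho_{j+1}$ covering $U(n,n-j-1)B$; together with the points $S_{n}(e)$ these centres form $E^{n}_{j+1}$, which satisfies the required bounds. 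Then I set $\mathcal{M}_{U}(n)\eqdef\overline{\bigcup_{j\ge1}E^{n}_{j}}\subset B$, the closure taken in $V$; this set is nonempty since it contains $E^{n}_{1}$.

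Then I would check the four assertions. (a) Since $S_{n}$ is continuous on $\mathcal{O}_{\delta}(B)\supset B$ and $S_{n}(E^{n-1}_{j})\subset E^{n}_{j+1}$, one gets $S_{n}(\mathcal{M}_{U}(n-1))\subset\mathcal{M}_{U}(n)$, and composing yields $U(m,n)\mathcal{M}_{U}(n)\subset\mathcal{M}_{U}(m)$. (c) For $m>n$, the set $U(m,n)B=U(m,m-(m-n))B$ lies, in $V$, within $\rho_{m-n}=\delta\,e^{-(m-n)\ln2}$ of $E^{m}_{m-n}\subset\mathcal{M}_{U}(m)$, whence $\dst_{V}(U(m,n)B,\mathcal{M}_{U}(m))\le\Const\,e^{-\alpha(m-n)}$ with $\alpha=\ln2$ (for $m=n$ the bound is absorbed into the prefactor, as $\mathcal{M}_{U}(n)\subset B$ is $V$-bounded). (b) For $0<\epsilon<\overline{\epsilon}\eqdef\rho_{1}$ pick $k\ge1$ with $\rho_{k+1}<\epsilon\le\rho_{k}$: the finite part $\bigcup_{1\le j\le k}E^{n}_{j}$ has at most $\Const\,N_{*}^{k}$ points, and for $j>k$ the monotonicity $U(n,n-j)B\subset U(n,n-k)B$ and the net property of $E^{n}_{k}$ force each point of $E^{n}_{j}$ to lie within $2\rho_{k}$ of $E^{n}_{k}$, so that $\overline{\bigcup_{j>k}E^{n}_{j}}$ is covered by $\#E^{n}_{k}\le\Const\,N_{*}^{k}$ balls of radius $3\rho_{k}$; since covering these by $\epsilon$-balls costs only a factor depending on $H$, $V$, and $k\le\log_{2}(\delta/\epsilon)$, one obtains $N_{\epsilon}(\mathcal{M}_{U}(n),V)\le\Const\,N_{*}^{k}\le\Const\,(1/\epsilon)^{\log_{2}N_{*}}$, which is the claimed dimension bound with $\Const_{1}=\log_{2}N_{*}$; in particular $\mathcal{M}_{U}(n)$ is totally bounded, hence, being closed in the Banach space $V$, compact. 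All the constants thus produced depend only on $H$, $V$, $\delta$, $K$, $B$. (d) The construction above is a deterministic function of the one-step maps $(S_{i})_{i\le n}$ (once a fixed $H$-cover is chosen for each bounded set and radius); since the one-step maps of $T_{k}U$ are $(T_{k}U)(i,i-1)=U(i+k,i-1+k)=S_{i+k}$, the set $\mathcal{M}_{T_{k}U}(n)$ is built from $(S_{i+k})_{i\le n}=(S_{i})_{i\le n+k}$, i.e.\ from precisely the data defining $\mathcal{M}_{U}(n+k)$, so $\mathcal{M}_{U}(n+k)=\mathcal{M}_{T_{k}U}(n)$ for every $k\le0$.

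The hard part will be the bookkeeping in the net construction --- keeping $\#E^{n}_{j}$ growing only geometrically along a time-dependent iteration. This hinges on two facts: the smoothing property, which trades an $H$-covering for a $V$-covering at the uniform cost $N_{*}$ per step (this is where the compact embedding $V\hookrightarrow H$ is used essentially), and the pullback monotonicity $U(n,n-j-1)B\subset U(n,n-j)B$, which controls the tail $\bigcup_{j>k}E^{n}_{j}$ and thereby the fractal dimension. Once the nets are in place, positive invariance, exponential attraction and the cocycle identity follow formally.
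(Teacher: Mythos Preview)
The paper does not prove this theorem at all: it is quoted verbatim from \cite[Theorem~2.1]{Langa2010} and used as a black box in Section~\ref{S:theory}, so there is no ``paper's own proof'' to compare against. Your sketch is essentially the standard Efendiev--Miranville--Zelik construction adapted to the non-autonomous discrete setting, and it is the right strategy; the pullback monotonicity $U(n,n-j-1)B\subset U(n,n-j)B$ that you isolate is exactly the ingredient that replaces the forward nesting of iterates in the autonomous case.

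There is one genuine slip in your dimension argument (b). You write that the tail $\overline{\bigcup_{j>k}E^{n}_{j}}$ is covered by $\#E^{n}_{k}$ many $V$-balls of radius $3\rho_{k}$, and then that ``covering these by $\epsilon$-balls costs only a factor depending on $H$, $V$''. In an infinite-dimensional Banach space $V$ a ball of radius $3\rho_{k}$ cannot be covered by finitely many balls of radius $\epsilon\le\rho_{k}$, so that step as written is false. The fix is immediate and you have all the pieces for it: since $\rho_{k+1}<\epsilon\le\rho_{k}$ one has $\rho_{k}<2\epsilon$, so your cover of the tail is already a cover by balls of radius at most $6\epsilon$. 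Together with the $\le\Const\,N_{*}^{k}$ singletons from $\bigcup_{j\le k}E^{n}_{j}$ this gives $N_{6\epsilon}(\mathcal{M}_{U}(n),V)\le\Const\,N_{*}^{k}$, and since $k\le\log_{2}(\delta/\epsilon)+1$ the bound $\log_{2}N_{\epsilon}\le\Const_{1}\log_{2}(1/\epsilon)+\Const_{2}$ follows with $\Const_{1}=\log_{2}N_{*}$ after absorbing the factor $6$ into $\Const_{2}$. With this correction your sketch is sound.
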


\begin{remark}
    The results in~\cite{Langa2010} also include robustness of a discrete-time exponential pullback attractor w.r.t.\ to a suitable metric in the space of discrete time processes. For simplicity we do not mention all the pertinent details here. However, we recall that this result is important in deducing the analogue continuous-time theory and, in particular, in obtaining continuity in time of an exponential pullback attractor.
\end{remark}

Having in mind the more relevant continuous-time setting, we now introduce a suitable analogue of the class $\mathcal{U}_{d}(V, n_{0})$. Let $t_{0} \in \mathbb{R}$ be any time, and consider the class $\mathcal{U}(V, t_{0})$ of all families $U = \{ U(t,s) \mid s,t \in \mathbb{R}, s \leqslant t \leqslant t_{0} \}$ of mappings $U(t,s) \colon V \to V$ such that
\begin{enumerate}
    \item $U(s,s) = Id$ for all $s \leqslant t_{0}$;
    \item $U(t,r) U(r,s) = U(t,s)$ for any $s \leqslant r \leqslant t \leqslant t_{0}$.
\end{enumerate}
In this setting, a natural way to introduce a smoothing property is to consider only those families $U \in \mathcal{U}(V, t_{0})$ such that there exists a positive time span $\tau_{0}$ for which
\begin{equation}\label{E:smoothing_class}
    U(t, t-\tau_{0}) \in \mathbb{S}_{\delta, K}(B)
\end{equation}
holds for all $t \leqslant t_{0}$. Thanks to Theorem~\ref{T:discrete_time_attractor}, for any $t \leqslant t_{0}$ the family $U^{t} \in \mathcal{U}_{d}(V,0)$ given by
\begin{equation*}
    U^{t}(m,n) \eqdef U(t + m \tau_{0}, t + n\tau_{0}) \quad \text{for all $n \leqslant m \leqslant 0$}
\end{equation*}
possesses a discrete time exponential pullback attractor.

In order to obtain a satisfactory dynamical description of the system also in the continuous-time case, we will need some additional assumptions on the time regularity and continuous dependence of the family $U \in \mathcal{U}(V, t_{0})$. In particular we will assume
\begin{hypothesis_bis}
    \item \label{H:continuity_forcing} Continuity w.r.t.\ the forcing terms: there exist real positive constants $\Const_{0}$, $\epsilon_{0}$ and $\gamma$ such that $\epsilon_{0} \leqslant \tau_{0}$ and that for all $t \leqslant t_{0}$, $\tau_{0} \leqslant r \leqslant 2 \tau_{0}$, $0 \leqslant s \leqslant \epsilon_{0}$ and $v \in \mathcal{O}_{\delta}(B)$
        \begin{equation*}
            \norm{}{U(t, t-r)v - U(t-s, t-r-s)v} \leqslant \Const_{0} |s|^{\gamma}.
        \end{equation*}
    \item \label{H:back_continuous_dependence} Past continuous dependence on initial data: there exists a positive constant $\Const_{B}$ such that
        \begin{equation*}
            \norm{}{U(t, t-s)v - U(t, t-s)w} \leqslant \Const_{B} \norm{}{v-w}
        \end{equation*}
        for all $v, w \in B$ and any $t \leqslant t_{0}$, $0 \leqslant s \leqslant 2 \tau_{0}$.
    \item \label{H:time_continuity} Time continuity of solutions: there exist positive constants $\Const'_{0}$ and $\gamma'$ such that for all $t \leqslant t_{0}$, $\tau_{0} \leqslant r \leqslant 2 \tau_{0}$, $0 \leqslant s \leqslant \epsilon_{0}$ and $v \in B$
        \begin{equation*}
            \norm{}{U(t, t-r)v - U(t-s, t-r)v} \leqslant \Const'_{0} |s|^{\gamma^{'}}.
        \end{equation*}
\end{hypothesis_bis}

We can now state the main result on exponential pullback attractors
\begin{theorem}[{\cite[Theorem~2.2]{Langa2010}}]\label{T:continuous_time_attractor}
    If $U \in \mathcal{U}(V, t_{0})$ satisfies~\eqref{E:smoothing_class} and Assumption~\eqref{H:back_continuous_dependence}, with $B \subset V$ bounded and closed in $V$, then the family $\mathcal{M}_{U} = \{ \mathcal{M}_{U}(t) \mid t \leqslant t_{0} \}$, defined by
    \begin{equation*}
        \mathcal{M}_{U}(t) \eqdef \bigcup_{s \in [0, \tau_{0}]} U(t, t-s-\tau_{0}) \mathcal{M}_{U^{t-s-\tau_{0}}}(0) \qquad \text{for all $t \leqslant t_{0}$},
    \end{equation*}
    satisfies
    \begin{enumerate}[a)]
        \item $U(t, \tau) \mathcal{M}_{U}(\tau) \subset \mathcal{M}_{U}(t)$ for all $\tau \leqslant t \leqslant t_{0}$,
        \item $\mathcal{M}_{T_{-\tau}U}(t) = \mathcal{M}_{U}(t-\tau)$ for all $\tau \geqslant 0$ and any $t \leqslant t_{0}$, where $T_{-\tau} U(t,s) \eqdef U(t-\tau, s-\tau)$,
        \item for all $\tau \geqslant 0$ and any $t \leqslant t_{0}$
            \begin{equation*}
                \dst_{V}(U(t, t-\tau)B, \mathcal{M}_{U}(t)) \leqslant \Const e^{-\tilde{\alpha}\tau},
            \end{equation*}
        \item if, for any $D \subset V$ bounded, there exists a time $s_{D} \geqslant 0$ such that
            \begin{equation*}
                U(t, t-s_{D})D \subset B \qquad \text{for all $t \leqslant t_{0}$}
            \end{equation*}
            then
            \begin{equation}\label{E:exponential_decay}
                \dst_{V}(U(t, t-\tau)D, \mathcal{M}_{U}(t)) \leqslant \Const e^{\tilde{\alpha} s_{D}} e^{- \tilde{\alpha} \tau} \qquad \text{for all $\tau \geqslant s_{D}$ and any $t \leqslant t_{0}$.}
            \end{equation}
        \setcounter{enumi_tmp}{\value{enumi}}
    \end{enumerate}
    If, moreover, $U$ also satisfies Assumptions~\eqref{H:continuity_forcing} and~\eqref{H:time_continuity}, then
    \begin{enumerate}[a)]
        \setcounter{enumi}{\value{enumi_tmp}}
        \item $\mathcal{M}_{U}(t)$ is a compact subset of $V$, with finite fractal dimension, for all $t \leqslant t_{0}$,
        \item for all $0 \leqslant r \leqslant \epsilon_{0}$ and any $t \leqslant t_{0}$,
            \begin{equation*}
                \dst^{\sym}_{V}(\mathcal{M}_{U}(t), \mathcal{M}_{U}(t-r)) \leqslant \Const |r|^{\tilde{\gamma}}.
            \end{equation*}
    \end{enumerate}
\end{theorem}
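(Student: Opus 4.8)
This is the continuous-time counterpart of Theorem~\ref{T:discrete_time_attractor}, and the plan is to \emph{bootstrap} from it. Fix a present time $t\leqslant t_0$. The sampled family $U^{t}$, with $U^t(m,n)=U(t+m\tau_0,t+n\tau_0)$, lies in $\mathcal U_d(V,0)$ and by~\eqref{E:smoothing_class} satisfies $U^{t}(n,n-1)\in\mathbb S_{\delta,K}(B)$ for all $n\leqslant 0$; hence Theorem~\ref{T:discrete_time_attractor} supplies a discrete exponential pullback attractor $\{\mathcal M_{U^{t}}(n)\}_{n\leqslant 0}$ with constants $\Const_1,\Const_2,\overline\epsilon,\alpha$ independent of $t$ (they depend only on $V,H,\delta,K,B$). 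The continuous-time object is then obtained by ``filling the gaps'' between sampling times through the interpolation formula of the statement, each evolution map $U(t,t-s-\tau_0)$ being evaluated on a time window of length in $[\tau_0,2\tau_0]$ — the range covered by Assumption~\eqref{H:back_continuous_dependence}.

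Conclusions (a)--(d) are the ``soft'' part and follow by manipulating the cocycle identity $U(t,r)U(r,s)=U(t,s)$ together with the discrete conclusions. For (a) one expands $\mathcal M_U(\tau)$, pushes $U(t,\tau)$ inside the union, applies the cocycle identity and the elementary relation $\mathcal M_{U^{\sigma-k\tau_0}}(0)=\mathcal M_{U^{\sigma}}(-k)$ (a consequence of item (d) of Theorem~\ref{T:discrete_time_attractor}), and then uses the discrete positive invariance to reabsorb each slice into $\mathcal M_U(t)$; (b) is immediate from the identity $(T_{-\tau}U)^{\sigma}=U^{\sigma-\tau}$ and the definition of $\mathcal M_U$. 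For the pullback attraction (c), when $\tau\geqslant\tau_0$ I would write $\tau=\tau_0+\sigma+m\tau_0$ with $\sigma\in[0,\tau_0)$ and $m\in\mathbb N$, factor $U(t,t-\tau)=U(t,t-\sigma-\tau_0)\,U^{t-\sigma-\tau_0}(0,-m)$, apply the discrete exponential attraction to the inner factor (noting $U^{t-\sigma-\tau_0}(0,-m)B\subset B$ by~\eqref{E:smoothing_class}) and the Lipschitz bound of~\eqref{H:back_continuous_dependence} to the outer one; since $U(t,t-\sigma-\tau_0)\mathcal M_{U^{t-\sigma-\tau_0}}(0)\subset\mathcal M_U(t)$ by construction, this gives $\dst_V(U(t,t-\tau)B,\mathcal M_U(t))\leqslant\Const e^{-\tilde\alpha\tau}$ with $\tilde\alpha=\alpha/\tau_0$, the range $\tau<\tau_0$ being trivial by boundedness. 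Item (d) follows from (c) by absorbing $D$ into $B$ within time $s_D$ and restarting the estimate, at the cost of the factor $e^{\tilde\alpha s_D}$.

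The substantive part is the second block, (e) and (f), where Assumptions~\eqref{H:continuity_forcing} and~\eqref{H:time_continuity} — together with the robustness of the discrete exponential pullback attractor alluded to in the remark after Theorem~\ref{T:discrete_time_attractor} — come into play. Each slice $U(t,t-s-\tau_0)\mathcal M_{U^{t-s-\tau_0}}(0)$ is compact in $V$, being the image under a map Lipschitz on $B$ (by~\eqref{H:back_continuous_dependence}) of a set compact in $V$ (by Theorem~\ref{T:discrete_time_attractor}). To see that the union over the compact parameter interval $[0,\tau_0]$ remains precompact, I would estimate the Hausdorff modulus of continuity of $s\mapsto U(t,t-s-\tau_0)\mathcal M_{U^{t-s-\tau_0}}(0)$, combining the Hölder displacement bounds $\Const_0|s|^{\gamma}$ and $\Const'_0|s|^{\gamma'}$ of~\eqref{H:continuity_forcing}--\eqref{H:time_continuity} for the evolution maps with the continuity of $\sigma\mapsto\mathcal M_{U^{\sigma}}(0)$ provided by robustness. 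The fractal-dimension estimate is then a covering count: splitting $[0,\tau_0]$ into $\sim\epsilon^{-1/\gamma}$ subintervals over which the corresponding slices oscillate by at most $\epsilon$, and covering each slice by $\lesssim\epsilon^{-\Const_1}$ balls of $V$ (by the discrete dimension bound), yields $\log_2 N_\epsilon(\mathcal M_U(t),V)\leqslant(\Const_1+\tfrac{1}{\gamma})\log_2\tfrac{1}{\epsilon}+\Const$, uniformly in $t\leqslant t_0$. For (f) one rewrites, using (b), $\mathcal M_U(t-r)=\mathcal M_{T_{-r}U}(t)$, so $\dst^{\sym}_V(\mathcal M_U(t),\mathcal M_U(t-r))$ compares the pullback attractors of $U$ and of its time-translate $T_{-r}U$; since these two processes differ on windows of length $\leqslant 2\tau_0$ by exactly the quantities controlled in~\eqref{H:continuity_forcing} and~\eqref{H:time_continuity}, propagating these Hölder bounds through the (robust) discrete construction and the interpolation formula gives $\dst^{\sym}_V(\mathcal M_U(t),\mathcal M_U(t-r))\leqslant\Const|r|^{\tilde\gamma}$ with $\tilde\gamma=\min\{\gamma,\gamma'\}$.

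The main obstacle I anticipate is precisely this last block: keeping the Hölder dependence quantitative and uniform in $t$ while routing it through the robustness of the discrete exponential pullback attractor — a tool the remark flags as essential but leaves implicit — and matching the analytic moduli of continuity of the maps $U$ (Assumptions~\eqref{H:continuity_forcing} and~\eqref{H:time_continuity}) with the attractor-valued modulus of $\sigma\mapsto\mathcal M_{U^{\sigma}}$. By contrast, conclusions (a)--(d) amount to a re-indexing exercise built on the cocycle property and the discrete theorem.
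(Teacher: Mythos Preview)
The paper does not prove this theorem: it is quoted verbatim as \cite[Theorem~2.2]{Langa2010} and used as a black box in Section~\ref{S:theory}, with no proof or sketch provided. There is therefore no ``paper's own proof'' to compare your proposal against. Your outline --- bootstrapping from the discrete-time Theorem~\ref{T:discrete_time_attractor} via the sampled families $U^{t}$, filling the gaps with the interpolation formula, and invoking the (unstated) robustness result from \cite{Langa2010} for items (e) and (f) --- is a plausible reconstruction of how the argument in \cite{Langa2010} presumably goes, but since the present paper simply imports the result, any detailed comparison would have to be against that reference rather than against this paper.
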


Finally consider the case of processes on $V$. Let $U$ be a family $U = \{ U(t,s) \mid s, t \in \mathbb{R}, s \leqslant t \}$ of mappings $U(t,s) \colon V \to V$ such that
\begin{enumerate}
    \item $U(s,s) = Id$ for all $s \in \mathbb{R}$;
    \item $U(t,r) U(r,s) = U(t,s)$ for any $s \leqslant r \leqslant t$.
\end{enumerate}
This corresponds to a dynamical system defined not only up to the present time~$t_{0}$, but also for positive times. Considering processes corresponds to investigating what the eventual fate of the system under scrutiny will be. Therefore, it is interesting to investigate the relation between this eventual fate and the present state of the system, which is itself, in a way of speaking, the outcome of an arbitrary long evolution.

We will need the following additional assumption, which is a slight modification of~\eqref{H:back_continuous_dependence}
\begin{hypothesis_bis}
    \item \label{H:forward_continuous_dependence} Future continuous dependence on initial data: for any $t > t_{0}$ and $D_{1}$, $D_{2}$ bounded subsets of $V$, there exists a positive constant $L(t, D_{1}, D_{2})$ such that
        \begin{equation*}
            \norm{}{U(t, t_{0})v - U(t, t_{0})w} \leqslant L(t, D_{1}, D_{2}) \norm{}{v-w} \qquad \text{for all $v \in D_{1}$, $w \in D_{2}$}.
        \end{equation*}
\end{hypothesis_bis}

\begin{theorem}[{\cite[Theorem~2.3]{Langa2010}}]\label{T:process_attractor}
    Assume that $U$ is a process on $V$ and, for some $t_{0} \in \mathbb{R}$, the subfamily of $U$ given by the operators $U(t,s)$ when $s \leqslant t \leqslant t_{0}$ satisfies~\eqref{E:smoothing_class} and Assumption~\eqref{H:back_continuous_dependence}, with $B \subset V$ bounded and closed in $V$. Under these assumptions, the family $\widetilde{\mathcal{M}}_{U} = \{ \widetilde{\mathcal{M}}_{U}(t) \mid t \in \mathbb{R} \}$ defined by
    \begin{equation*}
        \widetilde{\mathcal{M}}_{U}(t) =
        \begin{cases}
            \mathcal{M}_{U}(t)                  &\text{if $t \leqslant t_{0}$,}\\
            U(t,t_{0})\mathcal{M}_{U}(t_{0})    &\text{if $t > t_{0}$,}
        \end{cases}
    \end{equation*}
    where $\mathcal{M}_{U}$ is the family given in Theorem~\ref{T:continuous_time_attractor}, satisfies:
    \begin{enumerate}[a)]
        \item $U(t, \tau) \widetilde{\mathcal{M}}_{U}(\tau) \subset \widetilde{\mathcal{M}}_{U}(t)$, for all $\tau \leqslant t$,
        \item $\widetilde{\mathcal{M}}_{T_{-\tau}U}(t) = \widetilde{\mathcal{M}}_{U}(t-\tau)$ for all $\tau \geqslant 0$ and any $t \leqslant t_{0}$ and
            \begin{equation*}
                \widetilde{\mathcal{M}}_{T_{-\tau}U}(t) \subset \widetilde{\mathcal{M}}_{U}(t-\tau) \qquad \text{for all $\tau \geqslant 0$ and any $t > t_{0}$},
            \end{equation*}
            where $T_{-\tau}U(t,s) \eqdef U(t-\tau, s-\tau)$.
        \setcounter{enumi_tmp}{\value{enumi}}
    \end{enumerate}
    If in addition~\eqref{H:forward_continuous_dependence} holds, then
    \begin{enumerate}[a)]
        \setcounter{enumi}{\value{enumi_tmp}}
        \item if, for any $D \subset V$ bounded, there exists a positive time $s_{D}$ such that
            \begin{equation*}
                U(t, t-s)D \subset B \qquad \text{for all $s \geqslant s_{D}$ and any $t \leqslant t_{0}$},
            \end{equation*}
            then $\widetilde{\mathcal{M}}_{U}$ satisfies~\eqref{E:exponential_decay} for all $t \leqslant t_{0}$ and
            \begin{equation*}
                \dst_{V}(U(t, t-\tau)D, \widetilde{\mathcal{M}}_{U}(t)) \leqslant \widetilde{L}(t, B, \widetilde{\mathcal{M}}_{U}(t_{0})) e^{\tilde{\alpha}(s_{D} + t - t_{0})}e^{-\tilde{\alpha} \tau}
            \end{equation*}
            for all $t > t_{0}$ and any $\tau \geqslant s_{D} + t - t_{0}$.
        \setcounter{enumi_tmp}{\value{enumi}}
    \end{enumerate}
    Moreover, if $U$ also satisfies Assumptions~\eqref{H:continuity_forcing} and~\eqref{H:time_continuity}, then
    \begin{enumerate}[a)]
        \setcounter{enumi}{\value{enumi_tmp}}
        \item $\widetilde{\mathcal{M}}_{U}(t)$ is a compact subset of $V$ with finite fractal dimension for all $t \in \mathbb{R}$,
        \item for all $0 \leqslant r \leqslant \epsilon_{0}$ and any $t \leqslant t_{0}$
            \begin{equation*}
                \dst^{\sym}_{V}(\widetilde{\mathcal{M}}_{U}(t), \widetilde{\mathcal{M}}_{U}(t-r)) \leqslant \Const |r|^{\tilde{\gamma}}.
            \end{equation*}
    \end{enumerate}
\end{theorem}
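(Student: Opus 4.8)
This is the process-case counterpart of Theorem~\ref{T:continuous_time_attractor}, and the plan is to bootstrap entirely from that result, which already furnishes the family $\mathcal{M}_{U}$ together with properties a)--f) on the time range $t \le t_{0}$. For $t \le t_{0}$ there is nothing new, since $\widetilde{\mathcal{M}}_{U}(t) = \mathcal{M}_{U}(t)$ by definition, so items a), b) (first part), c) and the regularity statements are inherited verbatim. The only genuine work concerns times $t > t_{0}$, where $\widetilde{\mathcal{M}}_{U}(t) = U(t,t_{0})\mathcal{M}_{U}(t_{0})$; the guiding principle is that every fact known at the reference time $t_{0}$ is transported forward by $U(\cdot,t_{0})$, which, by the cocycle identity, factors every evolution operator through $t_{0}$.

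First I would dispatch the invariance and translation identities by a short case analysis on the positions of the time arguments relative to $t_{0}$. For a) with $\tau \le t_{0} < t$, write $U(t,\tau) = U(t,t_{0})U(t_{0},\tau)$ and use $U(t_{0},\tau)\mathcal{M}_{U}(\tau) \subset \mathcal{M}_{U}(t_{0})$ (Theorem~\ref{T:continuous_time_attractor}a)) to obtain $U(t,\tau)\widetilde{\mathcal{M}}_{U}(\tau) \subset U(t,t_{0})\mathcal{M}_{U}(t_{0}) = \widetilde{\mathcal{M}}_{U}(t)$; for $t_{0} < \tau \le t$ one even has equality, since $U(t,\tau)\widetilde{\mathcal{M}}_{U}(\tau) = U(t,\tau)U(\tau,t_{0})\mathcal{M}_{U}(t_{0}) = U(t,t_{0})\mathcal{M}_{U}(t_{0})$. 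Property b) follows in the same spirit from the definitions of $\widetilde{\mathcal{M}}$ and $T_{-\tau}U$, the cocycle identity and Theorem~\ref{T:continuous_time_attractor}b); the reason only an inclusion survives for $t > t_{0}$ is that one must insert the invariance $U(t_{0},t_{0}-\tau)\mathcal{M}_{U}(t_{0}-\tau) \subset \mathcal{M}_{U}(t_{0})$, which is not an equality.

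Next, the exponential attraction c) for $t > t_{0}$. Assume the pullback-absorption hypothesis, so that $U(s,s-\sigma)D \subset B$ for all $\sigma \ge s_{D}$ and $s \le t_{0}$, and start from Theorem~\ref{T:continuous_time_attractor}d): $\dst_{V}(U(t_{0},t_{0}-\sigma)D,\mathcal{M}_{U}(t_{0})) \le \Const\, e^{\tilde{\alpha} s_{D}} e^{-\tilde{\alpha}\sigma}$ for $\sigma \ge s_{D}$. Given $t > t_{0}$ and $\tau \ge s_{D} + t - t_{0}$, set $\sigma \eqdef \tau - (t - t_{0}) \ge s_{D}$ and factor $U(t,t-\tau) = U(t,t_{0})U(t_{0},t_{0}-\sigma)$. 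Since $\sigma \ge s_{D}$, the set $U(t_{0},t_{0}-\sigma)D$ lies in $B$, so Assumption~\eqref{H:forward_continuous_dependence} applies with $D_{1} = B$ and $D_{2} = \mathcal{M}_{U}(t_{0})$ and provides a Lipschitz constant $L = L(t,B,\mathcal{M}_{U}(t_{0}))$ for $U(t,t_{0})$ on $B \cup \mathcal{M}_{U}(t_{0})$; since the Hausdorff semidistance is contracted by a Lipschitz map, $\dst_{V}(U(t,t-\tau)D,\widetilde{\mathcal{M}}_{U}(t)) \le L\,\dst_{V}(U(t_{0},t_{0}-\sigma)D,\mathcal{M}_{U}(t_{0})) \le L\,\Const\, e^{\tilde{\alpha} s_{D}} e^{-\tilde{\alpha}\sigma}$, and $e^{-\tilde{\alpha}\sigma} = e^{\tilde{\alpha}(t-t_{0})}e^{-\tilde{\alpha}\tau}$ produces exactly the stated bound with $\widetilde{L} = L\,\Const$. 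For the final items, $\widetilde{\mathcal{M}}_{U}(t) = U(t,t_{0})\mathcal{M}_{U}(t_{0})$ is the image of the compact, finite-fractal-dimensional set $\mathcal{M}_{U}(t_{0})$ (Theorem~\ref{T:continuous_time_attractor}e), valid under \eqref{H:continuity_forcing}--\eqref{H:time_continuity}) under $U(t,t_{0})$, which is Lipschitz on $\mathcal{M}_{U}(t_{0})$ by \eqref{H:forward_continuous_dependence} with $D_{1} = D_{2} = \mathcal{M}_{U}(t_{0})$; a Lipschitz image of such a set is again compact with no larger fractal dimension, and the time-H\"older estimate is claimed only for $t \le t_{0}$, hence is inherited directly from Theorem~\ref{T:continuous_time_attractor}f).

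I expect the only delicate point to be the bookkeeping in c): one must verify that the auxiliary sets $U(t_{0},t_{0}-\sigma)D$ and $\mathcal{M}_{U}(t_{0})$ are bounded, so that \eqref{H:forward_continuous_dependence} may be invoked at all, and then track carefully how the pullback rate $\tilde{\alpha}$ combines with the forward Lipschitz constant to reproduce precisely the prefactor $e^{\tilde{\alpha}(s_{D} + t - t_{0})}$. Everything else is a routine exploitation of the cocycle identity, the substantive dynamical content — the smoothing property, finite dimensionality, and time regularity at the reference time $t_{0}$ — being already contained in Theorem~\ref{T:continuous_time_attractor}.
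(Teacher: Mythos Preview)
The paper does not supply its own proof of this statement: Theorem~\ref{T:process_attractor} is quoted verbatim from \cite[Theorem~2.3]{Langa2010} as part of the survey in Section~\ref{S:theory}, with no accompanying argument. There is therefore nothing in the paper to compare your proposal against.

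That said, your sketch is a plausible reconstruction of how such a result is typically proved, and the overall strategy --- inherit everything for $t\le t_0$ from Theorem~\ref{T:continuous_time_attractor} and transport through $t_0$ via the cocycle identity for $t>t_0$ --- is the natural one. The argument for c) is correct: the factorisation $U(t,t-\tau)=U(t,t_0)U(t_0,t_0-\sigma)$ with $\sigma=\tau-(t-t_0)$, followed by the Lipschitz bound from~\eqref{H:forward_continuous_dependence}, reproduces the stated prefactor. One small caveat on d): you invoke~\eqref{H:forward_continuous_dependence} to obtain a Lipschitz $U(t,t_0)$ and hence preserve finite fractal dimension for $t>t_0$, but the way the hypotheses are layered in the statement (``Moreover, if $U$ also satisfies \eqref{H:continuity_forcing} and \eqref{H:time_continuity}'') leaves it slightly ambiguous whether \eqref{H:forward_continuous_dependence} is still in force at that stage; you should make explicit that the ``Moreover'' clause is cumulative, so that all of \eqref{H:continuity_forcing}--\eqref{H:forward_continuous_dependence} are available when proving d).
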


\begin{remark}
    We recall that in~\cite{Langa2010} also explicit estimates on a fractal dimension of the pullback exponential attractor have been derived. For the sake of simplicity, we neglect them here.
\end{remark}

\section{Existence results and basic energy estimate}\label{S:Existence}

In this section we recall some basic energy estimates, which are obtained naturally when proving existence of solution to system~\eqref{E:PDE}. First of all, for the sake of simplicity, we set $\epsilon = m = 1$ and we write the definition of weak solution to system~\eqref{E:PDE}.
\begin{definition}\label{D:weak_solution}
    Let $\vect{z}_{0} \eqdef (\vect{u}_{0}, \psi_{0})\in \Lpvect_{\divfree}(\Omega) \times \Hs_{(c0)}(\Omega)$ and let $\tau \in \mathbb{R}$. Then a couple $\vect{z} = (\vect{u}, \psi)$ such that
    \begin{align*}
        \vect{u}    &\in \Bochner{\Lp}{\tau}{T}{\Hsvect_{0,\divfree}(\Omega)} \cap \Bochner{\Hs}{\tau}{T}{\Hsvect[-1]_{\divfree}(\Omega)} \\
        \psi        &\in \Bochner{\Lp}{\tau}{T}{\Hs[3](\Omega)} \cap \Bochner{\Hs}{\tau}{T}{\Hs[-1](\Omega)}
    \end{align*}
    is called a \defi{weak solution} to~\eqref{E:PDE} if
    \begin{align*}
        &\duality{\dt{\vect{u}}(t)}{\vect{v}} + \duality{(\vect{u}(t) \cdot \nabla) \vect{u}(t)}{\vect{v}} + \duality{\nu \nabla \vect{u}(t)}{\nabla \vect{v}} = \duality{\mu(t) \nabla \psi(t)}{\vect{v}}\\
        &\duality{\dt{\psi}(t)}{\phi} + \duality{\vect{u}(t) \cdot \nabla \psi(t)}{\phi} = - \Ltwoprod{\nabla \mu(t)}{\nabla \phi}
    \end{align*}
    hold for a.e.\ $t \in [\tau,T]$, for all $\vect{v} \in \mathcal{V}$ and for all $\phi$ in $\Cont[\infty](\Omega)$, if
    \begin{equation*}
        \mu(t) = f(\psi(t)) - \lapl \psi(t)
    \end{equation*}
    holds for a.e.\ $t \in [\tau,T]$ in $\Hs(\Omega)$ with $\mu \in \Bochner{\Lp}{\tau}{T}{\Hs(\Omega)}$ and if
    \begin{equation*}
        \lim_{t \to \tau^{+}} \vect{u}(t)  = \vect{u}_{0} \quad \text{in $\Lpvect_{\divfree}(\Omega)$,} \qquad
        \lim_{t \to \tau^{+}} \psi(t)      = \psi_{0}     \quad \text{in $\Hs_{(c0)}(\Omega)$}.
    \end{equation*}
\end{definition}

The well-posedness for problem~\eqref{E:PDE}-\eqref{E:BC} is justified in a suitable Galerkin scheme, thanks to the following a priori estimates and the subsequent Lemma \ref{lemma93}(see e.g.~\cite{Boyer1999,Gal2010}).

\begin{theorem}\label{T:existence}
    Let assumptions~\eqref{Hp:regularity}--\eqref{Hp:constants} hold. If $\vect g$ satisfies~\eqref{Hp:gL2loc} and $\vect{z}_{0} \eqdef (\vect{u}_{0},\psi_{0})\in \myH0$, then there exists a unique weak solution $\vect{z}(t) = (\vect{u}(t), \psi(t))$ departing at time $\tau$ from the initial datum  $\vect{z}_{0}$.
\end{theorem}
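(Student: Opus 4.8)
The plan is to build the solution by a Faedo--Galerkin approximation, derive uniform bounds from the natural energy balance, pass to the limit by compactness, and establish uniqueness by a difference estimate; I only describe the main steps, leaving aside the routine computations. First I would project system~\eqref{E:PDE} onto the span of the first $n$ eigenfunctions of the Stokes operator (for $\vect u$) and of the Neumann Laplacian restricted to mean-free functions (for $\psi$), with $\mu_n$ defined accordingly. Since $f\in\Cont[5](\mathbb R)$ by~\eqref{Hp:regularity} and all the other nonlinearities are polynomial, the associated ODE system is locally well posed by Cauchy--Lipschitz. Testing the discrete momentum balance with $\vect u_n$ and the discrete Cahn--Hilliard equation with $\mu_n$ and adding, the inertial term $\duality{(\vect u_n\cdot\nabla)\vect u_n}{\vect u_n}$ vanishes and the capillary term cancels the phase transport term, leaving the energy identity
\begin{equation*}
    \ddt\left(\tfrac12\Lpnorm{\vect u_n}^{2}+\mathcal{F}(\psi_n)\right)+\nu\Lpnorm{\nabla\vect u_n}^{2}+\Lpnorm{\nabla\mu_n}^{2}=\duality{\vect g}{\vect u_n},
\end{equation*}
with $\mathcal{F}(\psi)=\tfrac12\Lpnorm{\nabla\psi}^{2}+\int_\Omega F(\psi)$. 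Absorbing the right-hand side by Poincar\'e's inequality and using the coercivity~\eqref{E:F_coercivity} together with~\eqref{Hp:gL2loc}, Gronwall's lemma gives global approximate solutions and, uniformly in $n$, the bounds $\vect u_n\in\Bochner{\Lp[\infty]}{\tau}{T}{\Lpvect_{\divfree}(\Omega)}\cap\Bochner{\Lp}{\tau}{T}{\Hsvect_{0,\divfree}(\Omega)}$, $\psi_n\in\Bochner{\Lp[\infty]}{\tau}{T}{\Hs_{(c0)}(\Omega)}$, $\nabla\mu_n\in\Bochner{\Lp}{\tau}{T}{\Lpvect(\Omega)}$ and $F(\psi_n)\in\Bochner{\Lp[\infty]}{\tau}{T}{\Lp[1](\Omega)}$.

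\emph{Bootstrap and time derivatives.} Next I would raise the regularity of the phase field. Since $\mean{\mu_n}=\mean{f(\psi_n)}$ remains bounded (polynomial growth of $f$ and the two-dimensional embedding $\Hs(\Omega)\hookrightarrow\Lp[r](\Omega)$ for every $r<\infty$), Poincar\'e--Wirtinger makes $\mu_n$ bounded in $\Bochner{\Lp}{\tau}{T}{\Hs(\Omega)}$; reading $-\lapl\psi_n=\mu_n-f(\psi_n)$ and invoking elliptic regularity yields $\psi_n$ bounded in $\Bochner{\Lp}{\tau}{T}{W^{2,r}(\Omega)}$ for all $r<\infty$, hence $\nabla\psi_n\in\Bochner{\Lp}{\tau}{T}{\Lpvect[\infty](\Omega)}$, and then, from $\nabla\lapl\psi_n=\nabla\mu_n-f'(\psi_n)\nabla\psi_n$, $\psi_n$ is bounded in $\Bochner{\Lp}{\tau}{T}{\Hs[3](\Omega)}$. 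Finally, reading the two evolution equations and using the 2D interpolation inequalities (Ladyzhenskaya for $\vect u_n\cdot\nabla\vect u_n$, and the bounds just obtained for $\mu_n\nabla\psi_n$ and $\vect u_n\cdot\nabla\psi_n$), $\dt\vect u_n$ is bounded in $\Bochner{\Lp}{\tau}{T}{\Hsvect[-1]_{\divfree}(\Omega)}$ and $\dt\psi_n$ in $\Bochner{\Lp}{\tau}{T}{\Hs[-1](\Omega)}$.

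\emph{Limit, initial data, and uniqueness.} By weak-$*$ compactness and the Aubin--Lions lemma I would extract a subsequence with $\vect u_n\to\vect u$ strongly in $\Bochner{\Lp}{\tau}{T}{\Lpvect_{\divfree}(\Omega)}$ and $\psi_n\to\psi$ strongly in $\Bochner{\Lp}{\tau}{T}{\Hs[2](\Omega)}$, both a.e.\ in $\Omega\times(\tau,T)$, together with the corresponding weak limits. Continuity of $f$, the uniform $\Lp[r]$ bounds on $\psi_n$ and a uniform integrability (Vitali) argument give $f(\psi_n)\to f(\psi)$ and $F(\psi_n)\to F(\psi)$ in $\Lp[1]$, whence $\mu_n\to\mu=f(\psi)-\lapl\psi$; this suffices to pass to the limit in every nonlinear term and to obtain a weak solution with the regularity of Definition~\ref{D:weak_solution}, while the time-derivative bounds give $\vect u\in\Cont[0]([\tau,T];\Lpvect_{\divfree}(\Omega))$ and $\psi\in\Cont[0]([\tau,T];\Hs_{(c0)}(\Omega))$ --- weakly at first, then strongly via the energy equality --- so that the initial data are attained. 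For uniqueness, if $(\vect u,\psi,\mu)$ denotes the difference of two solutions with the same datum, I would test the difference of the momentum equations with $\vect u$ and the difference of the phase equations with the inverse Neumann Laplacian of $\psi$ (admissible since $\mean{\psi}=0$), controlling $\ddt(\Lpnorm{\vect u}^{2}+\norm{\Hs[-1](\Omega)}{\psi}^{2})$: the non-convex part of $F$ is dealt with through the splitting~\eqref{Hp:splitting} and the monotonicity of $f_0'$, and the trilinear and coupling terms by 2D interpolation (Ladyzhenskaya, Agmon, Gagliardo--Nirenberg) using $\vect u_i\in\Bochner{\Lp}{\tau}{T}{\Hsvect_{0,\divfree}(\Omega)}$ and $\psi_i\in\Bochner{\Lp}{\tau}{T}{\Hs[3](\Omega)}$, so that the resulting Gronwall coefficients are integrable in time.

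The step I expect to be the main obstacle is closing the a priori estimates uniformly in $n$ despite the polynomial growth of $f$ --- possible here thanks to the coercivity~\eqref{E:F_coercivity}, the two-dimensional Sobolev embeddings and the convex splitting~\eqref{Hp:splitting} --- and, relatedly, producing the strong $\Bochner{\Lp}{\tau}{T}{\Hs[2](\Omega)}$ convergence of $\psi_n$ needed to pass to the limit in $f(\psi_n)$ and in the capillary coupling $\mu_n\nabla\psi_n$; the same interpolation structure is what must then be used to make the uniqueness estimate close.
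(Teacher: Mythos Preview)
Your outline is correct and matches the paper's (very terse) argument: the paper does not prove Theorem~\ref{T:existence} in detail but only says that well-posedness follows from a Galerkin scheme together with the energy estimates of Lemma~\ref{lemmabase} and the continuous-dependence estimate of Lemma~\ref{lemma93}, referring to \cite{Boyer1999,Gal2010} for the details. Your existence part (Faedo--Galerkin, energy identity, $\bochner{\Lp[\infty]}{\myH0}$ and $\nabla\mu\in\bochner{\Lp}{\Lpvect}$ bounds, bootstrap to $\psi\in\bochner{\Lp}{\Hs[3]}$, Aubin--Lions compactness, Vitali passage to the limit in $f(\psi_n)$) is exactly this route.

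The one genuine methodological difference is in the uniqueness proof. You test the difference of the phase equations with $(-\lapl)^{-1}\psi$ and close a Gronwall estimate on $\Lpnorm{\vect u}^{2}+\norm{\Hs[-1]}{\psi}^{2}$, using monotonicity of $f_0$ from~\eqref{Hp:splitting} to handle the nonlinearity. The paper instead proves the quantitative continuous-dependence Lemma~\ref{lemma93} by testing the difference of the phase equations with $-2\lapl\psi$, obtaining a Gronwall on the full $\myH0$-norm $\Lpnorm{\vect u}^{2}+\Lpnorm{\nabla\psi}^{2}$ (and even an integrated $\Lpnorm{\nabla\lapl\psi}^2$ bound). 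Your $H^{-1}$ approach is lighter and exploits the convex splitting directly, while the paper's $H^{1}$-level estimate is sharper and is later reused (e.g.\ in Lemma~\ref{lemma95}) for the smoothing property needed for the pullback exponential attractor; both are standard and both close in $2D$.

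One small caveat: your bootstrap to $\psi_n\in\bochner{\Lp}{\Hs[3]}$ at the \emph{approximate} level tacitly uses $-\lapl\psi_n=\mu_n-f(\psi_n)$ pointwise, whereas in the Galerkin scheme only $\mu_n=-\lapl\psi_n+P_n f(\psi_n)$ holds. This is harmless (either argue with the Neumann eigenbasis so that $-\lapl$ commutes with $P_n$, or---cleaner---pass to the limit with the basic energy bounds and do the $\Hs[3]$ bootstrap on the limit $\psi$, as the references do), but it is the one place where your sketch hides a standard technicality.
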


We now obtain the first basic energy estimates that will be the basis for the estimates of the following sections.

\begin{lemma}\label{lemmabase}
    If $\vect g$ satisfies \eqref{Hp:gL2loc} and $\vect{z}(t) = (\vect{u}(t), \psi(t))$ is the solution departing at time $\tau$ from the initial datum  $\vect{z}_{0} \eqdef (\vect{u}_{0},\psi_{0})\in \myH0$, denoting by $\mu(t)$ the corresponding chemical potential, there holds
    \begin{align}\label{basic}
        & \Lpnorm{\vect{u}(t)}^2 + \Lpnorm{\nabla \psi(t)}^2 +2 \Lpnorm[1]{F(\psi(t))} + \int_{\tau}^{t} \left[ \nu \norm{\Hsvect_{0, \divfree}(\Omega)}{\vect{u}(s)}^2 +\Lpnorm{\nabla\mu(s)}^{2} \right] \, \mathrm{d}s\\
        \leqslant & \Lpnorm{\vect{u}_{0}}^{2} + \Lpnorm{\nabla \psi_{0}}^{2} + 2 \Lpnorm[1]{F(\psi_0)} + \Const \int_{\tau}^{t} \Lpnorm{\vect{g}(s)}^{2} \, \mathrm{d}s. \nonumber
    \end{align}
    Besides, there hold
    \begin{align}\label{intDelta22}
        &\int_{\tau}^{t} \Lpnorm{\nabla \psi(s)}^{2} \, \mathrm{d}s + \int_{\tau}^{t} \Lpnorm[1]{F(\psi(s))} \, \mathrm{d}s + \int_{\tau}^{t} \Lpnorm{\lapl \psi(s)}^{2} \, \mathrm{d}s\\
        \leqslant & \Const \left( \Lpnorm{\vect{u}_{0}}^{2} + \Lpnorm{\nabla \psi_{0}}^{2} + 2 \Lpnorm[1]{F(\psi_{0})} + \int_{\tau}^{t} \Lpnorm{\vect{g}(s)}^{2} \, \mathrm{d}s \right) + \Const (t - \tau) \nonumber
    \end{align}
    as well as
    \begin{align}\label{intDelta24}
        & \int_{\tau}^{t} \Lpnorm{\lapl \psi(s)}^{4} \, \mathrm{d}s \\
        \leqslant & \Const \left( \Lpnorm{\vect{u}_{0}}^{2} + \Lpnorm{\nabla \psi_{0}}^{2} + 2 \Lpnorm[1]{F(\psi_{0})} + \int_{\tau}^{t} \Lpnorm{\vect{g}(s)}^{2} \, \mathrm{d}s \right)^{2} \nonumber \\
        & {}+ \Const (t-\tau) \left( \Lpnorm{\vect{u}_{0}}^{2} + \Lpnorm{\nabla \psi_{0}}^{2} + 2 \Lpnorm[1]{F(\psi_{0})} + \int_{\tau}^{t} \Lpnorm{\vect{g}(s)}^{2} \, \mathrm{d}s \right) .\nonumber
    \end{align}
\end{lemma}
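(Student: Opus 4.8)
The plan is to derive the three estimates in order, obtaining the second from the first by integrating a suitable lower bound for $\Lpnorm{\lapl\psi}^2$, and the third from the second by a careful interpolation that keeps the powers of the data under control.

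\emph{Step 1: the basic energy estimate \eqref{basic}.} I would test the Navier--Stokes equation with $\vect u$ and the Cahn--Hilliard equation with $\mu$, then add. The convective term $\duality{(\vect u\cdot\nabla)\vect u}{\vect u}$ vanishes by incompressibility and the boundary condition, and the two coupling terms $\duality{\mu\nabla\psi}{\vect u}$ and $\duality{\vect u\cdot\nabla\psi}{\mu}$ cancel (again using $\div\vect u=0$). Using $\mu=f(\psi)-\lapl\psi$ one has $\duality{\mu}{\dt\psi}=\ddt\bigl(\tfrac12\Lpnorm{\nabla\psi}^2+\Lpnorm[1]{F(\psi)}\bigr)$, so that the energy
\begin{equation*}
    \mathcal E(t)\eqdef\tfrac12\Lpnorm{\vect u(t)}^2+\tfrac12\Lpnorm{\nabla\psi(t)}^2+\Lpnorm[1]{F(\psi(t))}
\end{equation*}
satisfies $\ddt\mathcal E+\nu\norm{\Hsvect_{0,\divfree}(\Omega)}{\vect u}^2+\Lpnorm{\nabla\mu}^2=\duality{\vect g}{\vect u}$. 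On the right I bound $\duality{\vect g}{\vect u}\leqslant\tfrac{\nu}{2}\norm{\Hsvect_{0,\divfree}(\Omega)}{\vect u}^2+\Const\Lpnorm{\vect g}^2$ via Poincaré--Korn, absorb, and integrate over $[\tau,t]$; this gives \eqref{basic}. Positivity of $F$ (Assumption~\eqref{Hp:positivity}) makes every term on the left nonnegative.

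\emph{Step 2: estimate \eqref{intDelta22}.} From \eqref{basic} I already control $\int_\tau^t\Lpnorm{\nabla\psi}^2$ (indeed $\Lpnorm{\nabla\psi(s)}^2\leqslant2\mathcal E(\tau)+\Const\int\Lpnorm{\vect g}^2$ pointwise, so integrating over an interval of length $t-\tau$ yields the $\Const(t-\tau)$ term) and likewise $\int_\tau^t\Lpnorm[1]{F(\psi)}$. For the $\lapl\psi$ term I test $\mu=f(\psi)-\lapl\psi$ with $-\lapl\psi$, getting $\Lpnorm{\lapl\psi}^2=-\duality{\mu}{\lapl\psi}+\duality{f(\psi)}{\lapl\psi}$. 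Integrating by parts, $\duality{f(\psi)}{\lapl\psi}=-\duality{f'(\psi)\nabla\psi}{\nabla\psi}=-\duality{f'(\psi)}{|\nabla\psi|^2}$; using the splitting $F=F_0-\alpha y^2+\gamma y+\beta$ with $F_0$ convex, $f'=F_0''-2\alpha\geqslant-2\alpha$, so $\duality{f(\psi)}{\lapl\psi}\leqslant2\alpha\Lpnorm{\nabla\psi}^2$. The term $-\duality{\mu}{\lapl\psi}\leqslant\tfrac12\Lpnorm{\lapl\psi}^2+\tfrac12\Lpnorm{\mu}^2$, and $\Lpnorm{\mu}^2\leqslant\Const(\Lpnorm{\nabla\mu}^2+\mean{\mu}^2)$ where $\mean\mu=\mean{f(\psi)}$ is bounded by $\Const(1+\Lpnorm[1]{F(\psi)})$ using \eqref{Fcontrol} with $k=0$ (or, more crudely, the growth assumption together with the energy bound). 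Absorbing $\tfrac12\Lpnorm{\lapl\psi}^2$ and integrating, the right-hand side is controlled by $\int_\tau^t\Lpnorm{\nabla\mu}^2$ (bounded via \eqref{basic}) plus $\int_\tau^t\Lpnorm{\nabla\psi}^2$ plus $\int_\tau^t(1+\Lpnorm[1]{F(\psi)})^2$; the last is handled by $\Lpnorm[1]{F(\psi(s))}\leqslant2\mathcal E(\tau)+\Const\int\Lpnorm{\vect g}^2$ pointwise, so $\int_\tau^t(1+\Lpnorm[1]{F(\psi)})\,\mathrm ds\leqslant\Const(1+\mathcal E(\tau)+\int\Lpnorm{\vect g}^2)(t-\tau)$, which after re-grouping yields \eqref{intDelta22}.

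\emph{Step 3: estimate \eqref{intDelta24}.} The point is to bound $\int_\tau^t\Lpnorm{\lapl\psi}^4$ using only what is already available. I would use the interpolation/elliptic estimate $\Lpnorm{\lapl\psi}^2\leqslant\Const\Lpnorm{\nabla\psi}\,\Lpnorm{\nabla\lapl\psi}$ combined with $\Lpnorm{\nabla\lapl\psi}\leqslant\Const(\Lpnorm{\nabla\mu}+\Lpnorm{\nabla f(\psi)})$ (from $\nabla\mu=\nabla f(\psi)-\nabla\lapl\psi$ and elliptic regularity for $-\lapl$ with Neumann data), and control $\Lpnorm{\nabla f(\psi)}=\Lpnorm{f'(\psi)\nabla\psi}$ by $\Lpnorm[\infty]{\nabla\psi}$-type terms — or, more robustly, the standard trick $\Lpnorm{\lapl\psi}^4\leqslant\Const\Lpnorm{\nabla\psi}^2\,\Lpnorm{\nabla\psi}^2_{\Hs[2]}$ reducing to $\sup_s\Lpnorm{\nabla\psi(s)}^2$ (from \eqref{basic}) times $\int_\tau^t\Lpnorm{\lapl\psi}^2$ plus higher terms. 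Concretely: write $\Lpnorm{\lapl\psi}^4=\Lpnorm{\lapl\psi}^2\cdot\Lpnorm{\lapl\psi}^2$, bound one factor by its supremum is not allowed (it is not controlled yet), so instead use Agmon/Gagliardo--Nirenberg $\Lpnorm{\lapl\psi}\leqslant\Const\Lpnorm{\nabla\psi}^{1/2}\Lpnorm{\nabla\lapl\psi}^{1/2}$, giving $\Lpnorm{\lapl\psi}^4\leqslant\Const\Lpnorm{\nabla\psi}^2\Lpnorm{\nabla\lapl\psi}^2$; then $\sup_s\Lpnorm{\nabla\psi}^2$ is controlled by the data (from \eqref{basic}), and I need $\int_\tau^t\Lpnorm{\nabla\lapl\psi}^2$. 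From the Cahn--Hilliard equation $\Lpnorm{\nabla\lapl\psi}\leqslant\Lpnorm{\nabla\mu}+\Lpnorm{\nabla f(\psi)}$ and $\Lpnorm{\nabla f(\psi)}\leqslant\Lpnorm[\infty]{f'(\psi)}\Lpnorm{\nabla\psi}\leqslant\Const(1+\Lpnorm[\infty]{\psi}^p)\Lpnorm{\nabla\psi}$; here Assumption~\eqref{Hp:polynomial_growth} is what keeps the resulting power of the data uniform in $p$, after using $\Lpnorm[\infty]{\psi}\leqslant\Const\Lpnorm{\nabla\psi}^{1/2}\Lpnorm{\lapl\psi}^{1/2}$ and Young's inequality to trade the high power of $\Lpnorm{\lapl\psi}$ for a controlled integral (this is exactly the mechanism described after \eqref{Fcontrol}). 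Integrating and squaring/grouping the data factors — the $\int_\tau^t\Lpnorm{\nabla\mu}^2$ contributes the squared-data term via \eqref{basic}, and the $F(\psi)$-controlled part contributes both the squared term and the linear-in-$(t-\tau)$ term — gives \eqref{intDelta24}.

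\emph{Main obstacle.} The delicate point is Step 3: estimating $\int_\tau^t\Lpnorm{\lapl\psi}^4$ without letting the polynomial exponent $p$ of $f$ enter the \emph{powers} of the data norms. The nonlinear term $\Lpnorm{\nabla f(\psi)}$ naturally produces $\Lpnorm[\infty]{\psi}^p$, hence $\Lpnorm{\lapl\psi}^{p/2}$, which is not $L^4$-integrable in time a priori for large $p$; the resolution is to use Assumption~\eqref{Hp:polynomial_growth}/\eqref{Fcontrol} to re-express such terms through $\Lpnorm[1]{F(\psi)}$ with exponents that collapse to fixed numbers, and to absorb the dangerous high power of $\Lpnorm{\nabla\lapl\psi}$ via Young's inequality against the good term already present — so that the shape of $F$ only survives in the multiplicative constants $\Const$. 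Everything else is the routine Navier--Stokes / Cahn--Hilliard energy bookkeeping.
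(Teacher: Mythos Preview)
Your Step~1 is correct and coincides with the paper's argument.

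Your Step~2 does not deliver \eqref{intDelta22} as stated. Bounding $\Lpnorm{\nabla\psi(s)}^{2}$ and $\Lpnorm[1]{F(\psi(s))}$ pointwise by the data and then integrating over $[\tau,t]$ gives a term $(t-\tau)\cdot(\text{data})$, which is \emph{not} of the claimed form $\Const(\text{data})+\Const(t-\tau)$. The paper instead first ``completes the norms'' on the dissipation side: from $\duality{\mu}{\psi}=\duality{\mu-\mean{\mu}}{\psi}$ and the convexity of $F_{0}$ one proves $\Lpnorm{\nabla\psi}^{2}+\Lpnorm[1]{F(\psi)}\leqslant \Lpnorm{\nabla\mu}^{2}+\delta\Lpnorm[2+q]{\psi}^{2+q}+\Const$, which, absorbed by coercivity and added to the energy identity, yields the differential inequality
\[
\timeder\bigl(\Lpnorm{\vect u}^{2}+\Lpnorm{\nabla\psi}^{2}+2\Lpnorm[1]{F(\psi)}\bigr)+\Const\bigl(\Lpnorm{\nabla\vect u}^{2}+\Lpnorm{\nabla\psi}^{2}+\Lpnorm[1]{F(\psi)}+\Lpnorm{\nabla\mu}^{2}\bigr)\leqslant \Const(1+\Lpnorm{\vect g}^{2}).
\]
Integrating this gives the first two integrals in \eqref{intDelta22} with the correct structure. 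For $\int\Lpnorm{\lapl\psi}^{2}$ your detour through $\Lpnorm{\mu}^{2}$ and $\mean{\mu}^{2}$ is also unnecessary and lossy: since $\lapl\psi$ has zero mean, $-\duality{\mu}{\lapl\psi}=\duality{\nabla\mu}{\nabla\psi}\leqslant\Lpnorm{\nabla\mu}\Lpnorm{\nabla\psi}$, so combined with your own bound $\duality{f(\psi)}{\lapl\psi}\leqslant 2\alpha\Lpnorm{\nabla\psi}^{2}$ you obtain directly
\begin{equation}\label{eq:deltainq-review}
\Lpnorm{\lapl\psi}^{2}\leqslant \Lpnorm{\nabla\mu}\Lpnorm{\nabla\psi}+2\alpha\Lpnorm{\nabla\psi}^{2},
\end{equation}
which is the paper's key pointwise inequality; integrating it and using the previous step completes \eqref{intDelta22}.

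Your Step~3 has a genuine gap. Writing $\Lpnorm{\lapl\psi}^{4}\leqslant\Const\Lpnorm{\nabla\psi}^{2}\Lpnorm{\nabla\lapl\psi}^{2}$ and then estimating $\Lpnorm{\nabla\lapl\psi}\leqslant\Lpnorm{\nabla\mu}+\Lpnorm{\nabla f(\psi)}$ forces you to control $\Lpnorm{f'(\psi)\nabla\psi}$, and any bound through $\Lpnorm[\infty]{\psi}^{p}$ or through \eqref{Fcontrol} inevitably produces powers of $\Lpnorm{\lapl\psi}$ depending on $p$; there is no Young trick that closes this for arbitrary $p$. This is precisely the obstacle the paper is designed to avoid. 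The paper's argument for \eqref{intDelta24} never touches $\nabla\lapl\psi$ or $f$ at all: it simply \emph{squares} the pointwise inequality \eqref{eq:deltainq-review},
\[
\Lpnorm{\lapl\psi}^{4}\leqslant \Const\bigl(\Lpnorm{\nabla\mu}^{2}\Lpnorm{\nabla\psi}^{2}+\Lpnorm{\nabla\psi}^{4}\bigr)\leqslant \Const\Lpnorm{\nabla\psi}^{2}\bigl(\Lpnorm{\nabla\mu}^{2}+\Lpnorm{\lapl\psi}^{2}\bigr),
\]
and integrates in time, pulling out $\sup_{s}\Lpnorm{\nabla\psi(s)}^{2}\leqslant(\text{data})$ from \eqref{basic} and bounding the remaining integral by \eqref{basic} and \eqref{intDelta22}. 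The point is that \eqref{eq:deltainq-review} uses only the lower bound $f'\geqslant -2\alpha$, so no growth information about $f$ enters \eqref{intDelta24}.
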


\begin{proof}
    In order to obtain our first (dissipative) a priori estimate, we multiply the first equation in~\eqref{E:PDE} by $\vect{u}$ and the third by $\mu$. Recalling the antisymmetric property of the convective term in the Navier Stokes equation and exploiting the useful vector identity
    \begin{align*}
        \Ltwoprod{ \dt{\psi}}{\mu} & = - \Ltwoprod{\dt{\psi}}{\lapl \psi} + \Ltwoprod{f(\psi)}{ \dt{\psi}}\\
        = &\frac{1}{2} \timeder \Lpnorm{\nabla \psi}^{2} + \Ltwoprod{f(\psi)}{ \dt{\psi}}\\
        = &\frac{1}{2} \timeder \left( \Lpnorm{\nabla \psi}^{2} + 2 \Lpnorm[1]{ F(\psi)} \right),
    \end{align*}
    we obtain
    \begin{equation}\label{E:stima_energia_step1}
        \frac{1}{2} \timeder \left( \Lpnorm{\vect{u}}^{2} + \Lpnorm{\nabla \psi}^{2} + 2 \Lpnorm[1]{ F(\psi)} \right) +  \nu \Lpnorm{\nabla \vect{u}}^{2} + \Lpnorm{\nabla \mu}^{2} = \Ltwoprod{\vect{g} }{\vect{u}}.
    \end{equation}
    Recalling Poincar\'e inequality for $\vect{u}$, integrating this formula with respect to time, we deduce~\eqref{basic}.

    We now have to ``complete the norms'' on the left hand side of~\eqref{E:stima_energia_step1}. From the definition of the chemical potential $\mu$ (i.e.\ from the fourth equation in~\eqref{E:PDE}) we have
    \begin{equation*}
        \Ltwoprod{\mu}{\psi} = \Lpnorm{\nabla \psi}^{2} + \Ltwoprod{ \psi}{f(\psi)}.
    \end{equation*}
    Since, by assumption, $\psi$ is mean free, we also deduce
    \begin{equation*}
        \Ltwoprod{\mu}{\psi} = \Ltwoprod{\mu - \mean{\mu}}{\psi} \leqslant \frac12\Lpnorm{\nabla \mu}^{2} + \Const \Lpnorm{\psi}^{2},
    \end{equation*}
    where $\Const$ is a constant, which only depends on the domain $\Omega$. From assumption~\eqref{Hp:splitting} on the potential $F$ we further deduce
    \begin{equation*}
        \Ltwoprod{f(\psi)}{\psi} = \Ltwoprod{f_{0}(\psi)}{\psi} - 2 \alpha \Lpnorm{\psi}^{2},
    \end{equation*}
    being $f_0=F'_0$.
    Taking into account the convexity of $F_{0}$ we can also bound the right hand side of this identity from below:
    \begin{equation*}
        \Ltwoprod{f_{0}(\psi)}{\psi} \geqslant \Lpnorm[1]{ F_{0}(\psi) - F_{0}(0)}.
    \end{equation*}
    Putting the last four estimates together and recalling Assumption~\eqref{Hp:constants}, we obtain
    \begin{align*}
        &\Lpnorm{\nabla \mu}^{2} + \Const \Lpnorm{\psi}^{2}\\
        \geqslant& \Lpnorm{\nabla \psi}^{2} + \Lpnorm[1]{ F_{0}(\psi) - F_{0}(0)} - 2 \alpha \Lpnorm{\psi}^{2}\\
        =& \Lpnorm{\nabla \psi}^{2} + \Lpnorm[1]{ F(\psi)} - \alpha \Lpnorm{\psi}^{2}.
    \end{align*}
    Therefore we get
    \begin{align*}
        \Lpnorm{\nabla \psi}^{2} +\Lpnorm[1]{ F(\psi)}
        \leqslant \Lpnorm{\nabla \mu}^{2} + \Const \Lpnorm{\psi}^{2}
        \leqslant \Lpnorm{\nabla \mu}^{2} + \delta \Lpnorm[2+q]{\psi}^{2+q} + \Const,
    \end{align*}
    where $q$ is a positive real number, $\delta$ is a (small) positive constant, which will be determined later, and $\Const$ is a positive constant, which depends only on the domain $\Omega$ and is independent of the exponent $q$ as soon as $q \geqslant \overline{q} > 0$.
    \begin{remark}
        We observe that under assumption~\eqref{Hp:polynomial_growth} we immediately have $q \geqslant 2$ so that in our case the constant $\Const$ really depends only on $\Omega$.
    \end{remark}

    By adding this last estimate and~\eqref{E:stima_energia_step1} together, choosing $\delta$ small enough, we finally deduce the basic energy estimate for system~\eqref{E:PDE}-\eqref{E:BC}
    \begin{multline}\label{E:basic_energy_estimate}
        \timeder \left( \Lpnorm{\vect{u}}^{2} + \Lpnorm{\nabla \psi}^{2} + 2 \Lpnorm[1]{ F(\psi)} \right) +
        \Const \left( \Lpnorm{\nabla \vect{u}}^{2} + \Lpnorm{\nabla \psi}^{2} +2\Lpnorm[1]{ F(\psi)} + \Lpnorm{\nabla \mu}^{2} \right)\\
        \leqslant \Const \left( 1 + \Lpnorm{\vect{g}}^{2} \right).
    \end{multline}
    Integrating with respect to time from $\tau$ to $t$, we then obtain the first part of estimate~\eqref{intDelta22}. Noticing that
    \begin{equation*}
        \duality{\nabla \mu}{\nabla\psi} = - \duality{\mu}{\lapl \psi} = \Lpnorm{\lapl \psi}^{2} - \duality{f(\psi)}{\lapl \psi} = \Lpnorm{\lapl \psi}^{2} + \duality{f'(\psi) \nabla \psi}{\nabla \psi} \geqslant  \Lpnorm{\lapl \psi}^{2} - 2 \alpha \Lpnorm{\nabla \psi}^{2},
    \end{equation*}
    we have
    \begin{equation}\label{deltainq}
        \Lpnorm{\lapl \psi}^{2} \leqslant \Lpnorm{\nabla \mu} \Lpnorm{\nabla \psi} + 2\alpha \Lpnorm{\nabla \psi}^2,
    \end{equation}
    which, integrated in time, on account of the above estimate \eqref{basic}, gives the second part of estimate~\eqref{intDelta22}.

    In order to prove~\eqref{intDelta24}, we square \eqref{deltainq}, obtaining
    \begin{equation*}
        \Lpnorm{\Delta \psi}^{4} \leqslant \Const ( \Lpnorm{\nabla \mu}^{2} \Lpnorm{\nabla \psi}^{2} + \Lpnorm{\nabla \psi}^{4} ) \leqslant \Const (\Lpnorm{\nabla \mu}^{2} \Lpnorm{\nabla \psi}^{2} + \Lpnorm{\nabla \psi}^{2} \Lpnorm{\lapl \psi}^{2}).
    \end{equation*}
    By an integration in time, in view of~\eqref{basic} and~\eqref{intDelta22} we accomplish our purpose.
\end{proof}

\begin{corollary}
\label{cor:diss}
    If $\vect g$ satisfies~\eqref{Hp:gL2loc} and~\eqref{Hp:gM2} and $\vect{z}(t) = (\vect{u}(t), \psi(t))$ is the solution departing at time $\tau$ from the initial datum  $\vect{z}_{0} \eqdef (\vect{u}_{0},\psi_{0})\in \myH0$, the following dissipative estimate holds
    \begin{align}\label{E:dissipative0}
        & \Lpnorm{\vect{u}(t)}^{2} + \Lpnorm{\nabla \psi(t)}^{2} + 2 \Lpnorm[1]{F(\psi(t))}\\
        \leqslant & \left( \Lpnorm{\vect{u}_{0}}^{2} + \Lpnorm{\nabla \psi_{0}}^{2} + 2 \Lpnorm[1]{F(\psi_{0})} \right) e^{-\Const (t - \tau)} + \Const \left( 1 + M_{\vect g}(t)\right),\qquad \forall t\geqslant \tau.\nonumber
    \end{align}
\end{corollary}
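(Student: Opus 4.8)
The starting point is the differential inequality~\eqref{E:basic_energy_estimate} established in the proof of Lemma~\ref{lemmabase}. Set
\begin{equation*}
    E(t) \eqdef \Lpnorm{\vect{u}(t)}^{2} + \Lpnorm{\nabla \psi(t)}^{2} + 2 \Lpnorm[1]{F(\psi(t))},
\end{equation*}
which is nonnegative thanks to Assumption~\eqref{Hp:positivity}. Since $\vect{u}(t) \in \Hsvect_{0,\divfree}(\Omega)$, Poincar\'e's inequality gives $\Lpnorm{\vect{u}}^{2} \leqslant \Const \Lpnorm{\nabla \vect{u}}^{2}$, so that the dissipative term on the left-hand side of~\eqref{E:basic_energy_estimate} controls $\kappa E(t)$ from below for a suitable constant $\kappa > 0$ depending only on $\Omega$; discarding the remaining nonnegative contributions (the surplus of $\Lpnorm{\nabla\vect{u}}^{2}$ and the term $\Lpnorm{\nabla\mu}^{2}$), \eqref{E:basic_energy_estimate} reduces to the scalar inequality
\begin{equation*}
    \timeder E(t) + \kappa E(t) \leqslant \Const \left( 1 + \Lpnorm{\vect{g}(t)}^{2} \right) \qquad \text{for a.e. } t \geqslant \tau.
\end{equation*}

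The plan is then to integrate this by Gronwall's lemma, obtaining
\begin{equation*}
    E(t) \leqslant E(\tau)\, e^{-\kappa(t-\tau)} + \Const \int_{\tau}^{t} e^{-\kappa(t-s)}\left( 1 + \Lpnorm{\vect{g}(s)}^{2} \right) \mathrm{d}s ,
\end{equation*}
and to estimate the two pieces of the integral separately. The deterministic part is immediate, $\int_{\tau}^{t} e^{-\kappa(t-s)}\, \mathrm{d}s \leqslant 1/\kappa$. For the forcing part I would cover $(-\infty, t]$ by the unit intervals $(t-k-1, t-k]$, $k \in \mathbb{N}$, note that $e^{-\kappa(t-s)} \leqslant e^{-\kappa k}$ there and that, by Assumption~\eqref{Hp:gM2}, $\int_{t-k-1}^{t-k} \Lpnorm{\vect{g}(s)}^{2}\, \mathrm{d}s \leqslant M_{\vect{g}}(t)$; summing the resulting geometric series yields
\begin{equation*}
    \int_{\tau}^{t} e^{-\kappa(t-s)} \Lpnorm{\vect{g}(s)}^{2}\, \mathrm{d}s \leqslant \frac{1}{1-e^{-\kappa}}\, M_{\vect{g}}(t).
\end{equation*}
Collecting these bounds and renaming the $\kappa$-dependent quantities as $\Const$ gives precisely~\eqref{E:dissipative0}, with $E(\tau) = \Lpnorm{\vect{u}_{0}}^{2} + \Lpnorm{\nabla \psi_{0}}^{2} + 2 \Lpnorm[1]{F(\psi_{0})}$.

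There is essentially no serious obstacle here: the computation is routine once~\eqref{E:basic_energy_estimate} is in hand. The only point deserving a little care is the passage from the absorbed forcing term $\int_\tau^t e^{-\kappa(t-s)}\Lpnorm{\vect g(s)}^2\,\mathrm{d}s$ to $M_{\vect g}(t)$, which is where Assumption~\eqref{Hp:gM2} (rather than merely~\eqref{Hp:gL2loc}) enters, via the geometric-series argument above; one should also keep in mind that $\kappa$, and hence the constants in~\eqref{E:dissipative0}, depend only on $\Omega$ and not on the growth exponent $p$, consistently with the uniformity claims of the paper.
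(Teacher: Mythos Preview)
Your proof is correct and follows essentially the same route as the paper: start from~\eqref{E:basic_energy_estimate}, use Poincar\'e's inequality to absorb $\Lpnorm{\vect u}^2$ into the dissipation, apply Gronwall, and then handle the forcing integral via the geometric-series estimate over unit intervals, which is exactly the ``known estimate'' the paper displays. The only difference is cosmetic --- you spell out the constant $\kappa$ and the partial sums a bit more explicitly.
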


\begin{proof}
    The dissipative estimate easily follows from the basic energy estimate~\eqref{E:basic_energy_estimate} using Poincar\'e's and  Gronwall's inequalities as well as the known estimate
    \begin{align*}
        &e^{-\Const t} \int_{\tau}^{t} e^{\Const s} \Lpnorm{\vect{g}(s)}^{2} \, \mathrm{d}s\\
        \leqslant & e^{-\Const t} \sum_{n=0}^{\infty} \int_{t-(n+1)}^{t-n} e^{\Const s}\Lpnorm{\vect{g}(s)}^{2} \, \mathrm{d}s \leqslant e^{-\Const t} \sum_{n=0}^{\infty} e^{\Const (t-n)} \sup_{r \leqslant t} \int_{r-1}^{r} \Lpnorm{\vect{g}(s)}^{2} \, \mathrm{d}s\\
        \leqslant & \Const \sup_{r \leqslant t} \int_{r-1}^{r} \Lpnorm{\vect{g}(s)}^{2} \, \mathrm{d}s,
    \end{align*}
    which holds for $\vect{g} \in \Bochner{\Lp_{\uloc}}{-\infty}{t}{\Lpvect(\Omega)}$.
\end{proof}

\begin{remark}
    A bound on $\nabla \lapl \psi$ in $\Bochner{\Lp}{\tau}{T}{\Lpvect(\Omega)}$ can also be easily deduced by computing the $\Lp$ norm of the gradient of the equation for the chemical potential $\mu$ in~\eqref{E:PDE} thus leading to the regularity of the order parameter field required by Definition~\ref{D:weak_solution}. However, this estimate cannot be easily made uniform with respect to the shape of the potential $F$ (and in particular with respect to the growth exponent~$p$).
\end{remark}

\begin{remark}
    From the above computations we deduce the following regularity for weak solutions of system~\eqref{E:PDE}
    \begin{align*}
        \vect{u}    &\in \Bochner{\Lp[\infty]}{\tau}{T}{\Lpvect_{\divfree}(\Omega)} \cap \Bochner{\Lp}{\tau}{T}{\Hsvect_{0, \divfree}(\Omega)}\\
        \psi        &\in \Bochner{\Lp[\infty]}{\tau}{T}{\Hs(\Omega)} \cap \Bochner{\Lp[4]}{\tau}{T}{\Hs[2](\Omega)}\\
        F(\psi)     &\in \Bochner{\Lp[\infty]}{\tau}{T}{\Lp[1](\Omega)}\\
        \nabla \mu  &\in \Bochner{\Lp}{\tau}{T}{\Lpvect (\Omega)}
    \end{align*}
    for any $T \in \mathbb{R}$, $T > \tau$.
\end{remark}

\section{Higher regularity estimates}\label{S:higher_order}

In order to obtain estimates having uniform structure with respect to the growth exponent of $f$, we henceforth assume that $F$ satisfies~\eqref{Hp:polynomial_growth}. Although all exponents and norms that appear in this and in the following sections are independent of $p$, the general constant $\Const$ will quickly become larger as $p$ grows.

In particular, Assumption~\eqref{Hp:polynomial_growth} and Lemma~\ref{lemmabase} imply
\begin{align}\label{p+3}
    \Lpnorm[\frac{p+3}{p+2-k}]{f^{(k)}(\psi(t))} & \leqslant \Const( \Lpnorm[1]{F(\psi(t))} + 1)\\
    & \leqslant \Const \left( \Lpnorm{\vect{u}_{0}}^{2} + \Lpnorm{\nabla \psi_{0}}^2 + 2 \Lpnorm[1]{F(\psi_{0})} + \int_{\tau}^{t} \Lpnorm{\vect{g}(s)}^{2} \, \mathrm{d}s+ 1 \right),\nonumber
\end{align}
being $(\vect{u}(t),\psi(t))$ the solution to~\eqref{E:PDE}-\eqref{E:BC} departing from $(\vect{u}_0,\psi_0) \in \mathcal{H}_{0}$ at time $\tau$.

The goal of this section is to improve ``by one order'' the basic regularity result already obtained. In particular, under suitable assumptions, we will get to $\vect{u} \in \bochner{\Lp[\infty]}{\Hsvect_{0, \divfree}(\Omega)} \cap \bochner{\Lp}{\Hsvect[2]_{0, \divfree}(\Omega)}$ and $\psi \in \bochner{\Lp[\infty]}{\Hs[2](\Omega)} \cap \bochner{\Lp}{\Hs[4](\Omega)}$. This will be achieved in several steps gaining before spatial regularity for $f(\psi)$ and $\mu$ and later time regularity as well: first in Lemma \ref{lemma0} we will deduce $f(\psi) \in \bochner{\Lp}{\Lp(\Omega)}$ and $\mu \in \bochner{\Lp}{\Lp(\Omega)}$; then $f(\psi) \in \bochner{\Lp}{\Lp[q](\Omega)}$ and $\lapl\psi \in \bochner{\Lp}{\Lp[q](\Omega)}$ for any $q \geqslant 1$, as shown in Lemma \ref{lemmaint0}; this will give $\mu \in \bochner{\Lp[\infty]}{\Lp(\Omega)}$ (cf. Lemma \ref{lemmafurtherintmu}) and the final result (see Lemma \ref{lemma:ordine superiore}).

\begin{notation}
    In order to simplify notation, we will denote by $\dataConst$ the quantity
    \begin{equation*}
        \dataConst \eqdef 1 + \Lpnorm{\vect{u}_{0}}^{2} + \Lpnorm{\nabla \psi_{0}}^{2} + 2 \Lpnorm[1]{F(\psi_{0})} + \int_{\tau}^{t} \Lpnorm{\vect{g}(s)}^{2} \, \mathrm{d}s,
    \end{equation*}
    which depends only on the initial data $\vect{u}_{0}$, $\psi_{0}$, on the forcing term $\vect{g}$ and on the times $t$ and $\tau$. Besides, $C$ stands for a generic positive constant depending only on $\Omega$ and possibly on $p$ and is allowed to vary even in the same line.
\end{notation}

\begin{lemma}\label{lemma0}
    If $\vect{z}(t) = (\vect{u}(t), \psi(t))$ is the solution departing at time $\tau$ from the initial datum  $\vect{z}_{0} \doteq (\vect{u}_{0}, \psi_{0}) \in \myH0$, denoting by $\mu(t)$ the corresponding chemical potential, there holds
    \begin{equation}\label{E:regularity_1}
        \int_{\tau}^{t} (\Lpnorm{f(\psi(s)}^{2} + \Lpnorm{\mu(s)}^{2}) \, \mathrm{d}s \leqslant \Const \dataConst^{2} + \Const (t-\tau) \dataConst
    \end{equation}
    for any $t\geqslant \tau$, $\tau\in \mathbb{R}$.
\end{lemma}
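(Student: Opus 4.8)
The plan is to reduce the claim to an estimate for $\int_{\tau}^{t}\Lpnorm{f(\psi(s))}^{2}\,\mathrm{d}s$. Since $\mu=f(\psi)-\lapl\psi$ we have $\Lpnorm{\mu}^{2}\leqslant 2\Lpnorm{f(\psi)}^{2}+2\Lpnorm{\lapl\psi}^{2}$, and the term $\int_{\tau}^{t}\Lpnorm{\lapl\psi(s)}^{2}\,\mathrm{d}s$ is already bounded by the right-hand side of~\eqref{intDelta22}, hence by $\Const\dataConst+\Const(t-\tau)\leqslant\Const\dataConst^{2}+\Const(t-\tau)\dataConst$ (recall $\dataConst\geqslant1$). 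To control $\int_{\tau}^{t}\Lpnorm{f(\psi(s))}^{2}\,\mathrm{d}s$ I would multiply the identity for $\mu$ (the fourth equation in~\eqref{E:PDE}) by $f(\psi)$ and integrate over $\Omega$ — a step rigorous within the Galerkin scheme (or directly, since $\psi(t)\in\Hs[2](\Omega)\subset\Lp[\infty](\Omega)$ for a.e.\ $t$ in dimension two). After integration by parts (the boundary term vanishes since $\dnu{\psi}=0$) and using $f'\geqslant-2\alpha$, exactly as in the proof of Lemma~\ref{lemmabase}, one gets
\begin{equation*}
    \Lpnorm{f(\psi)}^{2}=\Ltwoprod{\mu}{f(\psi)}-\Ltwoprod{f'(\psi)\nabla\psi}{\nabla\psi}\leqslant\Ltwoprod{\mu}{f(\psi)}+2\alpha\Lpnorm{\nabla\psi}^{2}.
\end{equation*}

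Now I would split $\mu=\bias\mu+\mean\mu$. As $\int_{\Omega}\lapl\psi=0$ we have $\mean\mu=\mean{f(\psi)}$, so $\Ltwoprod{\mean\mu}{f(\psi)}=|\Omega|\,\mean{f(\psi)}^{2}$, whereas $\Ltwoprod{\bias\mu}{f(\psi)}=\Ltwoprod{\bias\mu}{\bias{f(\psi)}}\leqslant\Const\Lpnorm{\nabla\mu}\Lpnorm{f(\psi)}$ by Poincar\'e's inequality for mean-free functions. Absorbing $\tfrac14\Lpnorm{f(\psi)}^{2}$ on the left and then integrating in time over $[\tau,t]$ yields
\begin{equation*}
    \int_{\tau}^{t}\Lpnorm{f(\psi(s))}^{2}\,\mathrm{d}s\leqslant\Const\int_{\tau}^{t}\Lpnorm{\nabla\mu(s)}^{2}\,\mathrm{d}s+\Const\int_{\tau}^{t}\mean{f(\psi(s))}^{2}\,\mathrm{d}s+\Const\int_{\tau}^{t}\Lpnorm{\nabla\psi(s)}^{2}\,\mathrm{d}s,
\end{equation*}
where the first term on the right is $\leqslant\Const\dataConst$ by~\eqref{basic} and the third is $\leqslant\Const\dataConst+\Const(t-\tau)$ by~\eqref{intDelta22}; both are of the required form.

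The main obstacle is the remaining term $\int_{\tau}^{t}\mean{f(\psi(s))}^{2}\,\mathrm{d}s$. By Remark~\ref{rem:fp+3} (with $k=0$) and H\"older's inequality on the bounded domain $\Omega$,
\begin{equation*}
    |\mean{f(\psi)}|\leqslant\tfrac{1}{|\Omega|}\Lpnorm[1]{f(\psi)}\leqslant\Const\bigl(1+\Lpnorm[1]{F(\psi)}\bigr)^{\sfrac{(p+2)}{(p+3)}},
\end{equation*}
so that, since $\sfrac{2(p+2)}{(p+3)}\leqslant2$ and $1+\Lpnorm[1]{F(\psi)}\geqslant1$, we get $\mean{f(\psi)}^{2}\leqslant\Const(1+\Lpnorm[1]{F(\psi)})^{2}$. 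Squaring the naive bound thus produces a superlinear power of $\Lpnorm[1]{F(\psi)}$, which cannot be matched against the merely linear-in-time control $\int_{\tau}^{t}\Lpnorm[1]{F(\psi(s))}\,\mathrm{d}s\leqslant\Const\dataConst+\Const(t-\tau)$ from~\eqref{intDelta22}; the way out is to pair that bound with the uniform-in-time bound $\sup_{s\in[\tau,t]}\Lpnorm[1]{F(\psi(s))}\leqslant\Const\dataConst$, which follows from~\eqref{basic}. Factoring out one copy of the supremum,
\begin{equation*}
    \int_{\tau}^{t}\mean{f(\psi(s))}^{2}\,\mathrm{d}s\leqslant\Const\dataConst\int_{\tau}^{t}\bigl(1+\Lpnorm[1]{F(\psi(s))}\bigr)\,\mathrm{d}s\leqslant\Const\dataConst^{2}+\Const(t-\tau)\dataConst.
\end{equation*}
Adding the three contributions gives $\int_{\tau}^{t}\Lpnorm{f(\psi(s))}^{2}\,\mathrm{d}s\leqslant\Const\dataConst^{2}+\Const(t-\tau)\dataConst$, and the reduction of the first paragraph extends the bound to $\int_{\tau}^{t}(\Lpnorm{f(\psi(s))}^{2}+\Lpnorm{\mu(s)}^{2})\,\mathrm{d}s$, proving~\eqref{E:regularity_1}.
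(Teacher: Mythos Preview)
Your proof is correct and follows essentially the same approach as the paper: both split $f(\psi)$ into its mean and mean-free parts, control the mean via $|\langle f(\psi)\rangle|\leqslant\Const(1+\Lpnorm[1]{F(\psi)})$ from Assumption~\eqref{Hp:polynomial_growth}, control the mean-free part through Poincar\'e on $\bar\mu$ together with the lower bound $f'\geqslant-2\alpha$, and then handle $\int_\tau^t(1+\Lpnorm[1]{F(\psi)})^2$ by pairing the $L^\infty_t$ bound from~\eqref{basic} with the $L^1_t$ bound from~\eqref{intDelta22}. The only cosmetic difference is that you obtain the pointwise inequality by testing $\mu=f(\psi)-\lapl\psi$ against $f(\psi)$, whereas the paper writes $\overline{f(\psi)}=\bar\mu+\lapl\psi$ directly and invokes~\eqref{deltainq}; the two are equivalent.
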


\begin{proof}
    Remark~2.1 allows to bound the mean value of $f(\psi)$ as
    \begin{equation*}
        |\l f(\psi)\r|\leqslant \Const |f(\psi)|_{\frac{p+3}{p+2}} \leqslant \Const (1 + \Lpnorm[1]{F(\psi)}),
    \end{equation*}
    for some $\Const>0$ depending on $p$ only through the constant $c_0$ in Assumption~\eqref{Hp:polynomial_growth}. Recalling the equation defining the chemical potential in~\eqref{E:PDE} and estimate~\eqref{deltainq}, we further deduce
    \begin{equation*}
        \Lpnorm{f(\psi)- \mean{f(\psi)}}^{2} \leqslant 2 \Lpnorm{\mu - \mean{\mu}}^{2} + 2 \Lpnorm{\lapl \psi}^{2} \leqslant \Const \Lpnorm{\nabla \mu}^{2} + 4\alpha \Lpnorm{\nabla \psi}^{2}.
    \end{equation*}
    Therefore, we gain full control on the $\Lp$-norm of $f(\psi)$, bounding its time integral as
    \begin{align*}
        & \int_{\tau}^{t} \Lpnorm{f(\psi(s))}^{2} \, \mathrm{d}s\\
        \leqslant & \Const \int_{\tau}^{t} \Lpnorm{\nabla \mu(s)}^{2} \, \mathrm{d}s + 4\alpha \int_{\tau}^{t} \Lpnorm{\nabla \psi(s)}^{2}\, \mathrm{d}s + \Const \int_{\tau}^{t}  (1 + \Lpnorm[1]{F(\psi(s))})^2 \, \mathrm{d}s\\
        \leqslant & \Const \dataConst^{2} + \Const (t - \tau) \dataConst,
    \end{align*}
    where we used~\eqref{basic} and~\eqref{intDelta22} from Lemma~\ref{lemmabase}. The second part of estimate~\eqref{E:regularity_1} follows from
    \begin{equation*}
        \Lpnorm{\mu}^{2} \leqslant 2 (\Lpnorm{\lapl \psi}^{2} + \Lpnorm{f(\psi)}^{2}),
    \end{equation*}
    the above bound and~\eqref{intDelta22}.
\end{proof}
As announced before, the integrability of $f(\psi)$ can be further improved in two steps.

\begin{lemma}\label{lemmaint0}
    If $\vect{z}(t) = (\vect{u}(t),\psi(t))$ is the solution departing at time $\tau$ from the initial datum  $\vect{z}_{0} \doteq (\vect{u}_{0}, \psi_{0}) \in \myH0$, then, for any $b > 0$ there exists $C_{b} > 0$ such that
    \begin{equation*}
        \int_{\tau}^{t} \left( \Lpnorm[b+2]{f(\psi(s))}^{2} + \Lpnorm[b+2]{\lapl \psi(s)}^{2} \right) \, \mathrm{d}s \leqslant \Const_{b} \dataConst^{2} + \Const_{b} (t - \tau) \dataConst,
    \end{equation*}
    meaning that $f(\psi)$, $\lapl \psi \in \Bochner{\Lp}{\tau}{t}{\Lp[b+2](\Omega)}$, for  any $b > 0$.
\end{lemma}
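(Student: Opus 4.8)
The plan is to bootstrap the $L^2$-integrability of $f(\psi)$ and $\lapl\psi$ obtained in Lemma~\ref{lemma0} up to $L^{b+2}$-integrability in space, uniformly in $p$. The key mechanism is a Gagliardo--Nirenberg/Sobolev interpolation in the two-dimensional domain $\Omega$: for the mean-free function $\psi$ one has $\Lpnorm[r]{\lapl\psi}\leqslant \Const \Lpnorm{\lapl\psi}^{\theta}\Lpnorm{\nabla\lapl\psi}^{1-\theta}$ (or a similar interpolation involving $\Hs[3]$), while $\nabla\lapl\psi = \nabla\mu - \nabla f(\psi) = \nabla\mu - f'(\psi)\nabla\psi$, so controlling $\nabla\lapl\psi$ reduces to controlling $\nabla\mu$ (already in $\Bochner{\Lp}{}{}{\Lpvect}$ by~\eqref{basic}) and $f'(\psi)\nabla\psi$. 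The latter is handled by Hölder, splitting $f'(\psi)$ into $\Lp^{(p+3)/(p+1)}$ via Remark~\ref{rem:fp+3} and $\nabla\psi$ into a high Lebesgue exponent controlled again by interpolation between $\Lpnorm{\nabla\psi}$ and $\Lpnorm{\lapl\psi}$; crucially the exponents $(p+3)/(p+1)$ and $(p+3)/(p+2-k)$ combine so that the polynomial growth of $F$ is absorbed into the $L^1$-norm of $F(\psi)$, which by Lemma~\ref{lemmabase} contributes only fixed (data-dependent, $p$-independent) powers.

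First I would derive the pointwise-in-time inequality
\begin{equation*}
    \Lpnorm{\nabla\lapl\psi(s)}\leqslant \Lpnorm{\nabla\mu(s)} + \Lpnorm{f'(\psi(s))\nabla\psi(s)}
    \leqslant \Lpnorm{\nabla\mu(s)} + \Const(1+\Lpnorm[1]{F(\psi(s))})\Lpnorm[r_p]{\nabla\psi(s)},
\end{equation*}
where $r_p$ is the conjugate exponent making Hölder work with $f'(\psi)\in\Lp[(p+3)/(p+1)]$; then bound $\Lpnorm[r_p]{\nabla\psi}$ by Gagliardo--Nirenberg in terms of $\Lpnorm{\nabla\psi}$ and $\Lpnorm{\lapl\psi}$, arranging the interpolation exponents so that the power of $\Lpnorm{\lapl\psi}$ that appears is at most $1$ (so that after squaring and integrating in time one may invoke~\eqref{basic} and~\eqref{intDelta22}). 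Next I would interpolate $\Lpnorm[b+2]{\lapl\psi}$ between $\Lpnorm{\lapl\psi}$ and $\Lpnorm{\nabla\lapl\psi}$, square, integrate in $s\in[\tau,t]$, and use Young's inequality to separate the factors; the $\Lpnorm{\lapl\psi}$ factors are absorbed using~\eqref{intDelta22} (giving the $\dataConst^2$ and $(t-\tau)\dataConst$ terms), the $\Lpnorm{\nabla\mu}$ factors using~\eqref{basic}, and the $\Lpnorm[1]{F(\psi)}$ factors using the $\Lp[\infty]$-in-time bound from~\eqref{basic} which contributes a further power of $\dataConst$. Finally, $\Lpnorm[b+2]{f(\psi)}$ is controlled by $\Lpnorm[b+2]{\mu}+\Lpnorm[b+2]{\lapl\psi}$ and $\Lpnorm[b+2]{\mu}$ in turn by the same interpolation applied to $\mu$ (using $\nabla\mu\in\Lp^2_t L^2_x$ and $\mu\in\Lp^\infty_t\Lp^2_x$ from Lemma~\ref{lemma0} together with $\mean{\mu}$ controlled by $\Lpnorm[1]{F(\psi)}$ via Remark~\ref{rem:fp+3}), closing the estimate with $C_b$ depending on $b$ (hence on $p$ through the Sobolev constants) but with all exponents of $\dataConst$ equal to the fixed values $2$ and $1$.

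The main obstacle is keeping the estimate \emph{uniform in $p$} in the claimed form, i.e.\ ensuring that the power of $\dataConst$ never exceeds $2$: a naive Hölder split of $\Lpnorm{f'(\psi)\nabla\psi}$ produces a factor $\Lpnorm[1]{F(\psi)}$ to some power and a factor of $\Lpnorm{\lapl\psi}$ to a power that a priori grows with $p$ (because the interpolation exponent for $\Lpnorm[r_p]{\nabla\psi}$ depends on $r_p$, which depends on $p$). The resolution I expect to use is the two-dimensional identity relating the interpolation exponents: for $\nabla\psi$ in $\RR^2$ the Gagliardo--Nirenberg exponent $1-\theta$ satisfies $\tfrac{1}{r_p}=\tfrac12-\tfrac{1-\theta}{2}\cdot(\text{scaling})$, and one checks that the product $(1+\Lpnorm[1]{F})\cdot\Lpnorm{\lapl\psi}^{1-\theta}$, after raising to the power $2$ and using $\Lpnorm{\lapl\psi}^2\in\Lp^1_t$, forces at most $\Lpnorm[1]{F(\psi)}^2\in\Lp^1_t$-type contributions, which is exactly~\eqref{intDelta22} to the power matching $\dataConst^2$; the $p$-dependence is thereby quarantined in the Sobolev/interpolation constant $C_b$, never in the exponent of $\dataConst$. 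A secondary technicality is justifying that $\psi(s)\in\Hs[3](\Omega)$ for a.e.\ $s$ (so that $\nabla\lapl\psi$ makes sense pointwise in time), which follows from the regularity already recorded in Definition~\ref{D:weak_solution} and the remark after Corollary~\ref{cor:diss}.
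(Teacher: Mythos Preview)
Your approach has a genuine gap at the very first step. You propose to control $\Lpnorm{\nabla\lapl\psi}$ via
\[
\Lpnorm{f'(\psi)\nabla\psi}\leqslant \Lpnorm[\frac{p+3}{p+1}]{f'(\psi)}\,\Lpnorm[r_p]{\nabla\psi},
\]
with $r_p$ the H\"older conjugate. But for $p\geqslant 1$ one has $(p+3)/(p+1)\leqslant 2$, so the exponent $r_p$ satisfying $\tfrac{p+1}{p+3}+\tfrac{1}{r_p}=\tfrac12$ would require $\tfrac{1}{r_p}\leqslant 0$: no such H\"older split exists. Even at the borderline $p=1$ you would need $\nabla\psi\in L^\infty$, which the two-dimensional Gagliardo--Nirenberg interpolation between $\Lpnorm{\nabla\psi}$ and $\Lpnorm{\lapl\psi}$ does not give (and bringing in $\Lpnorm{\nabla\lapl\psi}$ via Agmon is circular). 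Any attempt to place $f'(\psi)$ in a higher Lebesgue space, say $\Lp^2$, forces exponents of $\Lpnorm[1]{F(\psi)}$ that grow with $p$, destroying the claimed uniform form $\Const_b\dataConst^{2}+\Const_b(t-\tau)\dataConst$. The paper in fact warns against exactly this route in the remark following Corollary~\ref{cor:diss}: the $\Bochner{\Lp}{\tau}{T}{\Lpvect}$ bound on $\nabla\lapl\psi$ ``cannot be easily made uniform with respect to the shape of the potential~$F$''.

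The paper's proof avoids $\nabla\lapl\psi$ entirely. The key idea is to test the equation $\mu=-\lapl\psi+f(\psi)$ against $f(\psi)|f(\psi)|^{b}$: after one integration by parts the term $-\duality{\lapl\psi}{f(\psi)|f(\psi)|^{b}}$ becomes $(b+1)\duality{f'(\psi)}{|f(\psi)|^{b}|\nabla\psi|^{2}}$, and the lower bound $f'\geqslant -2\alpha$ from Assumption~\eqref{Hp:splitting} turns this into a \emph{sign}, not a term to estimate. One obtains directly
\[
\Lpnorm[b+2]{f(\psi)}^{b+2}\leqslant \Const_b\big(\Lpnorm[b+2]{\nabla\psi}^{b+2}+\Lpnorm[b+2]{\mu}^{b+2}\big),
\]
and then the two-dimensional interpolation $\Lpnorm[b+2]{h}^{b+2}\leqslant \Const_b\Lpnorm{h}^{2}\norm{\Hs}{h}^{b}$ plus Young's inequality reduce $\Lpnorm[b+2]{f(\psi)}^{2}$ to $\Lpnorm{\nabla\psi}^{2}+\Lpnorm{\lapl\psi}^{2}+\Lpnorm{\mu}^{2}+\Lpnorm{\nabla\mu}^{2}$, all of which are already time-integrable by Lemmata~\ref{lemmabase} and~\ref{lemma0}. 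The bound on $\Lpnorm[b+2]{\lapl\psi}$ then follows from $\lapl\psi=f(\psi)-\mu$. The point is that no growth information on $f$ or $f'$ is used beyond the convexity structure $f'\geqslant -2\alpha$; this is why the estimate is uniform in $p$ (and even extends to singular potentials, cf.\ the remark after the lemma).
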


\begin{remark}
    We note that this estimate extends to singular functional $f$, without appealing to approximation arguments as in \cite{Abels2009a} but with the same order of control.
\end{remark}

\begin{proof}
    Multiplying the equation for the chemical potential by $f(\psi)|f(\psi)|^{b}$ and integrating over $\Omega$, we have
    \begin{equation}\label{E:regularity_2_multiplication_by_f_b+1}
        \duality{\mu}{f(\psi) |f(\psi)|^{b}} = \Lpnorm[b+2]{f(\psi)}^{b+2} - \duality{\lapl \psi}{f(\psi) |f(\psi)|^{b}}.
    \end{equation}
    We now exploit assumption~\eqref{Hp:splitting} on~$F$, proving after an integration by parts
    \begin{equation*}
        -\duality{\lapl \psi}{f(\psi) |f(\psi)|^{b}} = (b+1) \duality{f'(\psi)}{|f(\psi)|^{b} |\nabla \psi|^{2}}\geqslant - 2\alpha (b+1) \duality{|f(\psi)|^{b}}{|\nabla \psi|^{2}}.
    \end{equation*}
    Replacement of this estimate in~\eqref{E:regularity_2_multiplication_by_f_b+1} above leads to
    \begin{equation*}
        \Lpnorm[b+2]{f(\psi)}^{b+2} \leqslant 2\alpha (b+1) \duality{|f(\psi)|^{b}}{|\nabla \psi|^{2}} + \duality{|\mu|}{|f(\psi)|^{b+1}}.
    \end{equation*}
    H\"older's and Young's inequalities then provide
    \begin{equation*}
        \Lpnorm[b+2]{f(\psi)}^{b+2} \leqslant 2\alpha (b+1)\Lpnorm[b+2]{f(\psi)}^{b} \Lpnorm[b+2]{\nabla \psi}^{2} + \Lpnorm[b+2]{\mu} \Lpnorm[b+2]{f(\psi)}^{b+1} \leqslant \frac{1}{2} \Lpnorm[b+2]{f(\psi)}^{b+2} + \Const_{b} \left( \Lpnorm[b+2]{\nabla \psi}^{b+2} + \Lpnorm[b+2]{\mu}^{b+2} \right).
    \end{equation*}
    Recalling that from standard interpolation the inequality
    \begin{equation}\label{Lint}
        \Lpnorm[b+2]{h}^{b+2} \leqslant \Const_{b} \Lpnorm{h}^{2} \norm{\Hs}{h}^{b},
    \end{equation}
    holds, we end up with
    \begin{equation}\label{fb}
        \Lpnorm[b+2]{f(\psi)}^{b+2} \leqslant \Const_{b} \left( \Lpnorm{\nabla \psi}^{2} \Lpnorm{\lapl \psi}^{b} + \Lpnorm{\mu}^{2} \norm{\Hs}{\mu}^{b} \right).
    \end{equation}
    A further application of Young's inequality then gives
    \begin{align*}
        \Lpnorm[b+2]{f(\psi)}^{2} & \leqslant \Const_{b} \left( \Lpnorm{\nabla \psi}^{\sfrac{4}{(b+2)}} \Lpnorm{\lapl \psi}^{\sfrac{2b}{(b+2)}} + \Lpnorm{\mu}^{\sfrac{4}{(b+2)}} \Lpnorm{\nabla \mu}^{\sfrac{2b}{(b+2)}} + \Lpnorm{\mu}^{2} \right)\\
        & \leqslant \Const_{b} \left( \Lpnorm{\nabla \psi}^2+\Lpnorm{\lapl \psi}^2 + \Lpnorm{\mu}^2+\Lpnorm{\nabla \mu}^{2} \right).
    \end{align*}
    Finally, integration with respect to time, leads to
    \begin{equation*}
        \int_{\tau}^{t} \Lpnorm[b+2]{f(\psi(s))}^{2} \, \mathrm{d}s \leqslant \Const_{b} \int_{\tau}^{t} \left( \Lpnorm{\nabla \psi(s)}^{2} + \Lpnorm{\lapl \psi(s)}^{2} + \Lpnorm{\mu(s)}^{2} + \Lpnorm{\nabla \mu(s)}^{2} \right) \, \mathrm{d}s,
    \end{equation*}
    thus Lemmata~\ref{lemmabase} and~\ref{lemma0} provide the first part of the desired estimate. To complete our argument, it is sufficient to exploit the equation for the chemical potential~$\mu$ and this last estimate together with Lemmata~\ref{lemmabase} and~\ref{lemma0}:
    \begin{equation*}
        \Lpnorm[b+2]{\lapl \psi}^{2} \leqslant 2 \left( \Lpnorm[b+2]{\mu}^{2} + \Lpnorm[b+2]{f(\psi)}^{2} \right) \leqslant 2 \left( \Lpnorm{\mu}^{2} + \Lpnorm{\nabla \mu}^{2} + \Lpnorm[b+2]{f(\psi)}^{2} \right). \qedhere
    \end{equation*}
\end{proof}

\begin{remark}\label{remDeltaL4L2}
    Provided that $F$ satisfies~\eqref{Hp:polynomial_growth}, estimates~\eqref{basic} and~\eqref{Fcontrol} entail $f(\psi) \in \Bochner{\Lp[\infty]}{\tau}{t}{\Lp[(p+3)/(p+2)](\Omega)}$  (cf. Remark \ref{rem:fp+3}). Besides, the above Lemma~\ref{lemmaint0} implies $f(\psi)\in \Bochner{\Lp}{\tau}{t}{\Lp[b+2](\Omega)}$ for any $b>0$. Being $(p+3)/(p+2)>1$, by the interpolation inequality
    \begin{equation}\label{interpol}
        \Lpnorm{h} \leqslant \Lpnorm[\frac{p+3}{p+2}]{h}^{\theta} \Lpnorm[b+2]{h}^{1-\theta}, \quad \text{where $\theta = \frac{b(p+3)}{2(bp+2b+p+1)}$,}
    \end{equation}
    we deduce that, when $\displaystyle 4(1-\theta) = 4 \frac{(b+2)(p+1)}{2(bp+2b+p+1)}= 2$, that is, $b = 1+p$,
    \begin{align*}
        \int_{\tau}^{t} \Lpnorm{f(\psi(s))}^{4} \, \mathrm{d}s \leqslant & \norm{\Bochner{\Lp[\infty]}{\tau}{t}{\Lp[\frac{p+3}{p+2}](\Omega)}}{f(\psi)}^{2} \int_{\tau}^{t} \Lpnorm[p+3]{f(\psi(s)}^{2} \, \mathrm{d}s\\
        \leqslant & \Const \dataConst^{4} + \Const (t - \tau) \dataConst^{3}.
    \end{align*}
    On account of~\eqref{Fcontrol}, Lemmata~\ref{lemmabase} and~\ref{lemmaint0}, it thus follows $f(\psi)\in L^4(\tau,t;L^{2}(\Omega))$ and, in particular,
    \begin{equation}\label{intL^{2}L^4}
        \norm{\Bochner{\Lp[4]}{\tau}{t}{\Lp(\Omega)}}{f(\psi)}^4 \leqslant \Const \dataConst^{4} + \Const (t - \tau) \dataConst^{3}.
    \end{equation}
\end{remark}

\begin{lemma}\label{lemmafurtherintmu}
    If $\vect{z}(t)=(\vect{u}(t), \psi(t))$ is the solution departing at time $\tau$ from any initial datum $\vect{z}_{0} \doteq (\vect{u}_{0}, \psi_{0}) \in  \myH1$ so that $\mu_{0} \eqdef f(\psi_{0}) - \lapl \psi_{0} \in \Lp(\Omega)$, then there exists $\Const > 0$ depending only on $p$ such that the chemical potential $\mu$ is bounded in $\Bochner{\Lp[\infty]}{\tau}{T}{\Lp(\Omega)} \cap \Bochner{\Lp}{\tau}{T}{\Hs[2](\Omega)}$ for all $T > \tau$ and there hold
    \begin{equation*}
        \Lpnorm{\mu(t)}^{2} \leqslant \Const \left( \Lpnorm{\mu(\tau)}^{2}  + \dataConst^{3} + (t-\tau) \dataConst \right) e^{\Const(\dataConst^{4} + (t-\tau) \dataConst^{3})}
    \end{equation*}
    and
    \begin{multline*}
        \int_{\tau}^{t} \Lpnorm{\lapl \mu(s)}^{2} \, \mathrm{d}s
        \leqslant \Const \left( \Lpnorm{\mu(\tau)}^{2} + \dataConst^{3} + (t-\tau) \dataConst \right) \left( \dataConst^{4} + (t-\tau) \dataConst^{3} \right) e^{\Const \left( \dataConst^{4} + (t-\tau) \dataConst^{3} \right)}.
    \end{multline*}
\end{lemma}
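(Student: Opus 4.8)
\emph{Strategy.} The plan is to produce a single Gronwall-type differential inequality for $\Lpnorm{\mu(t)}^{2}$, all the manipulations below being understood on the Galerkin approximants of~\eqref{E:PDE} --- for which $\mu=f(\psi)-\lapl\psi$ and its time derivative are smooth and the initial chemical potential converges to $\mu_{0}$ in $\Lp(\Omega)$, which is precisely why the hypothesis $\mu_{0}\in\Lp(\Omega)$ is needed --- and then transferred to the limit by lower semicontinuity. Differentiating $\Lpnorm{\mu}^{2}$, using $\pt\mu=f'(\psi)\pt\psi-\lapl\pt\psi$, the homogeneous Neumann conditions on $\psi$ and $\mu$, and the convective Cahn--Hilliard equation $\pt\psi=\lapl\mu-\vect{u}\cdot\nabla\psi$, one arrives at
\begin{equation*}
    \tfrac12\timeder\Lpnorm{\mu}^{2}+\Lpnorm{\lapl\mu}^{2}=\duality{\vect{u}\cdot\nabla\psi}{\lapl\mu}+\duality{f'(\psi)\lapl\mu}{\mu}-\duality{f'(\psi)(\vect{u}\cdot\nabla\psi)}{\mu}.
\end{equation*}
Since $\norm{\Hs[2]}{\mu}^{2}\leqslant\Const(\Lpnorm{\lapl\mu}^{2}+\Lpnorm{\mu}^{2})$ by Neumann elliptic regularity, it will suffice to dominate the three right-hand terms by suitable fractions of $\Lpnorm{\lapl\mu}^{2}$ plus $\Lambda(t)\Lpnorm{\mu}^{2}+\Theta(t)$ with $\Lambda,\Theta\in\Lp[1](\tau,t)$, $\int_{\tau}^{t}\Lambda\leqslant\Const(\dataConst^{4}+(t-\tau)\dataConst^{3})$ and $\int_{\tau}^{t}\Theta\leqslant\Const(\dataConst^{3}+(t-\tau)\dataConst)$; Gronwall's lemma then gives the first estimate, and integrating the inequality once more over $[\tau,t]$, discarding the Gronwall term and inserting the $\Lp[\infty]$ bound just obtained for $\Lpnorm{\mu}^{2}$, gives the bound on $\int_{\tau}^{t}\norm{\Hs[2]}{\mu(s)}^{2}\,\mathrm{d}s$.

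\emph{The convective terms.} For $\duality{\vect{u}\cdot\nabla\psi}{\lapl\mu}$ I would use Young's inequality together with the two-dimensional Ladyzhenskaya/Agmon interpolation $\Lpnorm{\vect{u}\cdot\nabla\psi}\leqslant\Lpnorm[4]{\vect{u}}\,\Lpnorm[4]{\nabla\psi}\leqslant\Const\Lpnorm{\vect{u}}^{1/2}\norm{\Hsvect_{0, \divfree}(\Omega)}{\vect{u}}^{1/2}\left(\Lpnorm{\nabla\psi}+\Lpnorm{\nabla\psi}^{1/2}\Lpnorm{\lapl\psi}^{1/2}\right)$, bounding $\Lpnorm{\vect{u}}$ and $\Lpnorm{\nabla\psi}$ uniformly in time by $\Const\dataConst^{1/2}$ (Lemma~\ref{lemmabase}) and leaving $\norm{\Hsvect_{0, \divfree}(\Omega)}{\vect{u}}$ and $\Lpnorm{\lapl\psi}$ inside the time integral, where \eqref{basic}, \eqref{intDelta22} and \eqref{intDelta24} render them integrable with powers of $\dataConst$ of the required size, a fraction of $\Lpnorm{\lapl\mu}^{2}$ being absorbed into the good term. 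The lower-order term $\duality{f'(\psi)(\vect{u}\cdot\nabla\psi)}{\mu}$ is handled in the same spirit, additionally invoking the splitting $f'=f_{0}'-2\alpha$ and the pointwise-in-time bound \eqref{p+3} (with $k=1$) to control $\Lpnorm[(p+3)/(p+1)]{f'(\psi(s))}$ by $\Const\dataConst$, and distributing the remaining integrability among $\vect{u}$, $\nabla\psi$ and $\mu\in\Lp[2](\tau,t;\Lp[q](\Omega))$ (Lemma~\ref{lemmaint0}, Remark~\ref{remDeltaL4L2}) through two-dimensional Sobolev embeddings whose constants depend on $p$ but whose exponents do not.

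\emph{The main difficulty: $\duality{f'(\psi)\lapl\mu}{\mu}$.} Writing once more $f'=f_{0}'-2\alpha$, the contribution $-2\alpha\duality{\lapl\mu}{\mu}=2\alpha\Lpnorm{\nabla\mu}^{2}\leqslant\tfrac18\Lpnorm{\lapl\mu}^{2}+\Const\Lpnorm{\mu}^{2}$ is harmless; for the rest I would integrate by parts,
\begin{equation*}
    \duality{f_{0}'(\psi)\lapl\mu}{\mu}=-\duality{f_{0}''(\psi)\nabla\psi\,\mu}{\nabla\mu}-\duality{f_{0}'(\psi)}{|\nabla\mu|^{2}}\leqslant-\duality{f_{0}''(\psi)\nabla\psi\,\mu}{\nabla\mu},
\end{equation*}
the last term being discarded thanks to the convexity of $F_{0}$ ($f_{0}'=F_{0}''\geqslant0$). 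Estimating $\duality{f_{0}''(\psi)\nabla\psi\,\mu}{\nabla\mu}$ uniformly in $p$ is the crux of the whole section: by \eqref{p+3} (with $k=2$), $f_{0}''(\psi)\simeq f''(\psi)$ is only controlled in $\Lp[(p+3)/p](\Omega)$ --- barely more integrable than $\Lp[1](\Omega)$ --- so a naive H\"older splitting fails for $p\geqslant3$, and one is forced to exploit the convex-splitting structure of $F$ together with the two-dimensional interpolation inequalities and the already established integrability of $\nabla\psi\in\Lp[4](\tau,t;\Hs[1](\Omega))$ (from \eqref{intDelta24}), of $f(\psi),\lapl\psi\in\Lp[2](\tau,t;\Lp[q](\Omega))$ for every finite $q$ (Lemma~\ref{lemmaint0}), and of $f(\psi)\in\Lp[4](\tau,t;\Lp(\Omega))$ (Remark~\ref{remDeltaL4L2}, \eqref{intL^{2}L^4}); after a H\"older split in space and Young's inequality --- which absorbs a further fraction of $\Lpnorm{\lapl\mu}^{2}$ --- what is left is an integrable function of time, which one decomposes into a piece $\Lambda(t)\Lpnorm{\mu}^{2}$ whose $\Lambda$ integrates to $\Const(\dataConst^{4}+(t-\tau)\dataConst^{3})$ (the quartic power stemming precisely from $\int_{\tau}^{t}\Lpnorm{f(\psi)}^{4}$ in \eqref{intL^{2}L^4}) and a forcing piece $\Theta(t)$ with $\int_{\tau}^{t}\Theta\leqslant\Const(\dataConst^{3}+(t-\tau)\dataConst)$. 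This accounting, arranged so that every exponent --- and in particular every power of $\dataConst$ --- is independent of $p$, is exactly what Assumption~\eqref{Hp:polynomial_growth} is for, and I expect it to be the only genuinely delicate step of the proof; the convective estimates and the passage to the limit in the Galerkin scheme are routine.
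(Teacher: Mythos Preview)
Your overall strategy --- differentiate $\Lpnorm{\mu}^{2}$, arrive at
\[
\tfrac12\timeder\Lpnorm{\mu}^{2}+\Lpnorm{\lapl\mu}^{2}=\duality{\vect u\cdot\nabla\psi}{\lapl\mu}+\duality{f'(\psi)\lapl\mu}{\mu}-\duality{f'(\psi)(\vect u\cdot\nabla)\psi}{\mu},
\]
absorb small multiples of $\Lpnorm{\lapl\mu}^{2}$, and close by Gronwall --- is exactly the paper's approach, and your handling of the convective term $\duality{\vect u\cdot\nabla\psi}{\lapl\mu}$ is the same as well. The difference, and the gap, is in the ``main difficulty'' paragraph.

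After one integration by parts you stop at $-\duality{f_{0}''(\psi)\nabla\psi\,\mu}{\nabla\mu}$ and propose to estimate it directly by H\"older/Young. If you actually try this with the only uniform control on $f''(\psi)$ you have, namely $\Lpnorm[(p+3)/p]{f''(\psi)}\leqslant\Const(1+\Lpnorm[1]{F(\psi)})\leqslant\Const\dataConst$, the remaining three factors must sit in $\Lp[p+3]$; this forces $\Lpnorm[p+3]{\nabla\mu}\leqslant\Const\norm{\Hs[2]}{\mu}$ into the estimate, and after Young plus the interpolation $\Lpnorm{\nabla\mu}^{2}\leqslant\Lpnorm{\mu}\Lpnorm{\lapl\mu}$ you end up with $\Lambda(t)\sim \dataConst^{4}\Lpnorm{\lapl\psi}^{4}$, whose time integral is of order $\dataConst^{6}+(t-\tau)\dataConst^{5}$, not the $\dataConst^{4}+(t-\tau)\dataConst^{3}$ claimed in the lemma. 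The quartic power you attribute to $\int\Lpnorm{f(\psi)}^{4}$ is not what appears here, and none of the ingredients you list (Lemma~\ref{lemmaint0}, \eqref{intL^{2}L^4}) rescues the missing factor.

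The paper avoids this loss by a \emph{second} integration by parts: writing $\mu\nabla\mu=\tfrac12\nabla(\mu^{2})$ and using $\dnu\psi=0$ gives
\[
-\duality{f''(\psi)\nabla\psi\,\mu}{\nabla\mu}
=\tfrac12\duality{f''(\psi)\lapl\psi}{\mu^{2}}+\tfrac12\duality{f'''(\psi)|\nabla\psi|^{2}}{\mu^{2}}.
\]
Now $\mu$ appears only as $\mu^{2}$, and $\Lpnorm[p+3]{\mu}^{2}\leqslant\Const\norm{\Hs}{\mu}^{2}\leqslant\Const(\Lpnorm{\mu}^{2}+\Lpnorm{\mu}\Lpnorm{\lapl\mu})$ combines with $\Lpnorm[(p+3)/p]{f''(\psi)}\Lpnorm[p+3]{\lapl\psi}$ and $\Lpnorm[(p+3)/(p-1)]{f'''(\psi)}\Lpnorm[p+3]{\nabla\psi}^{2}$ to yield $\Lambda\sim(1+\Lpnorm[1]{F(\psi)})^{2}(1+\Lpnorm{\lapl\psi}^{4}+\Lpnorm[p+3]{\lapl\psi}^{2})$, whose integral is exactly $\Const(\dataConst^{4}+(t-\tau)\dataConst^{3})$. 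A smaller but related simplification: the paper also rewrites $-\duality{f'(\psi)(\vect u\cdot\nabla)\psi}{\mu}=\duality{f(\psi)}{\vect u\cdot\nabla\mu}$ by incompressibility, which keeps only $f$ (not $f'$) and sends this term cleanly into $\Theta$.

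So your plan produces \emph{a} Gronwall estimate, but not the one stated; the missing idea is the $\mu\nabla\mu=\tfrac12\nabla(\mu^{2})$ trick. Once you insert it, the rest of your outline goes through as written.
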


\begin{proof}
    The evolution of the chemical potential $\mu$ is governed by
    \begin{equation}\label{evolmu}
        \dt{\mu} = f'(\psi) \lapl \mu - f'(\psi) (\vect{u} \cdot \nabla)\psi - \lapl^{2} \mu + \lapl ((\vect{u} \cdot \nabla) \psi),
    \end{equation}
    as can be seen by formally differentiating with respect to time the last equation in~\eqref{E:PDE} and by taking into account the third one. The product of this equality by $\mu$ gives rise to three terms from the right hand side: in order to exploit the lower bound on $f'$, the first one can be written as
    \begin{align*}
        &\duality{f'(\psi) \lapl \mu}{\mu}\\
        = & - \duality{f'(\psi) \nabla \mu}{\nabla \mu} - \duality{f''(\psi) \nabla \psi}{\mu \nabla \mu}\\
        = & - \duality{f'(\psi) \nabla \mu}{\nabla \mu} - \frac{1}{2} \duality{f''(\psi) \nabla \psi}{\nabla (\mu^{2})}\\
        = & - \duality{f'(\psi) \nabla \mu}{\nabla \mu} + \frac{1}{2} \duality{f''(\psi) \lapl \psi}{\mu^{2}} + \frac{1}{2} \duality{f'''(\psi) \vectnorm{\nabla \psi}^{2}}{\mu^{2}}.
    \end{align*}
    Thanks to the incompressibility condition, the second one reads as
    \begin{equation*}
        -\duality{f'(\psi)(\vect{u} \cdot \nabla) \psi}{\mu} = \duality{f(\psi)}{\vect{u} \cdot \nabla \mu}.
    \end{equation*}
    Finally, noticing that the third equation in \eqref{E:PDE} and the boundary conditions \eqref{E:BC} imply $\dnu{\left( \lapl \mu-\vect u\cdot \nabla \psi\right)} = \dnu{\dt{\psi}} = 0$ on $\partial\Omega$, the last term is
    \begin{align*}
        -\duality{ \lapl^2 \mu-\lapl (\vect u\cdot \nabla\psi)}{\mu}=-\Lpnorm{\lapl \mu}^2+\duality{\vect u\cdot \nabla\psi}{\lapl \mu}.
    \end{align*}
    These computations lead us to
    \begin{multline}\label{E:mu_higher_order}
        \frac{1}{2} \timeder \Lpnorm{\mu}^{2} + \Lpnorm{\lapl \mu}^{2}\\
        = -\duality{f'(\psi) \nabla \mu}{\nabla \mu} + \frac{1}{2} \duality{f''(\psi) \lapl \psi}{\mu^{2}} + \frac{1}{2} \duality{f'''(\psi) \vectnorm{\nabla \psi}^{2}}{\mu^{2}} + \duality{f(\psi) \vect{u}}{\nabla \mu}\\
        {} + \duality{(\vect{u} \cdot \nabla) \psi}{\lapl \mu} .
    \end{multline}
    By Assumption~\eqref{Hp:splitting} on the potential $F$, the first term on the right hand side of identity~\eqref{E:mu_higher_order} is easily controlled by $2 \alpha \Lpnorm{\nabla \mu}^{2}$ while the last one can be bounded by
    \begin{align*}
        &\left| \duality{(\vect{u} \cdot \nabla) \psi}{\lapl \mu}\right|\\
        \leqslant & \Lpnorm[4]{\vect{u}} \Lpnorm[4]{\nabla \psi} \Lpnorm{\lapl \mu} \\
        \leqslant & \Const \Lpnorm{\vect{u}}^{\sfrac{1}{2}} \Lpnorm{\nabla \vect{u}}^{\sfrac{1}{2}} \Lpnorm{\nabla \psi}^{\sfrac{1}{2}}
        \Lpnorm{\lapl \psi}^{\sfrac{1}{2}} \Lpnorm{\lapl \mu}\\
        \leqslant & \frac{1}{8} \Lpnorm{\lapl \mu}^{2} + \Const \Lpnorm{\vect{u}} \Lpnorm{\nabla \vect{u}} \Lpnorm{\nabla \psi} \Lpnorm{\lapl \psi}.
    \end{align*}
    We are left to consider the other terms in~\eqref{E:mu_higher_order}. Having in mind~\eqref{p+3}, we prove
    \begin{align*}
        & \frac{1}{2} \left|\duality{ f''(\psi)\lapl\psi}{ \mu^2}\right|\\
        \leqslant & \frac12\Lpnorm[\frac{p+3}{p}]{f''(\psi)}\Lpnorm[{p+3}]{\lapl \psi}\Lpnorm[{p+3}]{\mu}^2\\
        \leqslant & \Const \Lpnorm[\frac{p+3}{p}]{f''(\psi)}\Lpnorm[p+3]{\lapl\psi} \norm{\Hs}{\mu}^2\\
        \leqslant & \Const \Lpnorm[\frac{p+3}{p}]{f''(\psi)} |\lapl\psi|_{p+3}(\Lpnorm{\mu}^2+\Lpnorm{\mu}\Lpnorm{\lapl\mu})\\
        \leqslant & \frac{1}{8}\Lpnorm{\lapl\mu}^2 + \Const\left( \Lpnorm[\frac{p+3}{p}]{f''(\psi)}^2+1\right)(\Lpnorm[p+3]{\lapl\psi}^2+1)\Lpnorm{\mu}^2.
    \end{align*}
    Analogously we have
    \begin{align*}
        & \frac{1}{2} \left|\duality{ f'''(\psi)|\nabla\psi|^2}{ \mu^2}\right|\\
        \leqslant & \frac12\Lpnorm[\frac{p+3}{p-1}]{f'''(\psi)}\Lpnorm[p+3]{\nabla \psi}^2\Lpnorm[p+3]{\mu}^2 \\
        \leqslant & \Const \Lpnorm[\frac{p+3}{p-1}]{f'''(\psi)}\Lpnorm{\lapl\psi}^2 \norm{\Hs}{\mu}^2 \\
        \leqslant & \Const \Lpnorm[\frac{p+3}{p-1}]{f'''(\psi)}\Lpnorm{\lapl\psi}^2 (\Lpnorm{\mu}^2+\Lpnorm{\mu}\Lpnorm{\lapl\mu}) \\
        \leqslant & \frac{1}{8}\Lpnorm{\lapl\mu}^2 + \Const (\Lpnorm[\frac{p+3}{p-1}]{f'''(\psi)}^2 + 1)(|\lapl\psi|_2^4 +1)\Lpnorm{\mu}^2.
    \end{align*}
    There also holds
    \begin{align*}
        & \left|\duality{ f(\psi)}{\vect{u}\cdot \nabla\mu}\right|\\
        \leqslant & \Lpnorm[\frac{p+3}{p+2}]{f(\psi)}\Lpnorm[2(p+3)]{\vect{u}}\Lpnorm[2(p+3)]{\nabla\mu}\\
        \leqslant & \Const \Lpnorm[\frac{p+3}{p+2}]{f(\psi)}\Lpnorm{\nabla \vect{u}}\norm{\Hs[2]}{ \bar\mu}\\
        \leqslant & \frac{1}{8} \Lpnorm{\lapl \mu}^2+\Const \Lpnorm{\nabla\mu}^2+
         \Const\Lpnorm[\frac{p+3}{p+2}]{f(\psi)}^2\Lpnorm{\nabla \vect{u}}^2.
    \end{align*}
    Collecting the above estimates and recalling ~\eqref{p+3}, we have
    \begin{equation}\label{E:muder}
        \timeder \Lpnorm{\mu}^2+\Lpnorm{\lapl\mu}^2 \leqslant h\Lpnorm{\mu}^2+g,
    \end{equation}
    where
    \begin{align*}
        h & =\Const (1+\Lpnorm[1]{F(\psi)})^2(1+\Lpnorm{\lapl\psi}^4 + \Lpnorm[p+3]{\lapl\psi}^2  )\\
        g& = \Const \Lpnorm{\nabla\mu}^2+\Const   \Lpnorm{\vect{u}} \Lpnorm{\nabla\vect{u}}
        \Lpnorm{\nabla\psi}
        \Lpnorm{\lapl\psi}+\Const (1+\Lpnorm[1]{F(\psi)})^2\Lpnorm{\nabla\vect{u}}^2.
    \end{align*}
    In view of~\eqref{intDelta24}, Lemmata~\ref{lemmabase} and~\ref{lemmaint0}, $h$ and $g$ are integrable quantities. Indeed, we have
    \begin{align*}
        \int_{\tau}^{t} h(s) \, \mathrm{d}s &\leqslant \Const \left( \dataConst^{4} + (t-\tau) \dataConst^{3} \right),\\
        \int_{\tau}^{t} g(s) \, \mathrm{d}s &\leqslant \Const \left( \dataConst^{3} + (t-\tau) \dataConst \right).
    \end{align*}
    By Gronwall's lemma we further deduce
    \begin{align*}
        |\mu(t)|^2_2& \leqslant \Big(|\mu(\tau)|^2_2+\int_\tau^t g(s)ds\Big) \exp{\Big(\int_\tau^t h(s)ds\Big)}
    \end{align*}
    that is,
    \begin{equation*}
        \Lpnorm{\mu(t)}^{2} \leqslant
         \Const
         \left( |\mu(\tau)|^2_2 + \dataConst^{3} + (t-\tau) \dataConst \right) e^{\Const(\dataConst^{4} + (t-\tau)\dataConst^{3})}.
    \end{equation*}
    Moreover, integrating \eqref{E:muder}, from the estimates above we also deduce
    \begin{align*}
        & \int_{\tau}^{t} \Lpnorm{\lapl \mu(s)}^{2} \, \mathrm{d}s\\
        \leqslant & \Lpnorm{\mu(\tau)}^{2} + \int_{\tau}^{t} h(s) \Lpnorm{\mu(s)}^{2} \, \mathrm{d}s + \int_{\tau}^{t} g(s) \, \mathrm{d}s\\
        \leqslant & \Const \left( |\mu(\tau)|^2_2+ \dataConst^{3} + (t-\tau) \dataConst \right) \left( \dataConst^{4} + (t-\tau) \dataConst^{3} \right) e^{\Const(\dataConst^{4} + (t-\tau) \dataConst^{3})}. \qedhere
    \end{align*}
\end{proof}

\begin{remark}\label{rem}
    The above Lemma has several consequences. First of all, from the third equation of~\eqref{E:PDE} we easily obtain $\Lpnorm{\dt{\psi}}^{2} \leqslant \Const (\Lpnorm{\lapl \mu}^{2} + \Lpnorm{\vect{u}} \Lpnorm{\nabla \vect{u}} \Lpnorm{\nabla \psi} \Lpnorm{\lapl \psi})$. Thus, Lemmata~\ref{lemmabase} and~\ref{lemmafurtherintmu} yield
    \begin{multline}\label{intptuL2}
        \int_{\tau}^{t} \Lpnorm{\dt{\psi(s)}}^{2} \, \mathrm{d}s\\
                \leqslant \Const \left( \Lpnorm{\mu(\tau)}^{2} + \dataConst^{3} + (t-\tau) \dataConst \right) \left( \dataConst^{4} + (t-\tau) \dataConst^{3} \right) e^{\Const(\dataConst^{4} (t-\tau) \dataConst^{3})}.
    \end{multline}
    Besides, by~\eqref{fb} with $b=2$, we have
    \begin{equation*}
        |f(\psi)|^4_4 \leqslant \Const (|\nabla\psi|^2_2 |\Delta\psi|^2_2+|\mu|^4_2+|\mu|^2_2|\nabla\mu|^2_2),
    \end{equation*}
    hence Lemmata~\ref{lemma0} and \ref{lemmafurtherintmu} entail $f(\psi)\in \Bochner{\Lp[4]}{\tau}{t}{\Lp[4](\Omega)}$, with
    \begin{multline}\label{effe44}
        \int_{\tau}^{t} \Lpnorm[4]{f(\psi(s))}^{4} \, \mathrm{d}s\\
        \leqslant \Const \left( \Lpnorm{\mu(\tau)}^{2} + \dataConst^{3} + (t-\tau) \dataConst \right) \left( \dataConst^{2} + (t-\tau) \dataConst \right) e^{\Const(\dataConst^{4} + (t-\tau)\dataConst^{3})}.
    \end{multline}
\end{remark}

\begin{remark}\label{remL8}
    Actually, even more uniform estimates can be deduced from the above Lemmata. For example, from~\eqref{fb}, using Ladyzhenskaja inequality and interpolation estimates, we deduce
    \begin{align*}
        \Lpnorm[4]{f(\psi)}^8 & \leqslant \Const \left(\Lpnorm{\nabla\psi}^4 \Lpnorm{\lapl\psi}^4+\Lpnorm{\mu}^8+\Lpnorm{\mu}^4 \Lpnorm{\nabla\mu}^4\right)\\
        & \leqslant \Const \left(\Lpnorm{\nabla\psi}^4 \Lpnorm{\lapl\psi}^4+\Lpnorm{\mu}^8+\Lpnorm{\mu}^6 \Lpnorm{\lapl\mu}^2\right),
    \end{align*}
    i.e.\ $f(\psi)\in \Bochner{\Lp[8]}{\tau}{t}{\Lp[4](\Omega)}$. In particular,
    \begin{align}\label{E:intfl8}
        \int_{t-1}^{t} \Lpnorm[4]{f(\psi(s))}^{8} \, \mathrm{d}s
        & \leqslant \Const \int_{t-1}^t \left( \Lpnorm{\nabla\psi(s)}^4 \Lpnorm{\lapl\psi(s)}^4+\Lpnorm{\mu(s)}^{8} + \Lpnorm{\mu(s)}^6\Lpnorm{\lapl\mu(s)}^2 \right) \, \mathrm{d}s\\
        & \leqslant \Const \left( |\mu(t-1)|^2_2+ A_{t,t-1}^{3} \right)^{4} A_{t,t-1}^{4} e^{\Const A_{t,t-1}^{4} }. \notag
    \end{align}
\end{remark}

Thanks to our assumptions on $f$ and to the previous results, we can now obtain estimates on the higher norms of the solution, which have uniform structure w.r.t.\ the shape of the potential. Dependence on the growth of the potential~$F$ is limited to the constants $\Const$, which appear in the estimate.

\begin{lemma}\label{lemma:ordine superiore}
    Given any initial datum $\vect{z}_{0} \eqdef (\vect{u}_{0}, \psi_{0}) \in \myH1$ so that $\mu_{0} \eqdef f(\psi_{0}) - \lapl \psi_{0} \in \Lp(\Omega)$, the solution departing at time $\tau$ from $\vect{z}_{0}$ satisfies
    \begin{multline}\label{E:H1_boundedness}
        \norm{\myH1}{\vect{z}(t)}^{2} \\
        \leqslant \Const \left( \norm{\myH1}{\vect{z}_0}^{2} + \Lpnorm{\mu_{0}}^{2} + \dataConst^{3} + (t-\tau) \dataConst \right) \left( \dataConst^{5} + (t-\tau) \dataConst^{4} \right) e^{\Const \left( \dataConst^{4} + (t-\tau) \dataConst^{3} \right)}
    \end{multline}
    for some constant $\Const > 0$ depending on the exponent~$p$ and on the domain~$\Omega$ but independent on the initial data. Moreover,
    \begin{multline*}
        \int_{\tau}^{t} \left(\Lpnorm{\lapl^2 \psi(s)}^{2} + \Lpnorm{\lapl \vect{u}(s)}^{2}\right) \, \mathrm{d}s\\
        \leqslant \Const \left( \norm{\myH1}{\vect{z}_0}^{2} + \Lpnorm{\mu_{0}}^{2} + \dataConst^{3} + (t-\tau) \dataConst \right) \left( \dataConst^{7} + (t-\tau) \dataConst^{6} \right) e^{\Const \left(\dataConst^{4} + (t-\tau)\dataConst^{3}\right)}.
    \end{multline*}
\end{lemma}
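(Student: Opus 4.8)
The plan is to produce two differential inequalities, one for $\Lpnorm{\nabla\vect{u}}^{2}$ and one for $\Lpnorm{\lapl\psi}^{2}$, and to close them by Gronwall's lemma, feeding in as coefficients and source terms only quantities already controlled in Lemmata~\ref{lemmabase}--\ref{lemmafurtherintmu} and in Remark~\ref{rem}. Observe first that the $\psi$-part of~\eqref{E:H1_boundedness} is essentially free: rewriting the fourth equation of~\eqref{E:PDE} together with Assumption~\eqref{Hp:splitting} as the elliptic identity $-\lapl\psi+f_{0}(\psi)=\mu+2\alpha\psi-\gamma$, testing it with $-\lapl\psi$ and using $f_{0}'=F_{0}''\geqslant0$, so that $\duality{f_{0}(\psi)}{-\lapl\psi}=\duality{f_{0}'(\psi)\nabla\psi}{\nabla\psi}\geqslant0$, one gets $\Lpnorm{\lapl\psi}^{2}\leqslant\Lpnorm{\mu+2\alpha\psi-\gamma}^{2}\leqslant\Const(\Lpnorm{\mu}^{2}+\Lpnorm{\nabla\psi}^{2}+1)$, so that the pointwise-in-time bounds on $\Lpnorm{\mu}^{2}$ (Lemma~\ref{lemmafurtherintmu}) and on $\Lpnorm{\nabla\psi}^{2}$ (estimate~\eqref{basic}) transfer at once to $\Lpnorm{\lapl\psi}^{2}$.

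For the velocity I would apply the Leray projector to the momentum equation and test against $-\mathbb{P}\lapl\vect{u}$ (the Stokes operator applied to $\vect u$); the pressure drops out and
\begin{equation*}
    \tfrac{1}{2}\timeder\Lpnorm{\nabla\vect{u}}^{2}+\nu\Lpnorm{\mathbb{P}\lapl\vect{u}}^{2}=\duality{(\vect{u}\cdot\nabla)\vect{u}}{\mathbb{P}\lapl\vect{u}}+\duality{\psi\nabla\mu}{\mathbb{P}\lapl\vect{u}}-\duality{\vect{g}}{\mathbb{P}\lapl\vect{u}},
\end{equation*}
where the coupling term has been rewritten using $\mu\nabla\psi=-\psi\nabla\mu+\nabla(\psi\mu)$ and $\mathbb{P}\lapl\vect{u}\in\Lpvect_{\divfree}(\Omega)$. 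In dimension two, Ladyzhenskaya's and Agmon's inequalities bound the convective term by $\tfrac{\nu}{4}\Lpnorm{\mathbb{P}\lapl\vect{u}}^{2}+\Const\Lpnorm{\vect{u}}^{2}\Lpnorm{\nabla\vect{u}}^{2}\cdot\Lpnorm{\nabla\vect{u}}^{2}$, the coupling term by $\tfrac{\nu}{8}\Lpnorm{\mathbb{P}\lapl\vect{u}}^{2}+\Const\norm{\Hs}{\psi}^{2}\norm{\Hs[2]}{\mu}^{2}$, and the forcing by $\tfrac{\nu}{8}\Lpnorm{\mathbb{P}\lapl\vect{u}}^{2}+\Const\Lpnorm{\vect{g}}^{2}$. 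Since $\Lpnorm{\vect{u}}^{2}$ and $\norm{\Hs}{\psi}^{2}$ are bounded by $\Const\dataConst$, while $\int_{\tau}^{t}\Lpnorm{\nabla\vect{u}}^{2}$, $\int_{\tau}^{t}\norm{\Hs[2]}{\mu}^{2}$ and $\int_{\tau}^{t}\Lpnorm{\vect{g}}^{2}$ are finite by Lemmata~\ref{lemmabase} and~\ref{lemmafurtherintmu}, both the coefficient of $\Lpnorm{\nabla\vect{u}}^{2}$ and the inhomogeneity lie in $L^{1}(\tau,t)$; Gronwall's lemma then gives the $\vect{u}$-part of~\eqref{E:H1_boundedness}, and integrating the same inequality bounds $\int_{\tau}^{t}\Lpnorm{\lapl\vect{u}}^{2}\sim\int_{\tau}^{t}\Lpnorm{\mathbb{P}\lapl\vect{u}}^{2}$ by Stokes regularity.

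For the $\Hs[4]$ bound on $\psi$ I would test the convective Cahn--Hilliard equation with $\lapl^{2}\psi$ and use $\mu=f(\psi)-\lapl\psi$ to obtain
\begin{equation*}
    \tfrac{1}{2}\timeder\Lpnorm{\lapl\psi}^{2}+\Lpnorm{\lapl^{2}\psi}^{2}=\duality{\lapl f(\psi)}{\lapl^{2}\psi}-\duality{(\vect{u}\cdot\nabla)\psi}{\lapl^{2}\psi}.
\end{equation*}
The convective term is harmless since $\Lpnorm{(\vect{u}\cdot\nabla)\psi}\leqslant\Const\Lpnorm{\vect{u}}^{1/2}\Lpnorm{\nabla\vect{u}}^{1/2}\Lpnorm{\nabla\psi}^{1/2}\Lpnorm{\lapl\psi}^{1/2}$ is already bounded. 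The crucial term is $\duality{\lapl f(\psi)}{\lapl^{2}\psi}$: I would integrate by parts once so as to pair the nonlinearity with $\nabla\lapl\psi$ and then expand $\nabla\lapl f(\psi)$ into a sum of terms $f^{(k)}(\psi)$ times products of derivatives of $\psi$ of total order $\leqslant 3$; the leading piece satisfies $-\duality{f'(\psi)\nabla\lapl\psi}{\nabla\lapl\psi}\leqslant2\alpha\Lpnorm{\nabla\lapl\psi}^{2}\leqslant\varepsilon\Lpnorm{\lapl^{2}\psi}^{2}+\Const\Lpnorm{\lapl\psi}^{2}$ once more because $f_{0}'\geqslant0$, and every other piece is estimated by Hölder's inequality, placing each factor $f^{(k)}(\psi)$ in the space $\Lp[\frac{p+3}{p+2-k}](\Omega)$ --- where, by~\eqref{Fcontrol} and~\eqref{p+3}, it is bounded by $\Const\dataConst$ with a power of $\dataConst$ \emph{independent of $p$} --- and the factors assembled from $\psi$ in Lebesgue spaces that are controlled, again with $p$-independent powers, through the two-dimensional embedding $\Hs[2](\Omega)\hookrightarrow L^{\infty}(\Omega)$ and the $L^{2}(\tau,t;\Lp[q](\Omega))$ bounds on $\lapl\psi$ of Lemma~\ref{lemmaint0}. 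After absorbing the $\Lpnorm{\lapl^{2}\psi}^{2}$-contributions one is left with $\timeder\Lpnorm{\lapl\psi}^{2}+\tfrac{1}{2}\Lpnorm{\lapl^{2}\psi}^{2}\leqslant h\Lpnorm{\lapl\psi}^{2}+g$ with $h,g\in L^{1}(\tau,t)$, whose integrals are controlled via Lemma~\ref{lemmafurtherintmu} and Remark~\ref{rem}; Gronwall's lemma and its integrated form then close the estimate for $\int_{\tau}^{t}\Lpnorm{\lapl^{2}\psi}^{2}$. Keeping track of the powers of $\dataConst$, $\norm{\myH1}{\vect{z}_{0}}$ and $\Lpnorm{\mu_{0}}$ accumulated in the three steps (all these manipulations being carried out on the Galerkin approximations and then passing to the limit) reproduces the exponents displayed in the statement.

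The step I expect to be the main obstacle is precisely the uniform-in-$p$ control of $\lapl f(\psi)$, equivalently of the $\Hs[2]$-norm of $\mu$. A naive estimate of the form $\Lpnorm[r]{f^{(k)}(\psi)}\leqslant\Const(1+\Lpnorm[\infty]{\psi}^{p+2-k})$ would inject into the final bound a power of $\norm{\Hs[2]}{\psi}$, hence of $\dataConst$, that grows linearly in $p$ and destroys the desired uniformity; Assumption~\eqref{Hp:polynomial_growth} is tailored to avoid exactly this, by trading every derivative of the potential for a fixed power of $\Lpnorm[1]{F(\psi)}$, which~\eqref{basic} bounds by $\dataConst$, thereby confining the whole $p$-dependence to the multiplicative constants $c_{k}$ and $\Const$. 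The delicate bookkeeping --- choosing, in each Hölder product, the exponents so that $f^{(k)}(\psi)$ always occupies its ``critical'' space $\Lp[\frac{p+3}{p+2-k}](\Omega)$ while the remaining factors built from $\psi$, handled through $\Hs[2]\hookrightarrow L^{\infty}$ and Lemma~\ref{lemmaint0}, carry only $p$-independent powers of norms --- is the technical core of the proof.
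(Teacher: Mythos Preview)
Your proposal is correct, but takes a genuinely different route from the paper's proof.

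The paper treats the two equations \emph{together}: it adds the test of the momentum equation against $2A\vect{u}$ to the test of the Cahn--Hilliard equation against $2\lapl^{2}\psi$, obtaining a single differential inequality for $\Lpnorm{\nabla\vect{u}}^{2}+\Lpnorm{\lapl\psi}^{2}$. The coupling term $\duality{\mu\nabla\psi}{A\vect{u}}$ is rewritten as $-\duality{\lapl\psi\,\nabla\psi}{A\vect{u}}$ and estimated via Agmon's inequality by $\tfrac{\nu}{2}\Lpnorm{A\vect{u}}^{2}+\tfrac{1}{3}\Lpnorm{\lapl^{2}\psi}^{2}+\Const\Lpnorm{\nabla\psi}^{4}\Lpnorm{\lapl\psi}^{2}$, so the $\Lpnorm{\lapl^{2}\psi}^{2}$ contribution is absorbed by the Cahn--Hilliard dissipation --- this is why the two equations must be coupled in the paper's scheme. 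The crucial nonlinear term $\duality{\lapl f(\psi)}{\lapl^{2}\psi}$ is \emph{not} integrated by parts: instead the paper records the pointwise inequality $|f'(y)|+|f''(y)|\leqslant\Const(1+|f(y)|)$ (a consequence of~\eqref{Hp:polynomial_growth} and Young's inequality), hence $\Lpnorm[4]{f'(\psi)}+\Lpnorm[4]{f''(\psi)}\leqslant\Const(1+\Lpnorm[4]{f(\psi)})$, and bounds $\duality{\lapl f(\psi)}{\lapl^{2}\psi}$ directly by $\tfrac{1}{3}\Lpnorm{\lapl^{2}\psi}^{2}+\Const(1+\Lpnorm[4]{f(\psi)}^{4})(1+\Lpnorm{\nabla\psi}^{6})$, the factor $\Lpnorm[4]{f(\psi)}^{4}$ being controlled via~\eqref{effe44}.

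Your approach instead decouples the estimates: you first extract the pointwise bound $\Lpnorm{\lapl\psi}^{2}\leqslant\Const(\Lpnorm{\mu}^{2}+\Lpnorm{\nabla\psi}^{2}+1)$ from the convex splitting and Lemma~\ref{lemmafurtherintmu}; you then run the velocity estimate alone, writing the coupling as $\psi\nabla\mu$ so that no $\Lpnorm{\lapl^{2}\psi}^{2}$ appears and the two equations need not be combined; finally, for $\int\Lpnorm{\lapl^{2}\psi}^{2}$, you integrate $\duality{\lapl f(\psi)}{\lapl^{2}\psi}$ by parts, exploit the sign of $f_{0}'$ on the leading piece $-\duality{f'(\psi)\nabla\lapl\psi}{\nabla\lapl\psi}$, and place each lower-order factor $f^{(k)}(\psi)$ in its critical space $\Lp[(p+3)/(p+2-k)]$ via Remark~\ref{rem:fp+3}. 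This is exactly the device the paper uses later, in the continuous-dependence Lemma~\ref{lemma94}, but not here. Your route avoids the auxiliary pointwise inequality $|f'|+|f''|\leqslant\Const(1+|f|)$ and the $\Lp[4](\Lp[4])$ bound~\eqref{effe44}; the paper's route is more compact and yields the displayed exponents with less bookkeeping. Both produce bounds of the required structure, with all $p$-dependence confined to the constants.
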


\begin{proof}
    In this proof, we will exploit the following inequality
    \begin{equation*}
        \vectnorm{f'(y)}+\vectnorm{f''(y)}\leqslant \Const \left(1+\vectnorm{f(y)}\right),\quad \forall y\in \mathbb R,
    \end{equation*}
    which can be easily obtained from Assumptions \eqref{Hp:coercivity}, \eqref{Hp:positivity}, \eqref{Hp:splitting}, \eqref{Hp:constants} and  \eqref{Hp:polynomial_growth} by means of the Young's inequality. In particular, we will take advantage of its straightforward consequence
    \begin{equation*}
        \Lpnorm[4]{f'(\psi)}+\Lpnorm[4]{f''(\psi)}\leqslant \Const \left(1+\Lpnorm[4]{f(\psi)}\right).
    \end{equation*}
    Adding together the product of the first equation in~\eqref{E:PDE} by $2A \vect{u} = -2 \mathbb{P} \lapl \vect{u}$ and of the third one by $2 \lapl^{2} \psi$, we obtain
    \begin{align}\label{uH1}
        & \timeder \left( \Lpnorm{\nabla \vect{u}}^{2} + \Lpnorm{\lapl \psi}^{2} \right) + 2 \nu\Lpnorm{ A \vect{u}}^2 + 2 \Lpnorm{\lapl^{2} \psi}^{2}\\
        = & 2 \duality{\vect{g}}{A \vect{u}}  - 2 \duality{(\vect{u} \cdot \nabla) \vect{u}}{A \vect{u}}+ 2 \duality{\mu \nabla \psi}{A \vect{u}} - 2 \duality{(\vect{u} \cdot \nabla) \psi}{\lapl^{2} \psi} + 2 \duality{\lapl f(\psi)}{\lapl^{2} \psi}.\nonumber
    \end{align}
    The first two terms arising from Navier-Stokes equations can be dealt with by writing
    \begin{align*}
        & \left|-2\duality{ (\vect{u}\cdot\nabla )\vect{u} }{A\vect{u}}+2\duality{ \vect{g}}{A\vect{u}}\right|\\
        \leqslant & \Const (\Lpnorm{\vect{u}}^{\sfrac12} \Lpnorm{\nabla \vect{u}} \Lpnorm{A\vect{u}}^{\sfrac32}+\Lpnorm{\vect{g}}\Lpnorm{A\vect{u}})\\
        \leqslant & \frac{\nu}{2} |A\vect{u}|^{2}_2+\Const |\vect{u}|^{2} \Lpnorm{\nabla \vect{u}}^{4} +\Const |\vect{g}|^{2}_2.
    \end{align*}
    Since $\duality{ \mu\nabla\psi}{A\vect{u}}=-\duality{ \lapl\psi\nabla\psi}{A\vect{u}}$, from the Agmon's inequality we easily have
    \begin{align*}
        & \left| 2\duality{ \mu\nabla\psi}{A\vect{u}} \right|\\
        \leqslant & 2 \Lpnorm[\infty]{\lapl\psi}\Lpnorm{\nabla\psi} \Lpnorm{A\vect{u}}\\
        \leqslant & \Const \Lpnorm{\lapl\psi}^{\sfrac12}\Lpnorm{\lapl^{2}\psi}^{\sfrac12}\Lpnorm{\nabla\psi} \Lpnorm{A\vect{u}}\\
        \leqslant & \frac{\nu}{2} \Lpnorm{A\vect{u}}^2+ \frac{1}{3} \Lpnorm{\lapl^{2}\psi}^{2}+\Const \Lpnorm{\nabla\psi}^4 \Lpnorm{\lapl\psi}^{2}.
    \end{align*}
    Then, by Ladyzhenskaja inequality and standard estimates
    \begin{align*}
        & \left| 2 \duality{(\vect{u} \cdot \nabla) \psi}{\lapl^{2} \psi} \right|\\
        \leqslant & 2 \Lpnorm[4]{\vect{u}} \Lpnorm[4]{\nabla \psi} \Lpnorm{\lapl^{2} \psi} \\
        \leqslant & \Const \Lpnorm{\vect{u}}^{\sfrac{1}{2}} \Lpnorm{\nabla \vect{u}}^{\sfrac{1}{2}} \Lpnorm{\nabla \psi}^{\sfrac{1}{2}} \Lpnorm{\lapl \psi}^{\sfrac{1}{2}} \Lpnorm{\lapl^{2} \psi}^{}\\
        \leqslant & \frac{1}{3} \Lpnorm{\lapl^{2} \psi}^{2} + \Const \Lpnorm{\vect{u}}^{2} \Lpnorm{\nabla \vect{u}}^{2} + \Const \Lpnorm{\nabla \psi}^{2} \Lpnorm{\lapl \psi}^{2}.
    \end{align*}
   We are left to consider the last term in~\eqref{uH1}, for which we exploit
   \begin{equation*}
        \Lpnorm{\lapl \psi} \Lpnorm{\nabla \lapl \psi} \leqslant \Const \Lpnorm{\nabla \psi} \Lpnorm{\lapl^{2} \psi},
   \end{equation*}
   namely,
   \begin{align*}
        & 2\left|\duality{ \lapl f(\psi)}{\lapl^{2}\psi}\right|\\
        =& \left|2\duality{ f'(\psi) \lapl\psi}{\lapl^{2}\psi}+2\duality{ f''(\psi)|\nabla\psi|^{2}}{\lapl^{2}\psi}\right|\\
        \leqslant & \Const (1+\Lpnorm[4]{f(\psi)}) (\Lpnorm[4]{\lapl\psi}+\Lpnorm[\infty]{\nabla\psi} \Lpnorm[4]{\nabla\psi} )\Lpnorm{\lapl^{2}\psi}\\
        \leqslant & \Const (1+\Lpnorm[4]{f(\psi)}) (\Lpnorm{\lapl\psi}^{\sfrac12} \Lpnorm{\nabla\lapl\psi}^{\sfrac12} +\Lpnorm{\nabla\psi}^{\sfrac12} \Lpnorm{\nabla\lapl\psi}^{\sfrac12} \Lpnorm{\nabla\psi}^{\sfrac12} \Lpnorm{\lapl\psi}^{\sfrac12})\Lpnorm{\lapl^{2}\psi}\\
        \leqslant & \Const (1+\Lpnorm[4]{f(\psi)}) (\Lpnorm{\nabla\psi}^{\sfrac12} \Lpnorm{\lapl^{2}\psi}^{\sfrac12} + \Lpnorm{\nabla\psi}^{\sfrac32} \Lpnorm{\lapl^{2}\psi}^{\sfrac12})\Lpnorm{\lapl^{2}\psi}\\
        \leqslant & \frac 13 \Lpnorm{\lapl^{2}\psi}^{2} + \Const (1+\Lpnorm[4]{f(\psi)}^4) (1+\Lpnorm{\nabla\psi}^6).
    \end{align*}
    We finally deduce the differential inequality
    \begin{align}\label{finaldiffine}
        & \timeder \left( \Lpnorm{\nabla \vect{u}}^{2} + \Lpnorm{\lapl \psi}^{2} \right) + \nu \Lpnorm{A \vect{u}}^{2} + \Lpnorm{\lapl^{2} \psi}^{2}\\
        \leqslant & \Const  \Lpnorm{\vect{u}}^{2} \Lpnorm{\nabla \vect{u}}^{4} \nonumber \\
        & \quad {} + \Const \left(\Lpnorm{\vect{u}}^{2}  \Lpnorm{\nabla \vect{u}}^{2} + (1+\Lpnorm[4]{f(\psi)}^{4})( \Lpnorm{\nabla\psi}^{6}+1) +  (\Lpnorm{\nabla\psi}^4+1)\Lpnorm{\lapl\psi}^{2} +  \Lpnorm{\vect{g}}^{2}\right).\nonumber
    \end{align}
    Introducing
    \begin{align*}
        h& = \Const \Lpnorm{\vect{u}}^{2} \Lpnorm{\nabla \vect{u}}^{2}  \\
        g& = \Const \left( \Lpnorm{\vect{u}}^{2} \Lpnorm{\nabla \vect{u}}^{2}+(1+\Lpnorm[4]{f(\psi)}^{4})( \Lpnorm{\nabla\psi}^{6}+1) + (\Lpnorm{\nabla\psi}^4+1)\Lpnorm{\lapl\psi}^{2} + \Lpnorm{\vect{g}}^{2} \right),
    \end{align*}
    the above differential inequality reads as
    \begin{equation}\label{finaldiffineasy}
        \timeder \norm{\myH1}{\vect{z}}^{2} \leqslant h \norm{\myH1}{\vect{z}}^{2} + g.
    \end{equation}
    Thus Gronwall's lemma gives
    \begin{equation*}
        \norm{\myH1}{\vect{z}(t)}^{2} \leqslant \left( \norm{\myH1}{\vect{z}(\tau)}^{2} + \int_{\tau}^{t} g(s) \, \mathrm{d}s \right) e^{\int_{\tau}^{t} h(s) \,  \mathrm{d}s}
    \end{equation*}
    where, by~\eqref{effe44} and Lemma~\ref{lemmabase} we have
    \begin{align*}
        \int_{\tau}^{t} h(s) \, \mathrm{d}s &\leqslant \Const \dataConst^{2}\\
        \int_{\tau}^{t} g(s) \, \mathrm{d}s &\leqslant \Const  \left( \Lpnorm{\mu(\tau)}^{2}  + \dataConst^{3} + (t-\tau) \dataConst \right) \left( \dataConst^{5} + (t-\tau) \dataConst^{4} \right) e^{\Const(\dataConst^{4} + (t-\tau) \dataConst^{3})}
    \end{align*}
    and the estimate~\eqref{E:H1_boundedness}. Moreover, integrating~\eqref{finaldiffine}, we also have
    \begin{multline*}
        \int_{\tau}^{t} \left(\Lpnorm{\lapl^2 \psi(s)}^{2}+\Lpnorm{\lapl \vect{u}(s)}^{2}\right) \, \mathrm{d}s\\
        \leqslant \Const \left( \norm{\myH1}{\vect{z}_{0}}^{2} + \Lpnorm{\mu(\tau)}^{2} + \dataConst^{3} + (t-\tau) \dataConst \right)
        \left( \dataConst^{7} + (t-\tau) \dataConst^{6} \right) e^{\Const \left(\dataConst^{4} + (t-\tau)\dataConst^{3}\right)}.\qedhere
    \end{multline*}
\end{proof}

In the case of regular initial data, i.e.\ $\vect{z}_{0} \in \myH1$, we thus have the sought higher regularity for solutions:
\begin{align*}
    \vect{u}    &\in \Bochner{\Lp[\infty]}{\tau}{T}{\Hsvect_{0, \divfree}(\Omega)} \cap \Bochner{\Lp}{\tau}{T}{\Hsvect[2]_{0, \divfree}(\Omega)}\\
    \psi        &\in \Bochner{\Lp[\infty]}{\tau}{T}{\Hs[2](\Omega)} \cap \Bochner{\Lp}{\tau}{T}{\Hs[4](\Omega)}\\
    \dt{\psi}   &\in \Bochner{\Lp}{\tau}{T}{\Lp(\Omega)}
\end{align*}

\begin{corollary}\label{cor:smoothing}
    Given any symbol $\vect{g}$ satisfying~\eqref{Hp:gL2loc} and~\eqref{Hp:gM2} and any $t_{0} \in \mathbb{R}$, there exists a positive constant $\Const_{\vect{g}}(t_{0})$ such that, for any bounded set $D \subset \myH0$, there exists $T = T(|D|) > 0$ depending only on $|D| \doteq \max \{ 1, \sup_{\vect{z} \in D} \norm{\myH0}{\vect{z}} \}$ such that
    \begin{equation*}
        \norm{\myH1}{\mathbf z(t)} + \Lpnorm{\mu(t)} \leqslant C_{\vect g}(t_0), \qquad t \leqslant t_{0}, \quad \tau \leqslant t-4-T(|D|),
    \end{equation*}
    where $\vect{z}(t)$ is the solution to the problem with symbol $\vect{g}$, departing at time $\tau$ from the initial datum  $\vect{z}_{0} \in D$, and $\mu(t)$ is the corresponding chemical potential. Besides, the following integral estimates hold true
    \begin{equation*}
        \int_{t-1}^t  \Lpnorm[4]{f(\psi(s)}^{8} \, {\rm d} s \leqslant Q(M_{\vect g}(t_0))
    \end{equation*}
    for $t \leqslant t_0$, $\tau \leqslant t-4-T(|D|)$, and, for $t \leqslant t_0$, $\tau \leqslant t-5-T(|D|)$,
    \begin{equation*}
        \int_{t-1}^t  \left( \Lpnorm{\lapl \vect{u}(s)}^{2} + \Lpnorm{\lapl^{2} \psi(s)}^{2} \right) \, {\rm d}s \leqslant Q(M_{\vect g}(t_0)),
    \end{equation*}
    for some nonnegative increasing function $Q$ depending on $p$ only through a multiplicative constant.
\end{corollary}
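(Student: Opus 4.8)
The plan is to show that, after a transient whose length $T$ is governed by the size $|D|$ of the initial data, the trajectory has entered a bounded absorbing set of $\myH0$ whose radius depends only on $M_{\vect{g}}(t_0)$, and then to bootstrap, on a time window of bounded length, up to $\myH1$ by means of Lemmata~\ref{lemmafurtherintmu} and~\ref{lemma:ordine superiore}. I would begin from Corollary~\ref{cor:diss}: combined with the two-dimensional Sobolev bound $\Lpnorm[1]{F(\psi_0)} \leqslant \Const(1 + \norm{\myH0}{\vect{z}_0}^{p+2})$ (valid since $\Hs(\Omega) \hookrightarrow \Lp[p+2](\Omega)$ and $|F(y)| \leqslant \Const(1 + |y|^{p+2})$ by Assumption~\eqref{Hp:growth}) and with $M_{\vect{g}}(s) \leqslant M_{\vect{g}}(t_0)$ for $s \leqslant t_0$, it yields $T = T(|D|) > 0$, increasing in $|D|$, such that
\begin{equation*}
    \norm{\myH0}{\vect{z}(s)}^{2} + 2\Lpnorm[1]{F(\psi(s))} \leqslant R^{2} \doteq \Const(1 + M_{\vect{g}}(t_0)), \qquad \tau + T \leqslant s \leqslant t_0 .
\end{equation*}
Hence, if $\tau \leqslant t - 4 - T$ and one sets $\sigma \doteq t - 4 \geqslant \tau + T$, the datum $\vect{z}(\sigma)$ lies in this absorbing set, and, since $[\sigma,t]$ has length $4$ with $t \leqslant t_0$, the data constant of the problem restarted at $\sigma$ satisfies $\dataConst_{t',\sigma} \leqslant A_\star \doteq \Const(1 + R^{2} + M_{\vect{g}}(t_0))$ for every $t' \in [\sigma,t]$, a quantity depending only on $M_{\vect{g}}(t_0)$.

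Next I would upgrade the $\myH0$ control at $\sigma$ to $\myH1$ control at a nearby instant. Applying Lemma~\ref{lemmabase}, Lemma~\ref{lemma0} and Remark~\ref{remDeltaL4L2} on $[\sigma,\sigma+1]$ shows that $\vect{u} \in \Bochner{\Lp}{\sigma}{\sigma+1}{\Hsvect_{0,\divfree}(\Omega)}$, $\lapl\psi \in \Bochner{\Lp[4]}{\sigma}{\sigma+1}{\Lp(\Omega)}$ and $\mu \in \Bochner{\Lp}{\sigma}{\sigma+1}{\Lp(\Omega)}$, all with norms bounded by powers of $A_\star$. A Chebyshev argument---the subset of $(\sigma,\sigma+1)$ on which one of these three quantities exceeds three times its average has measure strictly less than $1$---then produces an instant $\sigma_1 \in (\sigma,\sigma+1) \subset (t-4,t-3)$ at which $\vect{z}(\sigma_1) \in \myH1$ and $\mu(\sigma_1) = f(\psi(\sigma_1)) - \lapl\psi(\sigma_1) \in \Lp(\Omega)$, with $\norm{\myH1}{\vect{z}(\sigma_1)}^{2} + \Lpnorm{\mu(\sigma_1)}^{2} \leqslant \Const A_\star^{2}$.

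Starting now from the datum $\vect{z}(\sigma_1)$, I would apply Lemma~\ref{lemmafurtherintmu} and then Lemma~\ref{lemma:ordine superiore} on $[\sigma_1,t]$, an interval of length at most $4$. Since $\dataConst_{t',\sigma_1} \leqslant \Const A_\star$ uniformly in $t' \in [\sigma_1,t]$ and $t - \sigma_1 \leqslant 4$, every power of $\dataConst$ and every exponential factor $e^{\Const(\dataConst^{4} + (t-\sigma_1)\dataConst^{3})}$ occurring in those lemmata is bounded by an increasing function of $M_{\vect{g}}(t_0)$; hence $\Lpnorm{\mu(s)}$ and $\norm{\myH1}{\vect{z}(s)}$ are bounded for all $s \in [\sigma_1,t]$ by a constant $\Const_{\vect{g}}(t_0)$, and evaluation at $s = t$ gives the asserted pointwise bound. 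The first integral estimate then follows from~\eqref{E:intfl8} of Remark~\ref{remL8}: as $[t-1,t] \subset [\sigma_1,t]$, its right-hand side involves only $\Lpnorm{\mu(t-1)}$ and $\dataConst_{t,t-1}$, both already controlled by $M_{\vect{g}}(t_0)$, so $\int_{t-1}^{t}\Lpnorm[4]{f(\psi(s))}^{8}\, \mathrm{d}s \leqslant Q(M_{\vect{g}}(t_0))$. The second integral estimate is obtained from the second conclusion of Lemma~\ref{lemma:ordine superiore}: this requires that the solution carry the strong regularity $\vect{z}(\cdot) \in \myH1$, $\mu(\cdot) \in \Lp(\Omega)$, with norms controlled by $M_{\vect{g}}(t_0)$, throughout $[t-1,t]$, which forces the good slice $\sigma_1$ to precede $t-1$ with one spare unit and is the reason for the slightly stronger hypothesis $\tau \leqslant t - 5 - T$ there; it then delivers $\int_{t-1}^{t}(\Lpnorm{\lapl\vect{u}(s)}^{2} + \Lpnorm{\lapl^{2}\psi(s)}^{2})\, \mathrm{d}s \leqslant Q(M_{\vect{g}}(t_0))$. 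In all cases the dependence of $Q$ on $p$ is the one inherited from the quoted lemmata.

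I expect the genuine difficulty to be this bootstrapping step: Lemmata~\ref{lemmafurtherintmu} and~\ref{lemma:ordine superiore} demand initial data in $\myH1$ with $\mu_0 \in \Lp(\Omega)$, a regularity not available for $\vect{z}_0 \in D \subset \myH0$, so one must fabricate such an instant $\sigma_1$ from the time-integrated smoothing encoded in the basic energy estimates---and do so only after the trajectory has already reached the $\myH0$-absorbing set, since otherwise the data constant $\dataConst = \dataConst_{t,\sigma_1}$, and with it every bound, would still depend on $|D|$. It is precisely the boundedness of the window $[\sigma,t]$ that keeps the otherwise dangerous exponential factors of Lemma~\ref{lemma:ordine superiore} under control while all constants collapse to functions of $M_{\vect{g}}(t_0)$ alone.
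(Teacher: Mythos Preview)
Your argument is correct and reaches the same conclusion, but it proceeds differently from the paper. The paper does not use a Chebyshev selection of a good time slice $\sigma_1$ followed by a black-box application of Lemmata~\ref{lemmafurtherintmu} and~\ref{lemma:ordine superiore}. Instead, it goes back to the \emph{differential inequalities}~\eqref{E:muder} and~\eqref{finaldiffineasy} obtained inside those proofs and applies the Uniform Gronwall lemma to them directly: once the trajectory is in the $\myH0$-absorbing set, one has uniform control of $\int_{t-1}^{t} h$, $\int_{t-1}^{t} g$ and $\int_{t-1}^{t}\Lpnorm{\mu}^{2}$ (respectively $\int_{t-1}^{t}\norm{\myH1}{\vect z}^{2}$), and the Uniform Gronwall lemma turns this into the pointwise bounds $\Lpnorm{\mu(t)}^{2}\leqslant\Const$ and $\norm{\myH1}{\vect z(t)}^{2}\leqslant\Const$, gaining one time unit at each stage. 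This explains the successive thresholds $T+1,\,T+2,\,T+3,\,T+4,\,T+5$ in the paper's proof, and in particular why $\tau\leqslant t-5-T$ is required for the last integral estimate. Your explanation for the extra unit (``forces the good slice $\sigma_1$ to precede $t-1$ with one spare unit'') is a bit imprecise, since with $\tau\leqslant t-4-T$ your $\sigma_1\in(t-4,t-3)$ already lies well below $t-1$; the paper's extra unit really comes from the final step of integrating~\eqref{finaldiffine} over $(t-1,t)$ after the $\myH1$ bound has been secured at time $t-1$. What your approach buys is that it treats the previous lemmata as black boxes and avoids invoking the Uniform Gronwall lemma; what the paper's approach buys is that it is the standard bootstrap mechanism in attractor theory and makes the counting of time units completely transparent.
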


\begin{proof}
    In order to prove the claim, we divide our argument in several steps. At each step, thanks to the estimates of the previous sections, we will be able to improve the regularity of the solution of~\eqref{E:PDE}-\eqref{E:BC} by letting the system evolve for a time interval sufficiently large (with the only exception of the initial step, however, all time steps will be taken equal to $1$).

    Firstly, thanks to Corollary~\ref{cor:diss}, for any $t_0\in \mathbb R$, any symbol $\vect{g}$ as above and any bounded set $D \subset \myH0$, there exists $T = T(|D|) > 0$ such that
    \begin{equation*}
        \norm{\myH0}{\vect{z}(t)}^{2} + 2 \Lpnorm[1]{F(\psi(t))} \leqslant 1 + \Const (1 + M_{\vect{g}}(t_{0})) \leqslant \Const (1 + M_{\vect{g}}(t_{0})), \quad t \leqslant t_{0}, \quad \tau \leqslant t-T,
    \end{equation*}
    for any $\mathbf z_0\in D$. Moreover integrating~\eqref{E:basic_energy_estimate} and \eqref{deltainq}, for $t \leqslant t_0$
    \begin{equation*}
        \int_{t-1}^t \left(\|\mathbf z(s)\|^2_{\mathcal H_1}+\Lpnorm{\nabla\mu(s)}^2\right) \, {\rm d}s \leqslant \Const (1+M_{\vect{g}}(t_{0}))+\|\mathbf z(t-1)\|^2_{\mathcal H_0}+2|F(\psi(t-1))|_1,\quad \tau\leqslant t-1,
    \end{equation*}
    so that, provided that $\tau\leqslant t-1-T$, we deduce
    \begin{equation}\label{intH1t1}
        \int_{t-1}^t \left(\|\mathbf z(s)\|^2_{\mathcal H_1}+\Lpnorm{\nabla\mu(s)}^2\right) \, {\rm d}s \leqslant \Const (1+M_{\vect{g}}(t_{0}))
    \end{equation}
    as well as
    \begin{equation}\label{E:bounddataconst}
        A_{t,s}\leqslant \Const (1+M_{\vect{g}}(t_{0})), \qquad \tau+T\leqslant t-1 \leqslant s \leqslant t\leqslant t_0.
    \end{equation}
    It thus follows from Lemmata~\ref{lemmabase}, \ref{lemma0} and~\ref{lemmaint0}
    \begin{equation*}
        \int_{t-1}^t \left( \Lpnorm{\lapl \psi(s)}^4+\Lpnorm[p+3]{\lapl \psi(s)}^2+ \Lpnorm{\mu(s)}^2\right) {\rm d}s \leqslant \Const A^2_{t,t-1}\leqslant \Const (1+M^2_{\vect g}(t_0)),\quad \tau\leqslant t-1-T,
    \end{equation*}
    allowing to prove that the functions $h$ and $g$ in the differential inequality \eqref{E:muder}, for $\tau+T<t-1<t\leqslant t_0$, satisfy
    \begin{align*}
        \int_{t-1}^t h(s) {\rm d}s\leqslant \Const A_{t,t-1}^4\leqslant \Const (1+M^4_{\vect g}(t_0)), \qquad \int_{t-1}^t g(s) {\rm d}s\leqslant \Const A_{t,t-1}^3\leqslant \Const (1+M^3_{\vect g}(t_0)).
    \end{align*}
    Hence, by the Uniform Gronwall's lemma and \eqref{E:bounddataconst}, it follows
    \begin{equation}\label{E:mudiss}
        \Lpnorm{\mu(t)}^2 \leqslant \Const \left (1+M^3_{\vect g}(t_0)\right)e^{\Const \left (1+M^4_{\vect g}(t_0)\right) }, \qquad \tau+T+2\leqslant t\leqslant t_0
    \end{equation}
    as well as
    \begin{equation*}
        \int_{t-1}^t \Lpnorm{\lapl\mu(s)}^2 {\rm d}s \leqslant \Const \left (1+M^7_{\vect g}(t_0)\right)e^{\Const \left (1+M^4_{\vect g}(t_0)\right) },\qquad \tau+T+3\leqslant t\leqslant t_0.
    \end{equation*}
    Then the first claimed integral estimate follows from~\eqref{E:intfl8} and~\eqref{E:mudiss}, while the functions $h$ and $g$ in~\eqref{finaldiffineasy} satisfy
    \begin{align*}
       \int_{t-1}^{t} h(s) \, \mathrm{d}s \leqslant \Const \left (1+M^2_{\vect g}(t_0)\right)\qquad \int_{t-1}^{t} g(s) \, \mathrm{d}s \leqslant \Const \left (1+M^8_{\vect g}(t_0)\right) e^{\Const \left (1+M^4_{\vect g}(t_0)\right) },
    \end{align*}
    provided that $ \tau+T+3\leqslant t\leqslant t_0$. Therefore, applying the Uniform Gronwall's lemma to~\eqref{finaldiffineasy} we deduce
    \begin{equation*}
        \norm{\myH1}{\vect z(t)}^2 \leqslant \Const^2_{\vect g}(t_0), \qquad \tau+T+4\leqslant t\leqslant t_0,
    \end{equation*}
    where
    \begin{equation*}
        \Const^2_{\vect g}(t_0) \doteq \Const \left (1+M^8_{\vect g}(t_0)\right) e^{\Const \left (1+M^4_{\vect g}(t_0)\right) }.
    \end{equation*}
    Finally, provided that $\tau\leqslant t-5-T$, integrating \eqref{finaldiffine} over $(t-1,t)$ we obtain
    \begin{equation*}
        \int_{t-1}^t  \left(\Lpnorm{\lapl \vect{u}(s)}^{2} + \Lpnorm{\lapl^{2} \psi(s)}^{2}\right) {\rm d} s \leqslant \Const \left (1+M^{10}_{\vect g}(t_0)\right) e^{\Const \left (1+M^4_{\vect g}(t_0)\right) }.\qedhere
    \end{equation*}
\end{proof}

\section{Continuous dependence}\label{S:continuous dependence}

\noindent In this section we obtain continuous dependence estimates of the solutions w.r.t.\ initial data and forcing terms (see Lemmata \ref{lemma93} and
\ref{lemma94}).

In order to address the first issue and to simplify notation, throughout the section we indicate by symbols with no subscripts the difference between quantities denoted by subscripts~$1$ and~$2$, i.e.,
\begin{equation*}
    f \eqdef f_{1} - f_{2}.
\end{equation*}
From~\eqref{E:PDE}-\eqref{E:BC} we easily see that the difference between two solutions $\vect{z}_{1} = (\vect{u}_{1}, \psi_{1})$ and $\vect{z}_{2} = (\vect{u}_{2}, \psi_{2})$ satisfies the system
\begin{equation}\label{difference}
    \begin{cases}
        \pt \vect{u}+(\vect{u}\cdot \nabla)\vect{u}_1+(\vect{u}_2\cdot \nabla)\vect{u}-\nu \lapl \vect{u} = \mu_1 \nabla\psi_1-\mu_2\nabla\psi_2+\vect{g}\\
        \nabla\cdot \vect{u}=0\\
        \pt\psi+\vect{u}\cdot \nabla\psi_1+\vect{u}_2\cdot \nabla\psi=\lapl\mu\\
        \mu=-\lapl\psi+f(\psi_1)-f(\psi_2)\\
        \psi(\tau)=\psi_{01}-\psi_{02},\quad \vect{u}(\tau)=\vect{u}_{01}-\vect{u}_{02}
    \end{cases}
    \quad \text{in}\quad \Omega,
\end{equation}
with the boundary conditions
\begin{equation*}
    \vect{u} = \mathbf 0, \quad \dnu{\psi} = 0 , \quad \dnu{\mu} = 0 ,\qquad \text{on $\partial \Omega$.}
\end{equation*}

\begin{lemma}\label{lemma93}
    Let $\mathbf z_{01},\mathbf z_{02}\in \myH0$ be any pair of initial data and let $\g_1,\g_2 \in \Bochner{\Lp_{\loc}}{-\infty}{t}{\Lpvect_{\divfree}(\Omega)}$ be any pair of symbols. Then there exists a constant $\Const$ such that, if $\vect{z}_{i}(t)$, $i = 1, 2$, are the solutions of~\eqref{E:PDE}-\eqref{E:BC} with initial data $\vect{z}_{0i}$ at time $\tau$ and symbol $\vect{g}_{i}$, then the following estimates hold
    \begin{equation*}
        \norm{\myH0}{\vect{z}(t)}^{2} \leqslant \left( \norm{\myH0}{\vect{z}_{0}}^{2} + \int_{\tau}^{t} \Lpnorm{\vect{g}(s)}^{2} \, \mathrm{d}s \right) e^{\Const \left( \overline{\dataConst}^{6} + (t-\tau) \overline{\dataConst}^{5} \right)}
    \end{equation*}
    and
    \begin{align*}
        &\int_{\tau}^{t} \left( \Lpnorm{\nabla \vect{u}(s)}^{2} + \Lpnorm{\nabla \lapl \psi(s)}^{2} \right) \, \mathrm{d}s\\
        \leqslant &
        \Const \left( \norm{\myH0}{\vect{z}_{0}}^{2} + \int_{\tau}^{t} \Lpnorm{\vect{g}(s)}^{2} \, \mathrm{d}s \right)
        \left( \overline{\dataConst}^{6} + (t-\tau) \overline{\dataConst}^{5} \right)
        e^{\Const \left( \overline{\dataConst}^{6} + (t-\tau) \overline{\dataConst}^{5} \right)},
    \end{align*}
    where $\overline{\dataConst}$ is defined by $\overline{\dataConst} \eqdef {\dataConst}_{(1)} + {\dataConst}_{(2)}$ and ${\dataConst}_{(i)}$ is the quantity corresponding to the initial datum $\vect{z}_{i}$ and the forcing term $\vect{g}_{i}$, for $i=1,2$.
\end{lemma}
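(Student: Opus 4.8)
The plan is to close a Gronwall inequality for the natural $\myH0$--energy of the difference system~\eqref{difference}. Since $\psi_{01}$ and $\psi_{02}$ have zero mean, so does $\psi(t)=\psi_1(t)-\psi_2(t)$ for every $t$, whence $\norm{\myH0}{\vect z(t)}^{2}$ is equivalent to $E(t)\eqdef\Lpnorm{\vect u(t)}^{2}+\Lpnorm{\nabla\psi(t)}^{2}$. First I would test the first equation in~\eqref{difference} by $\vect u$ (using $\duality{(\vect u_2\cdot\nabla)\vect u}{\vect u}=0$) and the third one by $\mu$, and add the two identities; the two ``mixed'' coupling terms cancel because $\duality{\mu\nabla\psi_1}{\vect u}=\duality{(\vect u\cdot\nabla)\psi_1}{\mu}$, and $\duality{\pt\psi}{-\lapl\psi}=\tfrac12\timeder\Lpnorm{\nabla\psi}^{2}$, so that one is left with
\begin{multline*}
    \tfrac12\timeder E+\nu\Lpnorm{\nabla\vect u}^{2}+\Lpnorm{\nabla\mu}^{2}\\
    =-\duality{(\vect u\cdot\nabla)\vect u_{1}}{\vect u}+\duality{\mu_{2}\nabla\psi}{\vect u}-\duality{(\vect u_{2}\cdot\nabla)\psi}{\mu}-\duality{\pt\psi}{f(\psi_{1})-f(\psi_{2})}+\duality{\vect g}{\vect u}.
\end{multline*}
Through $\mu=-\lapl\psi+\bigl(f(\psi_1)-f(\psi_2)\bigr)$, the dissipation $\Lpnorm{\nabla\mu}^{2}$ also controls $\Lpnorm{\nabla\lapl\psi}^{2}$ up to lower order contributions, hence (in $2D$) every Lebesgue norm of $\nabla\psi$ and $\lapl\psi$; this is what will eventually yield the second, integrated estimate.

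The first three and the last terms on the right--hand side are routine: $|\duality{(\vect u\cdot\nabla)\vect u_{1}}{\vect u}|$ is handled by Ladyzhenskaja's inequality, $|\duality{\mu_{2}\nabla\psi}{\vect u}|$ and $|\duality{(\vect u_{2}\cdot\nabla)\psi}{\mu}|$ after an integration by parts (so that $\nabla\mu_2$, resp.\ $\nabla\mu$, is brought out and absorbed by the dissipation), and $|\duality{\vect g}{\vect u}|$ by Young's inequality. Each of them is bounded by a small fraction of $\nu\Lpnorm{\nabla\vect u}^{2}+\Lpnorm{\nabla\mu}^{2}$, plus $\tfrac12\Lpnorm{\vect g}^{2}$, plus a term $a(s)E(s)$ with $a\in L^{1}(\tau,t)$ and $\int_\tau^t a\leqslant\Const\,(\overline{\dataConst}^{2}+(t-\tau)\overline{\dataConst})$, the integrability following from the bounds of Lemma~\ref{lemmabase} on $\vect u_i$, $\nabla\psi_i$ and $\nabla\mu_i$.

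The crux is the potential term $-\duality{\pt\psi}{f(\psi_1)-f(\psi_2)}$. Using the decomposition of $f$ induced by Assumption~\eqref{Hp:splitting}, write $f(\psi_1)-f(\psi_2)=\bigl(F_{0}'(\psi_1)-F_{0}'(\psi_2)\bigr)-2\alpha\psi$. In the piece $2\alpha\duality{\pt\psi}{\psi}$ I would substitute $\pt\psi=\lapl\mu-(\vect u\cdot\nabla)\psi_1-(\vect u_2\cdot\nabla)\psi$ from the Cahn--Hilliard equation in~\eqref{difference} (rather than keep it as $\alpha\timeder\Lpnorm{\psi}^{2}$, which need not define a coercive energy), producing only terms absorbable into $\nu\Lpnorm{\nabla\vect u}^{2}+\Lpnorm{\nabla\mu}^{2}$ together with an $E$--term of coefficient $\Const(1+\Lpnorm[4]{\psi_1}^{2})$, integrable by Lemma~\ref{lemmabase}. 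For the monotone piece $-\duality{\pt\psi}{F_{0}'(\psi_1)-F_{0}'(\psi_2)}$ I would again replace $\pt\psi$ by the right--hand side of that equation; after an integration by parts its leading contribution is $\duality{\nabla\mu}{F_{0}''(\psi_1)\nabla\psi+\bigl(F_{0}''(\psi_1)-F_{0}''(\psi_2)\bigr)\nabla\psi_2}$, the remaining two being transport terms paired against $F_{0}'(\psi_1)-F_{0}'(\psi_2)$. The decisive point — and what forces Assumption~\eqref{Hp:polynomial_growth} into play — is that every factor $f'$, $f''$ occurring here must be estimated through the norms supplied by~\eqref{p+3} and Remark~\ref{rem:fp+3} (and through the higher integrability $f^{(k)}(\psi_i)\in\Bochner{\Lp}{\tau}{t}{\Lp[r](\Omega)}$ for every $r$, which follows from Lemma~\ref{lemmaint0} and $|f^{(k)}(y)|\leqslant\Const(1+|f(y)|)$), so that the degree of $F$ affects only the multiplicative constant and not the powers of $\overline{\dataConst}$; meanwhile $\nabla\psi$, $\nabla\mu$ and $\nabla\psi_2$ are put in the complementary Lebesgue spaces by Gagliardo--Nirenberg and Ladyzhenskaja, the top--order factors being absorbed into $\nu\Lpnorm{\nabla\vect u}^{2}+\Lpnorm{\nabla\mu}^{2}$ (equivalently $+\Lpnorm{\nabla\lapl\psi}^{2}$). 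Any ``excess'' power of $\Lpnorm{\nabla\psi}$, $\Lpnorm{\vect u}$ or $\Lpnorm{\lapl\psi}$ left over by this balancing is controlled \emph{not} by $E$ but by the a~priori $\Lp[\infty]$--in--time bounds for the single solutions, e.g.\ $\Lpnorm{\nabla\psi}\leqslant\Lpnorm{\nabla\psi_1}+\Lpnorm{\nabla\psi_2}\leqslant\Const\,\overline{\dataConst}^{1/2}$ by~\eqref{basic}, so that it is reabsorbed as a fixed power of $\overline{\dataConst}$ times a genuinely linear $E$--term. Collecting all the time integrals via Hölder's inequality and Lemma~\ref{lemmabase} (notably~\eqref{intDelta22} and~\eqref{intDelta24}), one arrives at $\timeder E+\nu\Lpnorm{\nabla\vect u}^{2}+\Lpnorm{\nabla\mu}^{2}\leqslant a(s)E+b(s)$ with $\int_\tau^t a\leqslant\Const(\overline{\dataConst}^{6}+(t-\tau)\overline{\dataConst}^{5})$ and $b$ controlled by $\Lpnorm{\vect g}^{2}$ alone. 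Gronwall's lemma gives the first estimate (with $\norm{\myH0}{\vect z_0}^{2}$ entering only through $E(\tau)$), and integrating the differential inequality once more, using the bound just obtained for $E$, gives the second one.

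I expect the main obstacle to be exactly this last step: obtaining, for $\duality{\nabla\mu}{\nabla(f(\psi_1)-f(\psi_2))}$ and the companion transport terms, a bound whose powers of $\overline{\dataConst}$ do \emph{not} degenerate as the degree $p+3$ of $F$ increases. This is feasible only because~\eqref{p+3} lets one trade arbitrarily high integrability of $f^{(k)}(\psi_i)$ against a single power of $\overline{\dataConst}$, and because $\nabla\psi$ may be placed in $\Lp[s]$ for $s$ as large as needed thanks to the control $\Lpnorm{\nabla\mu}^{2}\gtrsim\Lpnorm{\nabla\lapl\psi}^{2}$. One of the sub--terms arising here — the one carrying $\bigl(f'(\psi_1)-f'(\psi_2)\bigr)\nabla\psi_2$, and thus $f''$ — is precisely the one whose direct estimate breaks down for $p\in[1,2)$, which is why in that range the extra assumption $f^{(iv)}\leqslant\Const$ is imposed (cf.\ Remark~\ref{R:continuous_dependence_p=1}).
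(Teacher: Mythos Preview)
Your strategy differs from the paper's in one essential choice: you test the Cahn--Hilliard difference equation by $\mu$, whereas the paper tests by $-2\lapl\psi$. The latter immediately produces $\Lpnorm{\nabla\lapl\psi}^{2}$ as dissipation, and the nonlinear contribution is simply $\duality{\lapl\mu}{\lapl\psi}=\Lpnorm{\nabla\lapl\psi}^{2}+\duality{f'(\psi_1)\lapl\psi}{\lapl\psi}+\duality{(f'(\psi_1)-f'(\psi_2))\lapl\psi_2}{\lapl\psi}+\duality{f''(\psi_1)\nabla\psi(\nabla\psi_1+\nabla\psi_2)}{\lapl\psi}+\duality{(f''(\psi_1)-f''(\psi_2))|\nabla\psi_2|^{2}}{\lapl\psi}$. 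The first of these potential terms is handled by the lower bound $f'\geqslant-2\alpha$ from~\eqref{Hp:splitting}; there is no need for your separate treatment of the $-2\alpha\psi$ piece via substitution of $\dt\psi$. For the remaining three, the paper isolates a single clean device: the pointwise bound $|f^{(k)}(\psi_1)-f^{(k)}(\psi_2)|\leqslant\Const\bigl(1+F(\psi_1)^{(p+1-k)/(p+3)}+F(\psi_2)^{(p+1-k)/(p+3)}\bigr)|\psi|$, which after H\"older gives $\Lpnorm[(p+3)/(p+2-k)]{f^{(k)}(\psi_1)-f^{(k)}(\psi_2)}\leqslant\Const(1+\Lpnorm[1]{F(\psi_1)}+\Lpnorm[1]{F(\psi_2)})\Lpnorm[p+3]{\psi}$. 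This replaces the difference of derivatives of $f$ by a fixed power of $\overline{\dataConst}$ times $\Lpnorm{\nabla\psi}$, and is what makes the estimate $p$--uniform in structure; every term then closes against $\Lpnorm{\nabla\lapl\psi}^{2}$ by Young, the coefficient $h$ picking up at worst $(1+\Lpnorm[1]{F(\psi_i)}^{4})\Lpnorm{\lapl\psi_j}^{4}$ and $\Lpnorm[p+3]{\lapl\psi_2}^{2}$, whose time integral is $\Const(\overline{\dataConst}^{6}+(t-\tau)\overline{\dataConst}^{5})$ by Lemma~\ref{lemmabase} and Lemma~\ref{lemmaint0}. Your route via $\Lpnorm{\nabla\mu}^{2}$ and substitution of $\dt\psi$ should close as well, but it produces additional transport terms $\duality{(\vect u\cdot\nabla)\psi_1}{f(\psi_1)-f(\psi_2)}$, $\duality{(\vect u_2\cdot\nabla)\psi}{f(\psi_1)-f(\psi_2)}$ and forces you to convert $\Lpnorm{\nabla\mu}^{2}$ back into $\Lpnorm{\nabla\lapl\psi}^{2}$, each step requiring again the same polynomial--growth bookkeeping; the paper's choice simply avoids that detour.

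One correction: your closing remark about $f^{(iv)}$ and Remark~\ref{R:continuous_dependence_p=1} is misplaced. In Lemma~\ref{lemma93} only $f'(\psi_1)-f'(\psi_2)$ and $f''(\psi_1)-f''(\psi_2)$ occur, and the estimate above handles these for all $p\geqslant 1$. The constraint $p\geqslant 2$ (and the fallback hypothesis $f^{(iv)}\leqslant\Const$) enters only in the $\myH1$--continuous--dependence Lemma~\ref{lemma94}, where the term $\duality{(f'''(\psi_1)-f'''(\psi_2))\nabla\psi_2|\nabla\psi_2|^{2}}{\nabla\lapl\psi}$ appears.
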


\begin{proof}
    Recalling that, thanks to the incompressibility assumption on the velocity fields~$\vect{u}_{i}$, for $i = 1,2$, we have
    \begin{align*}
        & \duality{\mu_{i} \nabla \psi_{i}}{\vect{u}_{i}}\\
        = &- \duality{\lapl \psi_{i} \nabla \psi_{i}}{\vect{u}_{i}} + \duality{f(\psi_{i}) \nabla \psi_{i}}{ \vect{u}_{i}} = - \duality{\lapl \psi_{i} \nabla \psi_{i}}{\vect{u}_{i}} + \duality{\nabla F(\psi_{i})}{\vect{u}_{i}}\\
        = &- \duality{\lapl \psi_{i} \nabla \psi_{i}}{\vect{u}_{i}}.
    \end{align*}
    Moreover, the product of the first equation in~\eqref{difference} by $2\vect{u}$ gives
    \begin{equation}\label{differenceu1}
        \timeder \Lpnorm{\vect{u}}^{2}+2 \nu \Lpnorm{\nabla \vect{u}}^{2}+2\duality{ (\vect{u}\cdot \nabla)\vect{u}_1}{\vect{u}} = - 2\duality{\lapl\psi \nabla\psi_1}{\vect{u}} -2\duality{ \lapl\psi_2\nabla\psi}{\vect{u}}+2\duality{\vect{g}}{\vect{u}}.
    \end{equation}
    Noticing that
    \begin{align*}
        & \duality{\lapl\mu}{\lapl\psi} \\
        =&\duality{ \lapl [-\lapl\psi+f(\psi_1)-f(\psi_2)]}{\lapl\psi}\\
        =&\Lpnorm{\nabla\lapl\psi}^{2}+\duality{ f'(\psi_1)\lapl\psi}{\lapl\psi} +\duality{ [f'(\psi_1)-f'(\psi_2)]\lapl\psi_2}{\lapl\psi}\\
        & \quad {} + \duality{ f''(\psi_1) \nabla\psi (\nabla\psi_1+\nabla\psi_2)}{\lapl\psi}+\duality{ [f''(\psi_1)-f''(\psi_2)]|\nabla\psi_2|^{2}}{\lapl\psi}\\
        \geqslant & \Lpnorm{\nabla\lapl\psi}^{2}-2\alpha \Lpnorm{\lapl\psi}^{2} +\duality{ [f'(\psi_1)-f'(\psi_2)]\lapl\psi_2}{\lapl\psi}\\
        & \quad {} +\duality{ f''(\psi_1)\nabla\psi (\nabla\psi_1+\nabla\psi_2)}{\lapl\psi}+\duality{ [f''(\psi_1)-f''(\psi_2)]|\nabla\psi_2|^{2}}{\lapl\psi},
    \end{align*}
    the product of the third equation in~\eqref{difference} by $-2 \lapl \psi$ yields
    \begin{align*}
        & \timeder \Lpnorm{\nabla\psi}^{2}+2\Lpnorm{\nabla\lapl\psi}^{2}-2\duality{ (\vect{u}\cdot\nabla)\psi_1}{\lapl\psi}-2\duality{(\vect{u}_2\cdot\nabla)\psi}{\lapl\psi}\\
        \leqslant & 4\alpha \Lpnorm{\lapl\psi}^{2} - 2\duality{ [f'(\psi_1)-f'(\psi_2)]\lapl\psi_2}{\lapl\psi}\\
        & \quad -2\duality{ f''(\psi_1)\nabla\psi (\nabla\psi_1+\nabla\psi_2)}{\lapl\psi}-2\duality{ [f''(\psi_1)-f''(\psi_2)]|\nabla\psi_2|^{2}}{\lapl\psi}.
    \end{align*}
    Adding this last inequality to~\eqref{differenceu1} we have
    \begin{align}\label{differenceumu}
        & \timeder \left(\Lpnorm{\vect{u}}^{2}+ \Lpnorm{\nabla\psi}^{2}\right) +2\nu\Lpnorm{\nabla \vect{u}}^{2}+2\Lpnorm{\nabla\lapl\psi}^{2}\\
        \leqslant & 4\alpha \Lpnorm{\lapl\psi}^{2} - 2\duality{ [f'(\psi_1)-f'(\psi_2)]\lapl\psi_2}{\lapl\psi} - 2\duality{ f''(\psi_1) \nabla\psi (\nabla\psi_1+\nabla\psi_2)}{\lapl\psi}\nonumber\\
        & \quad {} - 2\duality{ [f''(\psi_1)-f''(\psi_2)]|\nabla\psi_2|^{2}}{\lapl\psi}+2\duality{ (\vect{u}_2\cdot\nabla)\psi}{\lapl\psi} \nonumber\\
        & \quad {} - 2\duality{ (\vect{u}\cdot\nabla)\psi}{\lapl\psi_2} - 2\duality{ (\vect{u}\cdot \nabla)\vect{u}_1}{\vect{u}} + 2\duality{\vect{g}}{\vect{u}}.\nonumber
    \end{align}
    Since $\Lpnorm{\lapl\psi}^{2} \leqslant \Lpnorm{\nabla\psi}\Lpnorm{\nabla\lapl\psi}$, we immediately obtain
    \begin{equation*}
        4\alpha \Lpnorm{\lapl\psi}^{2} \leqslant 4\alpha \Lpnorm{\nabla\psi}\Lpnorm{\nabla\lapl\psi} \leqslant \frac 15\Lpnorm{\nabla\lapl\psi}^{2}+\Const \Lpnorm{\nabla\psi}^{2}.
    \end{equation*}

    Before dealing with the terms arising from the double-well potential $F$, we introduce some useful estimates for terms of the form $|f^{(k)}(\psi_{1}) - f^{(k)}(\psi_{2})|$. Indeed from the fundamental theorem of calculus and by assumptions~\eqref{Hp:polynomial_growth} and~\eqref{Hp:splitting} we have
    \begin{align*}
        &\left| f^{(k)}(\psi_{1}) - f^{(k)}(\psi_{2}) \right|\\
        = & \left| \int_{\psi_{2}}^{\psi_{1}} f^{(k+1)}(s) \, \mathrm{d}s \right| \leqslant \int_{\psi_{1}}^{\psi_{2}} \left| f^{(k+1)}(s) \right| \, \mathrm{d}s \leqslant \Const \int_{\psi_{1}}^{\psi_{2}} \left( 1 + F(s)^{\sfrac{(p+1-k)}{(p+3)}} \right) \, \mathrm{d}s\\
        \leqslant & \Const \left( 1 + F(\psi_{1})^{\sfrac{(p+1-k)}{(p+3)}} + F(\psi_{2})^{\sfrac{(p+1-k)}{(p+3)}} \right) |\psi_{1} - \psi_{2}|.
    \end{align*}
    Moreover, we can estimate suitable $\Lp[q]$ norms of this difference as follows
    \begin{align}
    \label{fksmart}
        & \Lpnorm[\frac{p+3}{p+2-k}]{f^{(k)}(\psi_{1}) - f^{(k)}(\psi_{2})}\\
        \leqslant & \Const \left( \int_{\Omega} \left( 1 + F(\psi_{1}) + F(\psi_{2}) \right)^{\sfrac{(p+1-k)}{(p+2-k)}} |\psi_{1} - \psi_{2}|^{\sfrac{(p+3)}{(p+2-k)}} \, \mathrm{d}\vect{x} \right)^{\sfrac{(p+2-k)}{(p+3)}} \nonumber\\
        \leqslant & \Const \left( \int_{\Omega} \left( 1 + F(\psi_{1}) + F(\psi_{2}) \right) \, \mathrm{d}\vect{x} \right)^{\sfrac{(p+1-k)}{(p+3)}} \left( \int_{\Omega} |\psi_{1} - \psi_{2}|^{p+3} \, \mathrm{d}\vect{x} \right)^{\sfrac{1}{(p+3)}}\nonumber\\
        \leqslant & \Const \left( 1 + \Lpnorm[1]{F(\psi_{1})}^{\sfrac{(p+1-k)}{(p+3)}} + \Lpnorm[1]{F(\psi_{2})}^{\sfrac{(p+1-k)}{(p+3)}} \right) \Lpnorm[p+3]{\psi}.
        \nonumber
    \end{align}
    In light of this bound, we can now resume the estimation of the terms in the right hand side of~\eqref{differenceumu}. We have
    \begin{align*}
        & \left| 2 \duality{\left( f'(\psi_{1})-f'(\psi_{2}) \right) \lapl \psi_{2}}{\lapl \psi} \right| \\
        \leqslant & 2 \Lpnorm[\frac{p+3}{p+1}]{f'(\psi_{1})-f'(\psi_{2})} \Lpnorm[p+3]{\lapl \psi_{2}} \Lpnorm[p+3]{\lapl \psi}\\
        \leqslant & \Const \left( 1 + \Lpnorm[1]{F(\psi_{1})}^{\sfrac{p}{(p+3)}} + \Lpnorm[1]{F(\psi_{2})}^{\sfrac{p}{(p+3)}} \right) \Lpnorm{\nabla \psi} \Lpnorm[p+3]{\lapl \psi_{2}} \Lpnorm{\nabla \lapl \psi}\\
        \leqslant & \frac{1}{5} \Lpnorm{\nabla \lapl \psi}^{2} + \Const \left( 1 + \Lpnorm[1]{F(\psi_{1})}^{\sfrac{p}{(p+3)}} + \Lpnorm[1]{F(\psi_{2})}^{\sfrac{p}{(p+3)}} \right)^{2} \Lpnorm[p+3]{\lapl \psi_{2}}^{2} \Lpnorm{\nabla \psi}^{2}.
    \end{align*}
    Analogously we also obtain
    \begin{align*}
        & \left| -2 \duality{ \left( f''(\psi_{1}) - f''(\psi_{2}) \right) |\nabla\psi_2|^{2}}{\lapl \psi} \right| \\
        \leqslant & 2 \Lpnorm[\frac{p+3}{p}]{f''(\psi_{1}) - f''(\psi_{2})}^{} \Lpnorm[p+3]{\nabla \psi_{2}}^{2} \Lpnorm[p+3]{\lapl \psi}^{}\\
        \leqslant & \Const \left( 1 + \Lpnorm[1]{F(\psi_{1})}^{\sfrac{(p-1)}{(p+3)}} + \Lpnorm[1]{F(\psi_{2})}^{\sfrac{(p-1)}{(p+3)}} \right) \Lpnorm{\nabla \psi}^{} \Lpnorm{\lapl \psi_{2}}^{2} \Lpnorm{\nabla \lapl \psi}^{}\\
        \leqslant & \frac{1}{5} \Lpnorm{\nabla \lapl \psi}^{2} + \Const \left( 1 + \Lpnorm[1]{F(\psi_1)}^{\sfrac{(p-1)}{(p+3)}} + \Lpnorm[1]{F(\psi_2)}^{\sfrac{(p-1)}{(p+3)}} \right)^{2} \Lpnorm{\lapl \psi_{2}}^{4} \Lpnorm{\nabla \psi}^{2}.
    \end{align*}
    The last term involving the potential $F$ and its derivatives can be dealt with in a similar way. Again the assumptions on $F$ provide
    \begin{align*}
        & \left| -2 \duality{f''(\psi_{1}) \nabla \psi (\nabla \psi_{1} + \nabla \psi_{2})}{\lapl \psi} \right| \\
        \leqslant & 2 \Lpnorm[\frac{p+3}{p}]{f''(\psi_{1})} \Lpnorm[p+3]{\nabla \psi} \left( \Lpnorm[p+3]{\nabla \psi_{1}} + \Lpnorm[p+3]{\nabla \psi_{2}} \right) \Lpnorm[p+3]{\lapl \psi}\\
        \leqslant & \Const \left( 1 + \Lpnorm[1]{F(\psi_{1})}^{\sfrac{p}{(p+3)}} \right) \Lpnorm{\lapl \psi} \left( \Lpnorm{\lapl \psi_{1}} + \Lpnorm{\lapl \psi_{2}} \right) \Lpnorm{\nabla \lapl \psi}\\
        \leqslant & \Const \left( 1 + \Lpnorm[1]{F(\psi_{1})}^{\sfrac{p}{(p+3)}} \right) \Lpnorm{\nabla \psi}^{\sfrac{1}{2}} \Lpnorm{\nabla \lapl \psi}^{\sfrac{1}{2}} \left( \Lpnorm{\lapl \psi_{1}}^{} + \Lpnorm{\lapl \psi_{2}}^{} \right) \Lpnorm{\nabla \lapl \psi}^{}\\
        \leqslant & \frac{1}{5} \Lpnorm{\nabla \lapl \psi}^{2} + \Const \left( 1 + \Lpnorm[1]{F(\psi_{1})}^{4} \right) \left( \Lpnorm{\lapl \psi_{1}}^{4} + \Lpnorm{\lapl \psi_{2}}^{4} \right) \Lpnorm{\nabla \psi}^{2}.
    \end{align*}
    We also bound the last four terms on the right hand side of~\eqref{differenceumu}, which arise from the linear momentum equation. Thanks to Agmon's inequality, we have $\Lpnorm[\infty]{\nabla\psi}^{2} \leqslant \Const \Lpnorm{\nabla\psi}\Lpnorm{\nabla\lapl\psi}$, and thus
    \begin{align*}
        & \left| 2\duality{ (\vect{u}_2\cdot\nabla)\psi}{\lapl\psi} - 2\duality{ (\vect{u}\cdot\nabla)\psi}{\lapl\psi_2} \right|\\
        \leqslant & 2 \Lpnorm{\vect{u}_2}\Lpnorm[\infty]{\nabla\psi} \Lpnorm{\lapl\psi} + 2\Lpnorm{\vect{u}} |\nabla\psi|_\infty
        \Lpnorm{\lapl\psi_2}\\
        \leqslant & \Const \Lpnorm{\vect{u}_2}\Lpnorm{\nabla\psi}\Lpnorm{\nabla\lapl\psi} + \Const \Lpnorm{\vect{u}}
        \Lpnorm{\nabla\psi}^{\sfrac 12}\Lpnorm{\nabla\lapl\psi}^{\sfrac12}  \Lpnorm{\lapl\psi_2}\\
        \leqslant & \frac{1}{5} \Lpnorm{\nabla\lapl\psi}^{2} + \frac{\nu}{2} \Lpnorm{\nabla \vect{u}}^{2} + \Const ( \Lpnorm{\vect{u}_2}^{2}+ \Lpnorm{\lapl\psi_2}^4) \Lpnorm{\nabla\psi}^{2}
    \end{align*}
    holds. Finally, by Ladyzhenskaja inequality, it follows
    \begin{align*}
        & \left| -2\duality{ (\vect{u}\cdot \nabla)\vect{u}_1}{\vect{u}} + 2\duality{\vect{g}}{\vect{u}} \right|\\
        \leqslant & \Const \Lpnorm[4]{\vect{u}}^{2} \Lpnorm{\nabla\vect{u}_{1}} + \Const \Lpnorm{\vect{g}} \Lpnorm{\nabla \vect{u}}\\
        \leqslant & \Const \Lpnorm{\vect{u}} \Lpnorm{\nabla \vect{u}} \Lpnorm{\nabla \vect{u}_{1}} + \Const \Lpnorm{\vect{g}} \Lpnorm{\nabla \vect{u}}\\
        \leqslant & \frac{\nu}{2} \Lpnorm{\nabla \vect{u}}^{2} + \Const \Lpnorm{\nabla \vect{u}_{1}}^{2} |\vect{u}|_2^{2}+\Const |\vect{g}|_2^{2}.
    \end{align*}
    Replacing the above estimates in~\eqref{differenceumu}, we see that $\norm{\myH0}{\vect z(t)}^{2}=\Lpnorm{\vect{u}(t)}^{2}+ \Lpnorm{\nabla\psi(t)}^{2}$ satisfies
    \begin{equation}\label{E:differenceH0}
        \timeder \norm{\myH0}{\vect{z}}^{2} + \nu \Lpnorm{\nabla \vect{u}}^{2} + \Lpnorm{\nabla \lapl \psi}^{2} \leqslant \Const \Lpnorm{\vect{g}}^{2} + h \norm{\myH0}{\vect{z}}^{2},
    \end{equation}
    where $h$ is given by
    \begin{align*}
        h \eqdef & \Const \left( 1 + \Lpnorm{\nabla \vect{u}_{1}}^{2} + \Lpnorm{\nabla \vect{u}_{2}}^{2} \right)\\
        &{} + \Const \left( 1 + \Lpnorm[1]{F(\psi_{1})}^{4} + \Lpnorm[1]{F(\psi_{2})}^{4} \right) \left( \Lpnorm{\lapl \psi_{1}}^{4} + \Lpnorm{\lapl \psi_{2}}^{4} + \Lpnorm[p+3]{\lapl \psi_{2}}^{2} \right).
    \end{align*}
    By the results of the previous section we deduce
    \begin{equation*}
        \int_{\tau}^{t} h(s) \, \mathrm{d}s \leqslant \Const \left(\overline{\dataConst}^{6} + (t-\tau) \overline{\dataConst}^{5} \right)
    \end{equation*}
    so that Gronwall's lemma finally gives the claimed estimates.
\end{proof}

In order to apply the abstract framework described in Section~\ref{S:theory}, we also need the following higher order continuous dependence estimate.
\begin{lemma}\label{lemma94}
    Let $\mathbf z_{01},\mathbf z_{02} \in \myH1$ be any pair of initial data so that $\mu_{0i} \eqdef f(\psi_{0i}) - \lapl \psi_{0i} \in \Lp(\Omega)$, $i = 1,2$ and let $\vect{g}_{1},\vect{g}_{2} \in \Bochner{\Lp_{\loc}}{-\infty}{t}{\Lpvect_{\divfree}(\Omega)}$ be any pair of symbols. Then there exists a constant $\Const$ such that, if $\vect{z}_{i}(t)$, $i = 1, 2$ are the solutions of~\eqref{E:PDE}-\eqref{E:BC} with initial data $\vect{z}_{0i}$ at time $\tau$ and symbol $\vect{g}_{i}$, then the following estimate holds
    \begin{equation*}
        \norm{\myH1}{\vect{z}(t)}^{2} \leqslant e^{Q(\dataConst, t-\tau) \left( 1 + \norm{\myH1}{\vect{z}_{01}}^{2} + \norm{\myH1}{\vect{z}_{02}}^{2} + \Lpnorm{\mu_{01}}^{2} + \Lpnorm{\mu_{02}}^{2} \right)} \left( \norm{\myH1}{\vect{z}_{0}}^{2} + \int_{\tau}^{t} \Lpnorm{\vect{g}(s)}^{2} \, \mathrm{d}s \right),
    \end{equation*}
    where $Q$ is a nonnegative increasing function of its arguments.
\end{lemma}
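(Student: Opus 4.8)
The plan is to run, for the difference system~\eqref{difference}, the same test--function scheme used in the proof of Lemma~\ref{lemma:ordine superiore}, keeping the potential under control through the ``smart'' increment bounds~\eqref{fksmart} together with~\eqref{p+3}, exactly as in the proof of Lemma~\ref{lemma93}. Concretely, I would multiply the first equation of~\eqref{difference} by $2A\vect{u}=-2\mathbb{P}\lapl\vect{u}$ and the third one by $2\lapl^{2}\psi$ and add. Integrations by parts and the boundary conditions produce $\timeder\bigl(\Lpnorm{\nabla\vect{u}}^{2}+\Lpnorm{\lapl\psi}^{2}\bigr)+2\nu\Lpnorm{A\vect{u}}^{2}+2\Lpnorm{\lapl^{2}\psi}^{2}$ on the left; using $\mu_{i}\nabla\psi_{i}=-\lapl\psi_{i}\nabla\psi_{i}+\nabla F(\psi_{i})$ and the fact that $A\vect{u}$ is $L^{2}$--orthogonal to gradients, the capillary forcing reduces to $-2\duality{\lapl\psi\,\nabla\psi_{1}}{A\vect{u}}-2\duality{\lapl\psi_{2}\,\nabla\psi}{A\vect{u}}$, while the nonlinear term $2\duality{\lapl(f(\psi_{1})-f(\psi_{2}))}{\lapl^{2}\psi}$ I would expand as in~\eqref{differenceumu}, splitting it into the ``diagonal'' pieces $f'(\psi_{1})\lapl\psi$ and $f''(\psi_{1})\nabla\psi\cdot(\nabla\psi_{1}+\nabla\psi_{2})$ plus the increments $[f'(\psi_{1})-f'(\psi_{2})]\lapl\psi_{2}$ and $[f''(\psi_{1})-f''(\psi_{2})]|\nabla\psi_{2}|^{2}$.

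I would then estimate every term so as to reach a differential inequality of the form $\timeder\norm{\myH1}{\vect{z}}^{2}\le h\,\norm{\myH1}{\vect{z}}^{2}+\Const\Lpnorm{\vect{g}}^{2}$. The hydrodynamic and transport contributions ($\duality{(\vect{u}\cdot\nabla)\vect{u}_{1}}{A\vect{u}}$, $\duality{(\vect{u}_{2}\cdot\nabla)\vect{u}}{A\vect{u}}$, $\duality{\lapl\psi\,\nabla\psi_{1}}{A\vect{u}}$, $\duality{\lapl\psi_{2}\,\nabla\psi}{A\vect{u}}$, $\duality{(\vect{u}\cdot\nabla)\psi_{1}}{\lapl^{2}\psi}$, $\duality{(\vect{u}_{2}\cdot\nabla)\psi}{\lapl^{2}\psi}$ and $\duality{\vect{g}}{A\vect{u}}$) are handled by the two--dimensional Ladyzhenskaya and Agmon inequalities and by Young's inequality, absorbing the top-order norms $\Lpnorm{A\vect{u}}$ and $\Lpnorm{\lapl^{2}\psi}$ into the left-hand side (using also $\Lpnorm{\nabla\lapl\psi}^{2}\le\varepsilon\Lpnorm{\lapl^{2}\psi}^{2}+\Const_{\varepsilon}\Lpnorm{\lapl\psi}^{2}$), and bounding all remaining norms of the difference by $\norm{\myH1}{\vect{z}}$ through Poincaré (since $\psi$ is mean free and $\vect{u}$ vanishes on $\partial\Omega$, so that $\norm{\myH0}{\vect{z}}\le\Const\norm{\myH1}{\vect{z}}$); here Lemma~\ref{lemma93} may also be invoked. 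For the potential terms, since $\lapl^{2}\psi$ lies only in $L^{2}$, I would integrate the pairing once more by parts, so that $\lapl\psi$ is turned into $\nabla\lapl\psi$ and the product rule together with the telescoping splittings of $f^{(k)}(\psi_{1})-f^{(k)}(\psi_{2})$ produce finitely many terms; each of them carries a factor $f^{(k)}(\psi_{1})$, estimated in the exponent $\tfrac{p+3}{p+2-k}$ via~\eqref{p+3}, or an increment $f^{(k)}(\psi_{1})-f^{(k)}(\psi_{2})$, estimated in the exponent $\tfrac{p+3}{p+2-k}$ via~\eqref{fksmart}, so that the dependence on the shape of $F$ is confined to $\Lpnorm[1]{F(\psi_{i})}\le\dataConst$ and to powers of $\Lpnorm[p+3]{\psi}\le\Const\Lpnorm{\nabla\psi}\le\Const\norm{\myH1}{\vect{z}}$; the surviving factors $\nabla\psi$, $\lapl\psi$, $\nabla\lapl\psi$, $\nabla\psi_{i}$, $\lapl\psi_{i}$, $\nabla\lapl\psi_{i}$ are then controlled by the $L^{2}$--into--$L^{q}$ Sobolev embedding and Gagliardo--Nirenberg interpolation in $2D$ (the convexity of $F_{0}$ from Assumption~\eqref{Hp:splitting} may alternatively be used to absorb the worst diagonal contribution, as in the proof of Lemma~\ref{lemmabase}).

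This presents $h$ as a finite sum of products of $\Lpnorm{\nabla\vect{u}_{i}}^{2}$, $\Lpnorm{A\vect{u}_{i}}^{2}$, $\Lpnorm{\lapl\psi_{i}}^{2}$, $\Lpnorm{\lapl\psi_{i}}^{4}$, $\Lpnorm{\lapl^{2}\psi_{i}}^{2}$, $\Lpnorm[p+3]{\lapl\psi_{2}}^{2}$, $\Lpnorm[4]{f(\psi_{i})}^{4}$ and $\Lpnorm[1]{F(\psi_{i})}$, for $i=1,2$. Applying Lemmata~\ref{lemmabase}, \ref{lemmaint0}, \ref{lemmafurtherintmu} and~\ref{lemma:ordine superiore} to each of $\vect{z}_{1}$ and $\vect{z}_{2}$ separately --- combined with the elementary splitting $\int_{\tau}^{t}\Lpnorm{\lapl\psi_{i}(s)}^{4}\,\mathrm{d}s\le\bigl(\sup_{[\tau,t]}\Lpnorm{\lapl\psi_{i}}^{2}\bigr)\int_{\tau}^{t}\Lpnorm{\lapl\psi_{i}(s)}^{2}\,\mathrm{d}s$ and with~\eqref{intDelta22} and~\eqref{effe44} --- one obtains $\int_{\tau}^{t}h(s)\,\mathrm{d}s\le Q\bigl(\dataConst,t-\tau\bigr)\bigl(1+\norm{\myH1}{\vect{z}_{01}}^{2}+\norm{\myH1}{\vect{z}_{02}}^{2}+\Lpnorm{\mu_{01}}^{2}+\Lpnorm{\mu_{02}}^{2}\bigr)$ for some nonnegative increasing $Q$, the key point being that every factor entering $h$, once integrated in time, is at most \emph{linear} in the $\myH1$-norms of the data and in $\Lpnorm{\mu_{0i}}^{2}$. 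Gronwall's lemma then yields the claimed estimate, with the multiplicative constant in front of $\int_{\tau}^{t}\Lpnorm{\vect{g}}^{2}$ absorbed into the exponential factor. The main obstacle is precisely this double bookkeeping of the potential terms: never leaving the exponents $\tfrac{p+3}{p+2-k}$ (so that the growth of $F$ only shows up through $\dataConst$ and the multiplicative constants, consistently with the uniform structure of Section~\ref{S:higher_order}), while at the same time arranging the interpolations so that, after the time integration, the coefficient $\int_{\tau}^{t}h$ depends only linearly on $\norm{\myH1}{\vect{z}_{0i}}^{2}$ and $\Lpnorm{\mu_{0i}}^{2}$, which forces a systematic use of $L^{\infty}(L^{2})$--times--$L^{1}(L^{2})$ splittings for the higher powers of the norms of $\psi_{i}$ and $\vect{u}_{i}$.
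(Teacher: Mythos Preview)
Your plan is correct and is essentially the paper's own proof: test the difference system with $2A\vect u$ and $2\lapl^{2}\psi$, integrate the potential pairing once by parts so that everything is against $\nabla\lapl\psi$, control the increments $f^{(k)}(\psi_{1})-f^{(k)}(\psi_{2})$ via~\eqref{fksmart} and the diagonal factors $f^{(k)}(\psi_{i})$ via~\eqref{p+3}, and close by Gronwall after checking that $\int_{\tau}^{t}h$ is linear in $\norm{\myH1}{\vect z_{0i}}^{2}+\Lpnorm{\mu_{0i}}^{2}$.

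Two small points where your sketch and the paper diverge in detail. First, once you integrate by parts the expansion of $\lapl(f(\psi_{1})-f(\psi_{2}))$ from~\eqref{differenceumu}, the product rule generates terms up to $f'''$ (both diagonal and increment type, in particular $[f'''(\psi_{1})-f'''(\psi_{2})]\,\nabla\psi_{2}\,|\nabla\psi_{2}|^{2}$), which is precisely why the paper assumes $p\geqslant 2$; your list of pieces stops at $f''$, so make sure the full set is written out. Second, the paper's actual coefficient $h$ contains $\Lpnorm{\nabla\vect u_{1}}^{4}$ and $\Lpnorm{\lapl\psi_{2}}^{6}$ rather than $\Lpnorm{A\vect u_{i}}^{2}$ or $\Lpnorm[4]{f(\psi_{i})}^{4}$; the linearity you invoke is obtained exactly through the splitting you describe, e.g.\ $\int_{\tau}^{t}\Lpnorm{\lapl\psi_{2}}^{6}\leqslant\sup_{[\tau,t]}\Lpnorm{\lapl\psi_{2}}^{2}\cdot\int_{\tau}^{t}\Lpnorm{\lapl\psi_{2}}^{4}$, with the first factor linear in the data by~\eqref{E:H1_boundedness} and the second controlled by $\dataConst$ alone via~\eqref{intDelta24}.
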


\begin{proof}
    We start by multiplying the first equation in~\eqref{difference} by $2A\vect{u}=-2\mathbb{P}\lapl \vect{u}$, getting
    \begin{align*}
        & \timeder \Lpnorm{\nabla \vect{u}}^{2} + 2\nu \Lpnorm{A\vect{u}}^{2}\\
        =& -2\duality{ (\vect{u}\cdot \nabla)\vect{u}_1}{A\vect{u}}-2\duality{(\vect{u}_2\cdot \nabla)\vect{u}}{A\vect{u}} - 2\duality{\lapl\psi_1 \nabla\psi_1}{A\vect{u}} +  2\duality{\lapl\psi_2\nabla\psi_2}{A\vect{u}} + 2\duality{\vect{g}}{A\vect{u}}.
    \end{align*}
    The product of the third equation in~\eqref{difference} by $2\lapl^{2}\psi$, after an integration by parts, provides
    \begin{align*}
        & \ddt \Lpnorm{\lapl\psi}^{2}+2\Lpnorm{\lapl^{2}\psi}^{2}\\
        = & -2\duality{ \vect{u}\cdot \nabla\psi_1}{\lapl^{2}\psi} - 2\duality{  \vect{u}_2\cdot \nabla\psi}{\lapl^{2}\psi} - 2\duality{  f'(\psi_1)\nabla \lapl\psi}{\nabla\lapl\psi}\\
        & \quad {} - 2\duality{  [f'(\psi_1)-f'(\psi_2)]\nabla \lapl \psi_2}{\nabla\lapl \psi} - 2\duality{  f''(\psi_1)\nabla\psi \lapl\psi_1}{\nabla\lapl\psi}\\
        & \quad {} - 2\duality{ [f''(\psi_1)-f''(\psi_2)]\nabla\psi_2 \lapl\psi_1}{\nabla\lapl\psi}
        - 2\duality{  f''(\psi_2)\nabla\psi_2 \lapl\psi}{\nabla\lapl\psi}\\
        & \quad {} - 4 \duality{f''(\psi_{1}) \nabla^{2}\psi_{1} \nabla \psi}{\nabla \lapl \psi} - 4 \duality{(f''(\psi_{1}) - f''(\psi_{2})) \nabla^{2} \psi_{1} \nabla \psi_{2}}{\nabla \lapl \psi}\\
        & \quad {} - 4 \duality{f''(\psi_{2}) \nabla^{2} \psi \nabla \psi_{2}}{\nabla \lapl \psi} - 2 \duality{f'''(\psi_{1}) \nabla \psi \vectnorm{\nabla \psi_{1}}^{2}}{\nabla \lapl \psi}\\
        & \quad {} - 2 \duality{f'''(\psi_{1}) \nabla \psi_{2} \nabla \psi \cdot (\nabla \psi_{1} + \nabla \psi_{2})}{\nabla \lapl \psi}\\
        & \quad -2\l [f'''(\psi_1)-f'''(\psi_2)]\nabla\psi_2 |\nabla\psi_2|^{2},\nabla\lapl\psi\r.
    \end{align*}
    Adding together the two equations, by Assumption~\eqref{Hp:splitting} we obtain
    \begin{align*}
        & \timeder \left( \| \vect{u} \|_{\Hsvect_{0, \divfree}(\Omega)}^{2} + \Lpnorm{\lapl\psi}^{2} \right) + 2\nu | A \vect{u}|^{2}_{\Lpvect_{\divfree}(\Omega)}+2\Lpnorm{\lapl^{2} \psi}^{2}\\
        \leqslant & -2 \duality{ (\vect{u}\cdot \nabla)\vect{u}_1}{A\vect{u}} - 2\duality{(\vect{u}_2\cdot \nabla)\vect{u}}{A\vect{u}}
         - 2\duality{\lapl\psi \nabla\psi_1}{A\vect{u}}-2\duality{\lapl\psi_2\nabla\psi}{A\vect{u}}\\
        & \quad {} + 2\duality{\vect{g}}{A\vect{u}} - 2\duality{ \vect{u}\cdot \nabla\psi_1}{\lapl^{2}\psi} - 2\duality{ \vect{u}_2\cdot \nabla\psi}{\lapl^{2}\psi}+4\alpha \Lpnorm{\nabla\lapl\psi}^{2}\\
        & \quad {} - 2\duality{ [f'(\psi_1)-f'(\psi_2)]\nabla \lapl \psi_2}{\nabla\lapl \psi} - 2\duality{ f''(\psi_1)\nabla\psi \lapl\psi_1}{\nabla\lapl\psi}\\
        & \quad {} - 2\duality{ [f''(\psi_1)-f''(\psi_2)]\nabla\psi_2 \lapl\psi_1}{\nabla\lapl\psi} - 2\duality{ f''(\psi_2)\nabla\psi_2 \lapl\psi}{\nabla\lapl\psi}\\
        & \quad {} - 4 \duality{f''(\psi_{1}) \nabla^{2}\psi_{1} \nabla \psi}{\nabla \lapl \psi} - 4 \duality{(f''(\psi_{1}) - f''(\psi_{2})) \nabla^{2} \psi_{1} \nabla \psi_{2}}{\nabla \lapl \psi}\\
        & \quad {} - 4 \duality{f''(\psi_{2}) \nabla^{2} \psi \nabla \psi_{2}}{\nabla \lapl \psi} - 2 \duality{f'''(\psi_{1}) \nabla \psi \vectnorm{\nabla \psi_{1}}^{2}}{\nabla \lapl \psi}\\
        & \quad {}
       - 2 \duality{f'''(\psi_{1}) \nabla \psi_{2} \nabla \psi \cdot (\nabla \psi_{1} + \nabla \psi_{2})}{\nabla \lapl \psi}\\
         & \quad -2\duality{ [f'''(\psi_1)-f'''(\psi_2)]\nabla\psi_2 |\nabla\psi_2|^{2}}{\nabla\lapl\psi}.
    \end{align*}

    We now show that all the eighteen terms on the right hand side of the last inequality can be bounded by
    \begin{equation*}
        h \left(\Lpnorm{\nabla \vect{u}}^{2} +  \Lpnorm{\lapl \psi}^{2}\right),
    \end{equation*}
    where $h$ is an integrable quantity. Standard computations for the Navier-Stokes equation lead to
    \begin{align*}
        & \left| -2\duality{ (\vect{u}\cdot \nabla)\vect{u}_1}{A\vect{u}} - 2\duality{(\vect{u}_2\cdot \nabla)\vect{u}}{A\vect{u}} \right|\\
        \leqslant & \Const \Lpnorm{\vect{u}}^{\sfrac{1}{2}} \Lpnorm{A\vect{u}}^{\sfrac{1}{2}} \Lpnorm{\nabla \vect{u}_1}^{} \Lpnorm{A\vect{u}}^{} + \Const \Lpnorm{\vect{u}_2}^{\sfrac{1}{2}} \Lpnorm{\nabla \vect{u}_2}^{\sfrac{1}{2}} \Lpnorm{\nabla \vect{u}}^{\sfrac{1}{2}} \Lpnorm{A\vect{u}}^{\sfrac{3}{2}}\\
        \leqslant & \frac{\nu}{3} \Lpnorm{A\vect{u}}^{2}+\Const \Lpnorm{\nabla \vect{u}_{1}}^{4} \Lpnorm{\vect{u}}^{2} + \Const \Lpnorm{\vect{u}_2}^{2} \Lpnorm{\nabla \vect{u}_{2}}^{2} \Lpnorm{\nabla \vect{u}}^{2}.
    \end{align*}
    Exploiting Agmon's inequality and the interpolation inequality $\Lpnorm[\infty]{\phi} \leqslant C \Lpnorm{\phi}^{2/3}\norm{\Hs[3]}{\phi}^{1/3}$, we can bound the following three terms
    \begin{align*}
        & \left| -2\duality{\lapl\psi \nabla\psi_1}{A\vect{u}}-2\duality{\lapl\psi_2\nabla\psi}{A\vect{u}}+2\duality{\vect{g}}{A\vect{u}} \right|\\
        \leqslant & 2 [\Lpnorm[\infty]{\lapl\psi} \Lpnorm{\nabla\psi_1} +\Lpnorm{\lapl\psi_2}\Lpnorm[\infty]{\nabla\psi} +\Lpnorm{\vect{g}}]\Lpnorm{A\vect{u}}\\
        \leqslant & 2 [\Lpnorm{\lapl\psi}^{\sfrac12} \Lpnorm{\lapl^{2}\psi}^{\sfrac12} \Lpnorm{\nabla\psi_1}^{} + \Lpnorm{\lapl\psi_2}^{} \Lpnorm{\nabla\psi}^{\sfrac{2}{3}} \Lpnorm{\lapl^{2}\psi}^{\sfrac{1}{3}} + \Lpnorm{\vect{g}}^{}]\Lpnorm{A\vect{u}}^{}\\
        \leqslant & \frac{\nu}{3} \Lpnorm{A\vect{u}}^{2} + \frac{1}{7} \Lpnorm{\lapl^{2}\psi}^{2}+\Const \Lpnorm{\nabla\psi_1}^4 \Lpnorm{\lapl\psi}^{2}+\Const \Lpnorm{\lapl\psi_2}^3 \Lpnorm{\nabla\psi}^{2}+\Const \Lpnorm{\vect{g}}^{2}
    \end{align*}
    as well as the next two
    \begin{align*}
        & \left| -2\duality{ \vect{u}\cdot \nabla\psi_1}{\lapl^{2}\psi}-2\duality{ \vect{u}_2\cdot \nabla\psi}{\lapl^{2}\psi} \right|\\
        \leqslant & 2 \Lpnorm[\infty]{\vect{u}} \Lpnorm{\nabla\psi_{1}} \Lpnorm{\lapl^{2}\psi} + 2 \Lpnorm{\vect{u}_2} \Lpnorm[\infty]{\nabla\psi} \Lpnorm{\lapl^{2}\psi}\\
        \leqslant & \Const  \Lpnorm{\vect{u}}^{\sfrac{1}{2}} \Lpnorm{A \vect{u}}^{\sfrac{1}{2}} \Lpnorm{\nabla \psi_{1}}^{} \Lpnorm{\lapl^{2} \psi}^{} + \Const \Lpnorm{\vect{u}_{2}} \Lpnorm{\nabla\psi}^{\sfrac{2}{3}}\Lpnorm{\lapl^{2}\psi}^{\sfrac{4}{3}}\\
        \leqslant & \frac{\nu}{3} \Lpnorm{A\vect{u}}^{2}+ \frac{1}{7} \Lpnorm{\lapl^{2}\psi}^{2} + \Const \Lpnorm{\nabla\psi_1}^{4} \Lpnorm{\vect{u}}^{2} + \Const \Lpnorm{\vect{u}_{2}}^{3} \Lpnorm{\nabla\psi}^{2}.
    \end{align*}
    The terms arising from the double well potential can be treated using similar techniques. By interpolation, we have
    \begin{equation*}
        4\alpha |\nabla\lapl\psi|^{2}_{2} \leqslant \frac{1}{7} \Lpnorm{\lapl^{2}\psi}^{2}+\Const \Lpnorm{\lapl\psi}^{2},
    \end{equation*}
    while, by ~\eqref{fksmart}, we obtain
    \begin{align*}
        & \left| -2\duality{ [f'(\psi_1)-f'(\psi_2)]\nabla\lapl \psi_2}{\nabla\lapl\psi} \right|\\
        \leqslant & 2 \Lpnorm[\frac{p+3}{p+1}]{f'(\psi_1)-f'(\psi_2)}  \Lpnorm[p+3]{\nabla \lapl \psi_{2}} \Lpnorm[p+3]{\nabla \lapl \psi}\\
        \leqslant & \Const \left( 1 + \Lpnorm[1]{F(\psi_1)}^{\sfrac{p}{(p+3)}}+\Lpnorm[1]{F(\psi_2)}^{\sfrac{p}{(p+3)}} \right) \Lpnorm{\nabla \psi}^{} \Lpnorm{\lapl^{2} \psi_{2}}^{} \Lpnorm{\lapl^{2} \psi}^{}\\
        \leqslant & \frac{1}{7} \Lpnorm{\lapl^{2} \psi}^{2}+ \Const \left( 1 + \Lpnorm[1]{F(\psi_{1})}^{} + \Lpnorm[1]{F(\psi_{2})}^{} \right)^{2} \Lpnorm{\lapl^{2} \psi_{2}}^{2} \Lpnorm{\nabla \psi}^{2}.
    \end{align*}
    Using also Korn's inequality, from Assumptions~\eqref{Hp:growth} and~\eqref{Hp:coercivity} we deduce
    \begin{align*}
        & \left| -2 \duality{f''(\psi_1) \nabla \psi \lapl \psi_{1}}{\nabla \lapl \psi} - 4 \duality{f''(\psi_{1}) \nabla^{2} \psi_{1} \nabla \psi}{\nabla \lapl \psi} \right|\\
        \leqslant & \Const \Lpnorm[\frac{p+3}{p}]{f''(\psi_{1})} \Lpnorm[p+3]{\nabla \psi} \Lpnorm[p+3]{\lapl \psi_{1}} \Lpnorm[p+3]{\nabla \lapl \psi}\\
        \leqslant & \Const (1+ \Lpnorm[1]{F(\psi_{1})}^{\sfrac{p}{(p+3)}}) \Lpnorm{\lapl \psi} \Lpnorm[p+3]{\lapl \psi_{1}} \Lpnorm{\lapl^{2} \psi}\\
        \leqslant & \frac{1}{7} \Lpnorm{\lapl^{2} \psi}^{} + \Const \left( 1 + \Lpnorm[1]{F(\psi_{1})}^{} \right)^{2} \Lpnorm[p+3]{\lapl \psi_{1}}^{2} \Lpnorm{\lapl \psi}^{2}.
    \end{align*}
    Arguing as in the proof of Lemma~\ref{lemma93}, namely, exploiting ~\eqref{fksmart} for the first two terms, and Assumptions~\eqref{Hp:growth} and~\eqref{Hp:coercivity} for the second two, as well as Korn's inequality again, we obtain
    \begin{align*}
        & \left| -2 \duality{ [f''(\psi_1)-f''(\psi_2)]\nabla\psi_2 \lapl\psi_1}{\nabla\lapl\psi} - 4 \duality{ \left( f''(\psi_{1}) - f''(\psi_{2}) \right) \nabla^{2} \psi_{1} \nabla \psi_{2}}{\nabla \lapl \psi} \right.\\
        &\quad \left. {} - 2 \duality{ f''(\psi_2)\nabla\psi_2 \lapl\psi}{\nabla\lapl\psi} - 4 \duality{f''(\psi_{2}) \nabla^{2} \psi \nabla \psi_{2}}{\nabla \lapl \psi}\right|\\
        \leqslant & \Const \left( 1 + \Lpnorm[1]{F(\psi_{1})}^{\sfrac{(p-1)}{(p+3)}} + \Lpnorm[1]{F(\psi_{2})}^{\sfrac{(p-1)}{(p+3)}} \right) \Lpnorm{\lapl \psi_{2}}^{} \Lpnorm[p+3]{\lapl \psi_{1}}^{} \Lpnorm{\nabla \psi}^{} \Lpnorm{\lapl^{2} \psi}^{}\\
        & \quad {} + \Const \left( 1 + \Lpnorm[1]{F(\psi_{2})}^{\sfrac{p}{(p+3)}} \right) \Lpnorm{\lapl \psi_{2}}^{} \Lpnorm{\nabla \lapl \psi}^{} \Lpnorm{\lapl^{2} \psi}^{}\\
        \leqslant & \frac{1}{7} \Lpnorm{\lapl^{2} \psi}^{2} + \Const \left( 1 + \Lpnorm[1]{F(\psi_{1})}^{2} + \Lpnorm[1]{F(\psi_{2})}^{2} \right) \Lpnorm{\lapl \psi_{2}}^{2}  \Lpnorm[p+3]{\lapl \psi_{1}}^{2} \Lpnorm{\nabla \psi}^{2}\\
        & \quad {} + \Const \left( 1 + \Lpnorm[1]{F(\psi_{2})}^{4} \right) \Lpnorm{\lapl \psi_{2}}^{4} \Lpnorm{\lapl \psi}^{2}.
    \end{align*}
    We are left to consider
    \begin{align}\label{E:continuous_dependence_last_term}
        & \left| -2 \duality{ f'''(\psi_1)\nabla\psi |\nabla\psi_1|^{2}}{ \nabla\lapl\psi} - 2 \duality{f'''(\psi_{1}) \nabla \psi_{2} \, \nabla \psi \cdot \left(\nabla \psi_{1} + \nabla \psi_{2} \right)}{\nabla \lapl \psi} \right.\\
        & \quad \left. {} - 2\duality{ [f'''(\psi_1)-f'''(\psi_2)]\nabla\psi_2 |\nabla\psi_2|^{2}}{\nabla\lapl\psi} \right| \notag\\
        \leqslant & \Const ( 1 + \Lpnorm[1]{F(\psi_1)}^{\sfrac{(p-1)}{(p+3)}}) (\Lpnorm{\lapl\psi_1}^{2} + \Lpnorm{\lapl\psi_{1}}^{} \Lpnorm{|\lapl\psi_{2}}^{} +\Lpnorm{\lapl\psi_{2}}^{2}) \Lpnorm{\lapl \psi}^{} \Lpnorm{\lapl^{2}\psi}^{} \notag\\
        & \quad {} + \Const \left( 1 + \Lpnorm[1]{F(\psi_{1})}^{\sfrac{(p-2)}{(p+3)}} + \Lpnorm[1]{F(\psi_{2})}^{\sfrac{(p-2)}{(p+3)}} \right) \Lpnorm{\lapl\psi_2}^3 \Lpnorm{\nabla\psi}
        \Lpnorm{\lapl^{2}\psi} \notag\\
        \leqslant & \frac{1}{7} \Lpnorm{\lapl^{2}\psi}^{2} + \Const (1+\Lpnorm[1]{F(\psi_1)}^{2}) (\Lpnorm{\lapl\psi_1}^4  +\Lpnorm{\lapl\psi_2}^4 ) \Lpnorm{\lapl\psi}^{2}\notag\\
        & \quad {} + \Const \left( 1 + \Lpnorm[1]{F(\psi_1)}^{2} + \Lpnorm[1]{F(\psi_2)}^{2} \right) \Lpnorm{\lapl\psi_2}^6 \Lpnorm{\nabla\psi}^{2}. \notag
    \end{align}

    \begin{remark}\label{R:continuous_dependence_p=1}
        In the case $p \in [1,2)$, under the assumption $f^{(iv)}(y)$ bounded for $y \in \mathbb{R}$ we can still derive the estimate for the term
        \begin{equation*}
            \duality{|f'''(\psi_{1}) - f'''(\psi_{2})| \nabla \psi_{2} |\nabla \psi_{2}|^{2}}{\nabla \lapl \psi} \leqslant \Const \Lpnorm{\nabla \psi}^{} \Lpnorm{\lapl \psi_{2}}^{3} \Lpnorm[p+3]{\nabla \lapl \psi}^{},
        \end{equation*}
        which gives the same result as above.
    \end{remark}

    From the above inequalities, collecting terms we obtain
    \begin{align*}
        & \timeder \left( \Lpnorm{\nabla \vect{u}}^{2} + \Lpnorm{\lapl\psi}^{2} \right) + \nu \Lpnorm{A \vect{u}}^{2} + \Lpnorm{\lapl^{2} \psi}^{2}\\
        \leqslant & \Const \Lpnorm{\vect{g}}^{2} + \Const \left( \Lpnorm{\nabla \psi_{1}}^{4} + \Lpnorm{\nabla \vect{u}_{1}}^{4} + \Lpnorm{\vect{u}_{2}}^{2} \Lpnorm{\nabla \vect{u}_{2}}^{2} \right)\Lpnorm{\nabla \vect{u}}^{2} \\
        & \quad {} + \Const \left( \Lpnorm{\vect{u}_{2}}^{3} + \Lpnorm{\lapl \psi_{2}}^{3} + \left( 1 + \Lpnorm[1]{F(\psi_{1})}^{2} + \Lpnorm[1]{F(\psi_{2})}^{2} \right) \left( \Lpnorm{\lapl^{2} \psi_{2}}^{2} + \Lpnorm{\lapl \psi_{2}}^{6} + \Lpnorm{\lapl \psi_{2}}^{2}\Lpnorm[p+3]{\lapl \psi_{1}}^{2} \right) \right) \Lpnorm{\nabla \psi}^{2}\\
        & \quad {} + \Const \big( 1 + \Lpnorm{\nabla \psi_{1}}^{4}+\left( 1 + \Lpnorm[1]{F(\psi_{1})}^{2}\right) \left( \Lpnorm{\lapl \psi_{1}}^{4} + \Lpnorm{\lapl \psi_{2}}^{4} + \Lpnorm[p+3]{\lapl \psi_{1}}^{2} \right) + \left( 1 + \Lpnorm[1]{F(\psi_{2})}^{4} \right) \Lpnorm{\lapl \psi_{2}}^{4} \big) \Lpnorm{\lapl \psi}^{2}.
    \end{align*}
    Denoting by
    \begin{align*}
        h& =\Const \left( 1+\Lpnorm{\nabla \psi_{1}}^{4}+\Lpnorm{\vect{u}_{2}}^{3}+ \Lpnorm{\nabla \vect{u}_{1}}^{4} + \Lpnorm{\vect{u}_{2}}^{2} \Lpnorm{\nabla \vect{u}_{2}}^{2}  + \Lpnorm{\lapl \psi_{2}}^{3}\right.\\
        & \qquad    \left.+ \left( 1 + \Lpnorm[1]{F(\psi_{1})}^{2} + \Lpnorm[1]{F(\psi_{2})}^{2} \right) \left( \Lpnorm{\lapl^{2} \psi_{2}}^{2} + \Lpnorm{\lapl \psi_{2}}^{6} + \Lpnorm{\lapl \psi_{2}}^{2}\Lpnorm[p+3]{\lapl \psi_{1}}^{2} \right) \right.\\
        & \qquad {} \left.+\left( 1 + \Lpnorm[1]{F(\psi_{1})}^{2}\right) \left( \Lpnorm{\lapl \psi_{1}}^{4} + \Lpnorm{\lapl \psi_{2}}^{4} + \Lpnorm[p+3]{\lapl \psi_{1}}^{2} \right) + \left( 1 + \Lpnorm[1]{F(\psi_{2})}^{4} \right) \Lpnorm{\lapl \psi_{2}}^{4} \right) ,
    \end{align*}
    the above differential inequality reads as
    \begin{equation}\label{E:dipalta}
        \timeder \norm{\myH1}{\vect{z}(t)}^{2}\leqslant h(t)\norm{\myH1}{\vect{z}(t)}^{2}+\Const \Lpnorm{\vect{g}(t)}^{2}
    \end{equation}
    and depends on $p$ only through the constants $\Const$ included in the definition of $h$. Therefore, the estimate obtained by Gronwall's lemma below has the structure
    \begin{equation*}
        \norm{\myH1}{\vect{z}(t)}^{2} \leqslant e^{Q(\overline{\dataConst}, t-\tau) \left( 1 + \norm{\myH1}{\vect{z}_{01}}^{2} + \norm{\myH1}{\vect{z}_{02}}^{2} +\Lpnorm{\mu_{01}}^2 + \Lpnorm{\mu_{02}}^2 \right)} \left( \norm{\myH1}{\vect{z}_{0}}^{2} + \int_{\tau}^{t} \Lpnorm{\vect{g}(s)}^{2} \, \mathrm{d}s \right)
    \end{equation*}
    where, for any potential $F$ satisfying the assumptions~\eqref{Hp:regularity}--\eqref{Hp:polynomial_growth}, the function $Q$ depends on $\overline{\dataConst}$ only through some of its powers, which, in particular, do not depend on the growth exponent $p$ (i.e.\ the shape) of $F$.
\end{proof}

\section{Time regularity}\label{S:time regularity}

In this section we evaluate the distance in $\myH0$ between the solution and the initial datum in terms of the time span (Lemma \ref{lemma92}), and we show a smoothing property for difference of solutions (Lemma \ref{lemma95}). This will be crucial to show that Assumptions \eqref{H:continuity_forcing} and \eqref{H:time_continuity} in Theorems \ref{T:continuous_time_attractor} and \ref{T:process_attractor} hold true for system~\eqref{E:PDE}.

\begin{lemma}\label{lemma92}
    Given any symbol $\vect g$ satisfying \eqref{Hp:gL2loc} and \eqref{Hp:gM2}, there exists a positive constant $\Const$ such that the solution $\vect z(t)$, departing at time $\tau$ from an arbitrary initial datum $\vect z_0\in\myH1$, so that $\mu_{0} \in \Lp(\Omega)$ satisfies
    \begin{align*}
        & \norm{\myH0}{\vect{z}(t)-\vect{z}_0}^2\\
        \leqslant & \sqrt{t-\tau} \,\, \left( \norm{\myH1}{\vect z_{0}}^{2} + \Lpnorm{\mu_{0}}^{2} + \dataConst^{3} + (t-\tau) \dataConst \right) \left( \dataConst^{8} + (t-\tau) \dataConst^{7} \right) e^{\Const \left( \dataConst^{4} + (t-\tau) \dataConst^{3} \right)},
    \end{align*}
    for $\tau \leqslant t \leqslant t_{0}$.
\end{lemma}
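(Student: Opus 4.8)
The plan is to write $\vect{z}(t)-\vect{z}_{0}=\int_{\tau}^{t}\dt{\vect{z}}(s)\,\mathrm{d}s$ and to estimate the two contributions to $\norm{\myH0}{\vect{z}(t)-\vect{z}_{0}}^{2}=\Lpnorm{\vect{u}(t)-\vect{u}_{0}}^{2}+\Lpnorm{\nabla(\psi(t)-\psi_{0})}^{2}$ separately, trading one power of $t-\tau$ for a square root via a Cauchy--Schwarz in time and paying the remaining regularity with the $\myH1$-bound of Lemma~\ref{lemma:ordine superiore} and the $\Lp$-bound on $\dt{\psi}$ from~\eqref{intptuL2}.

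For the order parameter I would use the interpolation inequality $\Lpnorm{\nabla w}\leqslant\Const\Lpnorm{w}^{1/2}\norm{\Hs[2]}{w}^{1/2}$ with $w=\psi(t)-\psi_{0}$, obtaining $\Lpnorm{\nabla(\psi(t)-\psi_{0})}^{2}\leqslant\Const\Lpnorm{\psi(t)-\psi_{0}}\big(\norm{\Hs[2]}{\psi(t)}+\norm{\Hs[2]}{\psi_{0}}\big)$. Here $\Lpnorm{\psi(t)-\psi_{0}}=\Lpnorm{\int_{\tau}^{t}\dt{\psi}(s)\,\mathrm{d}s}\leqslant\sqrt{t-\tau}\,\big(\int_{\tau}^{t}\Lpnorm{\dt{\psi}(s)}^{2}\,\mathrm{d}s\big)^{1/2}$, the last integral being bounded by~\eqref{intptuL2}, while $\norm{\Hs[2]}{\psi(t)}\sim\Lpnorm{\lapl\psi(t)}\leqslant\norm{\myH1}{\vect{z}(t)}$ is bounded by~\eqref{E:H1_boundedness} and $\norm{\Hs[2]}{\psi_{0}}\leqslant\Const\norm{\myH1}{\vect{z}_{0}}$.

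For the velocity I would instead invoke the elementary inequality $\Lpnorm{\vect{w}}^{2}\leqslant\norm{\Hsvect[-1]_{\divfree}(\Omega)}{\vect{w}}\,\norm{\Hsvect_{0,\divfree}(\Omega)}{\vect{w}}$ with $\vect{w}=\vect{u}(t)-\vect{u}_{0}$. The factor $\norm{\Hsvect_{0,\divfree}(\Omega)}{\vect{u}(t)-\vect{u}_{0}}$ is controlled by $\norm{\myH1}{\vect{z}(t)}+\norm{\myH1}{\vect{z}_{0}}$ through~\eqref{E:H1_boundedness}, while for the other I would write $\norm{\Hsvect[-1]_{\divfree}(\Omega)}{\vect{u}(t)-\vect{u}_{0}}\leqslant\sqrt{t-\tau}\,\big(\int_{\tau}^{t}\norm{\Hsvect[-1]_{\divfree}(\Omega)}{\dt{\vect{u}}(s)}^{2}\,\mathrm{d}s\big)^{1/2}$ and read $\dt{\vect{u}}$ off the first equation of~\eqref{E:PDE} as $\dt{\vect{u}}=-\mathbb{P}[(\vect{u}\cdot\nabla)\vect{u}]+\nu\lapl\vect{u}+\mathbb{P}[\mu\nabla\psi]+\mathbb{P}\vect{g}$, with $\mu\nabla\psi=-\lapl\psi\,\nabla\psi$ modulo a gradient. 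Testing against $\vect{v}\in\Hsvect_{0,\divfree}(\Omega)$ and using the $2D$ interpolation/Ladyzhenskaja inequalities yields $\norm{\Hsvect[-1]_{\divfree}(\Omega)}{\dt{\vect{u}}}\leqslant\Const\big(\Lpnorm{\vect{u}}\Lpnorm{\nabla\vect{u}}+\Lpnorm{\nabla\vect{u}}+\Lpnorm{\nabla\psi}^{1/2}\Lpnorm{\lapl\psi}^{3/2}+\Lpnorm{\vect{g}}\big)$; squaring and integrating, the resulting time integrals are dominated, via the $\Lp[\infty]$-bounds on $\Lpnorm{\vect{u}}$ and $\Lpnorm{\nabla\psi}$ and the integral estimates~\eqref{basic}, \eqref{intDelta22} and~\eqref{intDelta24} of Lemma~\ref{lemmabase}, by a quantity of the form $\Const\dataConst^{2}\big(1+(t-\tau)\big)$.

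Finally I would assemble the pieces: collecting the two contributions, bounding $\big(\dataConst^{a}+(t-\tau)\dataConst^{a-1}\big)^{1/2}\leqslant\dataConst^{a/2}+(t-\tau)^{1/2}\dataConst^{(a-1)/2}$, using $\dataConst\geqslant1$ to absorb lower powers (in particular $(\,\cdot\,)^{1/2}\leqslant(\,\cdot\,)$ for the first factor, which is $\geqslant1$), and noting that each surviving $(t-\tau)^{1/2}$ combines with the outer $\sqrt{t-\tau}$ into a full power $t-\tau$. One then checks that the largest power of $\dataConst$ appearing — produced by the leading terms of~\eqref{intptuL2} and~\eqref{E:H1_boundedness} — is $\dataConst^{9/2}\leqslant\dataConst^{8}$, that every mixed term falls below $(\dataConst^{8}+(t-\tau)\dataConst^{7})$, and that the exponential factor is exactly the one $e^{\Const(\dataConst^{4}+(t-\tau)\dataConst^{3})}$ inherited from~\eqref{intptuL2}; this reproduces the claimed inequality. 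The only part needing genuine care, as opposed to bookkeeping, is to estimate $\dt{\vect{u}}$ in $\Hsvect[-1]_{\divfree}(\Omega)$ rather than in $\Lp(\Omega)$, so that its time integral is controlled by the already-uniform basic energy estimates alone; the rest is purely a matter of tracking how the products of square roots collapse into the single factor $(\dataConst^{8}+(t-\tau)\dataConst^{7})$.
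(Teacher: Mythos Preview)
Your proposal is correct. For the order parameter you follow exactly the paper's argument: interpolate $\Lpnorm{\nabla(\psi(t)-\psi_{0})}^{2}$ between $\Lp$ and $\Hs[2]$, control the $\Lp$-difference by $\sqrt{t-\tau}\,\bigl(\int_{\tau}^{t}\Lpnorm{\dt{\psi}}^{2}\bigr)^{1/2}$ via~\eqref{intptuL2}, and bound the $\Hs[2]$-factor by~\eqref{E:H1_boundedness}.

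For the velocity, however, you take a genuinely different route. The paper multiplies the momentum equation by $2\dt{\vect{u}}$, obtaining the energy identity
\[
\nu\,\timeder\Lpnorm{\nabla\vect{u}}^{2}+2\Lpnorm{\dt{\vect{u}}}^{2}=-2\duality{(\vect{u}\cdot\nabla)\vect{u}}{\dt{\vect{u}}}+2\duality{\mu\nabla\psi}{\dt{\vect{u}}}+2\duality{\vect{g}}{\dt{\vect{u}}},
\]
and from this derives an $L^{2}(\tau,t;\Lpvect_{\divfree})$ bound on $\dt{\vect{u}}$; to close it one needs the integral estimate $\int_{\tau}^{t}\Lpnorm{\lapl\vect{u}}^{2}$ from the second part of Lemma~\ref{lemma:ordine superiore}. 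You instead interpolate $\Lpnorm{\vect{u}(t)-\vect{u}_{0}}^{2}$ between $\Hsvect[-1]_{\divfree}$ and $\Hsvect_{0,\divfree}$, read $\dt{\vect{u}}$ directly off the equation and estimate it in $\Hsvect[-1]_{\divfree}$ using only the basic energy bounds of Lemma~\ref{lemmabase}, and pay for the $\Hsvect_{0,\divfree}$ factor with the pointwise bound~\eqref{E:H1_boundedness}. Both routes invoke Lemma~\ref{lemma:ordine superiore}, but different halves of it: the paper the integral $\Hsvect[2]$ estimate, you the $\Lp[\infty]_{t}(\myH1)$ estimate. Your approach is a bit more elementary (no new energy identity, and $\dt{\vect{u}}\in L^{2}_{t}\Hsvect[-1]_{\divfree}$ is already part of the definition of weak solution), and in fact produces smaller powers of $\dataConst$ before they are majorised by $\dataConst^{8}$; the paper's approach yields the stronger intermediate information $\dt{\vect{u}}\in L^{2}_{t}\Lpvect_{\divfree}$ and keeps the argument parallel to the one for $\psi$.
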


\begin{proof}
    The different features of the Navier-Stokes and the Cahn-Hilliard equations force to handle separately the two variables. We preliminarily observe that, denoting the solution and the initial datum as $\vect z(t)=(\vect u(t),\psi(t))$ and $\vect z_0=(\vect u_0,\psi_0)$, respectively,
    \begin{align*}
        \Lpnorm{\vect u(t)-\vect u_0} &\leqslant \int_\tau^t \Lpnorm{\pt \vect u (s)} \, \mathrm{d}s \leqslant \sqrt{t-\tau}\,\, \| \pt \vect u \|_{L^2(\tau,t;\Lpvect_{\divfree}(\Omega))},
    \end{align*}
    meaning that we only need to properly bound the last norm. The product of the first equation in~\eqref{E:PDE} by $2\pt \u$ gives
    \begin{equation}\label{ptu}
        \nu \timeder \Lpnorm{\nabla \vect u}^2+2\Lpnorm{\pt \vect u}^2 =
        -2\duality{\vect u\cdot\nabla)\vect u}{\pt \vect u}+2\duality{ \mu\nabla\psi}{\pt\vect u}
        +2\duality{\vect g}{\pt \vect u}.
    \end{equation}
    Here, having observed that
    \begin{equation*}
        \left| 2 \duality{\mu \nabla\psi}{\dt{\vect{u}}} \right| = \left| 2 \duality{\psi \nabla \mu}{\dt{\vect{u}}} \right| \leqslant 2 \Lpnorm{\nabla \mu} \Lpnorm[\infty]{ \psi} \Lpnorm{\dt{\vect{u}}} \leqslant \Const \Lpnorm{\nabla \mu}^{} \Lpnorm{\psi}^{\sfrac{1}{2}} \Lpnorm{\lapl \psi}^{\sfrac{1}{2}} \Lpnorm{\dt{\vect{u}}}^{},
    \end{equation*}
    the right hand side can be controlled as
    \begin{align*}
        & \left| -2\duality{(\vect{u}\cdot\nabla) \vect {u}}{\dt{\vect{u}}} + 2 \duality{\mu \nabla\psi}{\dt{\vect{u}}}^{} + 2 \duality{\vect {g}}{\dt{\vect{u}}} \right|\\
        \leqslant & \Lpnorm{\dt{\vect u}}^{2} + \Const \Lpnorm{\vect u}^{} \Lpnorm{\nabla \vect{u}}^{2} |\vect u|_{\Hsvect[2]_{0, \divfree}(\Omega)} + \Const \Lpnorm{\nabla \mu}^{2} \Lpnorm{\psi} \Lpnorm{\lapl \psi}^{}
         +\Const\Lpnorm{\vect g}^2.
    \end{align*}
    Replacing this estimate in the differential equality above, we have
    \begin{equation*}
        \nu \timeder \Lpnorm{\nabla \vect u}^2 + \Lpnorm{\dt{\vect{u}}}^2 \leqslant \Const \left( \Lpnorm{\vect u} \Lpnorm{\nabla \vect{u}}^{2} |\vect u|_{\Hsvect[2]_{0, \divfree}(\Omega)}+
        \Lpnorm{\nabla\mu}^2 \Lpnorm{\psi} \Lpnorm{\lapl \psi} +\Lpnorm{\vect g}^2\right),
    \end{equation*}
    thus, integrating in time over $(\tau,t)$, thanks to Lemmas \ref{lemmabase} and \ref{lemma:ordine superiore}, we deduce
    \begin{align*}
        & \int_\tau^t \Lpnorm{\dt{\vect{u}(s)}}^2 \, \mathrm{d}s\\
        \leqslant & \left( \nu\Lpnorm{\nabla\vect{u}(\tau)}^2 +\Const \int_\tau^t \Lpnorm{\vect g(s)}^2 \mathrm{d}s +\Const \int_\tau^t [ \Lpnorm{\nabla\mu(s)}^{2} \norm{\Hs}{\psi(s)}^{} \norm{\Hs[2]}{\psi(s)}^{} + \Lpnorm{\vect u(s)}^{2} |\vect{u}(s)|^{2}_{\Hsvect[2]_{0, \divfree}(\Omega)} ] \, \mathbf{d}s \right)\\
        \leqslant & \Const \dataConst \left( \norm{\myH1}{\vect z(\tau)}^2+\Lpnorm{\mu(\tau)}^{2}  + \dataConst^{3} + (t-\tau) \dataConst \right) \left( \dataConst^{7} + (t-\tau) \dataConst^{6} \right) e^{\Const \left( \dataConst^{4} + (t-\tau) \dataConst^{3} \right)},
    \end{align*}
    which  provides the desired estimate. We now turn our attention to the order parameter. By interpolation, exploiting~\eqref{intptuL2} and Lemma~\ref{lemma:ordine superiore} again, we obtain
    \begin{align*}
        & \norm{\Hs(\Omega)}{\psi(t,\tau)-\psi_0}^2\\
        \leqslant & \Lpnorm{\psi(t,\tau)-\psi_0} \norm{\Hs[2](\Omega)}{\psi(t,\tau)-\psi_0} \\
        \leqslant & \Const \sqrt{t-\tau} \, \Big(\int_\tau^t \Lpnorm{\pt\psi(s,\tau)}^2\, \mathrm{d}s \Big)^{\sfrac12} \, \sup_{s\in [\tau,t]} \norm{\Hs[2](\Omega)}{\psi(s)}\\
        \leqslant & \Const \sqrt{t-\tau}\, \left( \norm{\myH1}{\vect z(\tau)}^2+\Lpnorm{\mu(\tau)}^{2}  + \dataConst^{3} + (t-\tau) \dataConst \right) \left( \dataConst^{5} + (t-\tau) \dataConst^{4} \right) e^{\Const \left( \dataConst^{4} + (t-\tau) \dataConst^{3} \right)}.\qedhere
    \end{align*}
\end{proof}

The following smoothing property is crucial to show that our problem fits in the theoretical setting of~\cite{Langa2010}, which was presented in Section~\ref{S:theory}.
\begin{lemma}\label{lemma95}
    There exists a positive function $Q(\cdot,\cdot)$, increasing in both arguments, such that, given a pair of symbols $\vect g_1,\vect g_2$ satisfying~\eqref{Hp:gL2loc} and~\eqref{Hp:gM2} and any pair of initial data $\vect z_{01},\vect z_{02}\in \myH1$ so that $\mu_{0i} \in \Lp(\Omega)$, $i=1,2$, there holds
    \begin{align*}
        & (t-\tau)\norm{\myH1}{\vect{z}(t)}^2\\
        \leqslant & \left(\norm{\myH0}{\vect z_{0}}^2+\Const (1+t-\tau) \int_\tau^t \Lpnorm{\vect{g}(s)}^2 \, {\rm d}s \right) e^{Q(\overline{\dataConst}, t-\tau) \left( 1 + \norm{\myH1}{\vect{z}_{01}}^{2} + \norm{\myH1}{\vect{z}_{02}}^{2} + \Lpnorm{\mu_{01}}^{2} + \Lpnorm{\mu_{02}}^{2} \right)}
    \end{align*}
    where $\vect z_i(t)$ stands for the solution to problem \eqref{E:PDE}-\eqref{E:BC} corresponding to symbol $\vect g_i$ originating at time $\tau$ from the initial datum $\vect z_{i0}$.
\end{lemma}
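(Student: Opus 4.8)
The plan is to combine the $\myH1$ differential inequality \eqref{E:dipalta} established in the proof of Lemma~\ref{lemma94} with the $\myH0$ continuous dependence of Lemma~\ref{lemma93}, using the standard parabolic weight $(t-\tau)$ in order to trade one spatial derivative against a factor vanishing at the initial time. Since $\vect z_{01},\vect z_{02}\in\myH1$ with $\mu_{0i}\in\Lp(\Omega)$, the two solutions enjoy the higher regularity of Lemma~\ref{lemma:ordine superiore}, so the inequality $\timeder\norm{\myH1}{\vect z(t)}^2\leqslant h(t)\norm{\myH1}{\vect z(t)}^2+\Const\Lpnorm{\vect g(t)}^2$ of \eqref{E:dipalta}, with $h$ as in the proof of Lemma~\ref{lemma94}, is at our disposal. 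First I would set $y(t)\eqdef(t-\tau)\norm{\myH1}{\vect z(t)}^2$; from $\timeder\bigl((t-\tau)\norm{\myH1}{\vect z}^2\bigr)=\norm{\myH1}{\vect z}^2+(t-\tau)\timeder\norm{\myH1}{\vect z}^2$ one gets
\[
y'(t)\leqslant h(t)\,y(t)+\norm{\myH1}{\vect z(t)}^2+\Const(t-\tau)\Lpnorm{\vect g(t)}^2,\qquad y(\tau)=0,
\]
and Gronwall's lemma then yields
\[
y(t)\leqslant\left(\int_\tau^t\norm{\myH1}{\vect z(s)}^2\,\mathrm{d}s+\Const(t-\tau)\int_\tau^t\Lpnorm{\vect g(s)}^2\,\mathrm{d}s\right)e^{\int_\tau^t h(s)\,\mathrm{d}s}.
\]
Note that it is precisely the weight $(t-\tau)$ that kills the initial value $\norm{\myH1}{\vect z(\tau)}^2$, replacing it by the time integral of the $\myH1$ norm, which by the smoothing effect can be controlled only through the $\myH0$ norm of $\vect z_0$.

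Next I would estimate the two integrals on the right. Since $\psi$ is mean free, $\Lpnorm{\lapl\psi}\leqslant\Const\Lpnorm{\nabla\lapl\psi}$, hence $\norm{\myH1}{\vect z}^2=\Lpnorm{\nabla\vect u}^2+\Lpnorm{\lapl\psi}^2\leqslant\Const\bigl(\Lpnorm{\nabla\vect u}^2+\Lpnorm{\nabla\lapl\psi}^2\bigr)$, and the second estimate of Lemma~\ref{lemma93} gives
\[
\int_\tau^t\norm{\myH1}{\vect z(s)}^2\,\mathrm{d}s\leqslant\Const\left(\norm{\myH0}{\vect z_0}^2+\int_\tau^t\Lpnorm{\vect g(s)}^2\,\mathrm{d}s\right)\bigl(\overline{\dataConst}^6+(t-\tau)\overline{\dataConst}^5\bigr)e^{\Const(\overline{\dataConst}^6+(t-\tau)\overline{\dataConst}^5)}.
\]
For $\int_\tau^t h(s)\,\mathrm{d}s$ I would argue exactly as in Lemma~\ref{lemma94}: each summand of $h$ (in particular the $\Lpnorm[p+3]{\lapl\psi_i}^2$ terms, which are uniform in $p$) is integrable on account of Lemma~\ref{lemmabase}, Lemma~\ref{lemmaint0}, Remark~\ref{remDeltaL4L2} and Lemma~\ref{lemma:ordine superiore}, so that
\[
\int_\tau^t h(s)\,\mathrm{d}s\leqslant Q(\overline{\dataConst},t-\tau)\bigl(1+\norm{\myH1}{\vect z_{01}}^2+\norm{\myH1}{\vect z_{02}}^2+\Lpnorm{\mu_{01}}^2+\Lpnorm{\mu_{02}}^2\bigr)
\]
for a nonnegative increasing $Q$ whose powers of $\overline{\dataConst}$ do not depend on the growth exponent $p$.

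Finally I would assemble the pieces. Substituting the two bounds into the Gronwall estimate for $y(t)$, collecting $\norm{\myH0}{\vect z_0}^2+(1+t-\tau)\int_\tau^t\Lpnorm{\vect g(s)}^2\,\mathrm{d}s$ in front (the ``bare'' term $\Const(t-\tau)\int_\tau^t\Lpnorm{\vect g}^2$ being absorbed since the accompanying polynomial--exponential factor is $\geqslant1$ for $t>\tau$, while at $t=\tau$ both sides vanish), and then bounding the leftover polynomial $\overline{\dataConst}^6+(t-\tau)\overline{\dataConst}^5$ times $e^{\Const(\overline{\dataConst}^6+(t-\tau)\overline{\dataConst}^5)}$ by $e^{Q(\overline{\dataConst},t-\tau)}$ and multiplying by $e^{\int h}$, one obtains a single exponential of the form $e^{Q(\overline{\dataConst},t-\tau)(1+\norm{\myH1}{\vect z_{01}}^2+\norm{\myH1}{\vect z_{02}}^2+\Lpnorm{\mu_{01}}^2+\Lpnorm{\mu_{02}}^2)}$ (using once more that the factor $1+\norm{\myH1}{\vect z_{01}}^2+\cdots\geqslant1$), which is exactly the claimed bound. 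I do not expect a genuine analytic obstacle here: the real work was already carried out in Lemmata~\ref{lemma93} and~\ref{lemma94}, and the only delicate point is the bookkeeping — checking that every term appearing in $h$ and in $\int_\tau^t\norm{\myH1}{\vect z}^2\,\mathrm{d}s$ has indeed been controlled in Sections~\ref{S:higher_order} and~\ref{S:continuous dependence} with a dependence on $\overline{\dataConst}$ only through fixed powers independent of $p$, so that the resulting function $Q$ inherits this uniformity.
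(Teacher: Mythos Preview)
Your proposal is correct and follows essentially the same approach as the paper: multiply the differential inequality \eqref{E:dipalta} by the weight $(t-\tau)$, apply Gronwall with $y(\tau)=0$, bound $\int_\tau^t\norm{\myH1}{\vect z(s)}^2\,\mathrm{d}s$ via the second estimate of Lemma~\ref{lemma93}, and control $\int_\tau^t h(s)\,\mathrm{d}s$ exactly as in Lemma~\ref{lemma94}. Your additional remarks (the justification $\Lpnorm{\lapl\psi}\leqslant\Const\Lpnorm{\nabla\lapl\psi}$ and the bookkeeping about absorbing polynomial prefactors into~$Q$) are correct refinements of steps the paper leaves implicit.
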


\begin{proof}
    Multiplying~\eqref{E:dipalta} by $(t-\tau)$, we obtain the differential inequality
    \begin{align*}
        & \ddt \big( (t-\tau)\norm{\myH1}{\vect z (t)}^2\big)\\
        \leqslant & \norm{\myH1}{\vect z(t)}^2+\Const(t-\tau)\Lpnorm{\vect g(t)}^{2} + h(t)(t-\tau)\norm{\myH1}{\vect z(t)}^2,
    \end{align*}
    where the function $h$ is given as in the proof of Lemma~\ref{lemma94}. By the second estimate in Lemma~\ref{lemma93}, we deduce
    \begin{equation*}
        \int_\tau^t \norm{\myH1}{\vect{z}(s)}^{2}\, {\rm d} s
        \leqslant \Const \left( \norm{\myH0}{\vect{z}_{0}}^{2} + \int_{\tau}^{t} \Lpnorm{\vect{g}(s)}^{2} \, \mathrm{d}s \right) \left( \overline{\dataConst}^{6} + (t-\tau) \overline{\dataConst}^{5} \right) e^{\Const \left( \overline{\dataConst}^{6} + (t-\tau) \overline{\dataConst}^{5} \right)},
    \end{equation*}
    while the integral of $h $ can be bounded as in Lemma~\ref{lemma94}. Thus, the Gronwall's lemma entails
    \begin{align*}
        & (t-\tau)\norm{\myH1}{\vect{z}(t)}^{2}\\
        \leqslant & \Const \left( \norm{\myH0}{\vect{z}_{0}}^{2} + (1+t-\tau)\int_{\tau}^{t} \Lpnorm{\vect{g}(s)}^{2} \, \mathrm{d}s \right) \left( \overline{\dataConst}^{6} + (t-\tau) \overline{\dataConst}^{5} \right) e^{\Const \left( \overline{\dataConst}^{6} + (t-\tau) \overline{\dataConst}^{5} \right)}\\
        & \qquad e^{Q(\overline{\dataConst}, t-\tau) \left(1+   \norm{\myH1}{\vect{z}_{01}}^{2}+\norm{\myH1}{\vect{z}_{02}}^{2} + \Lpnorm{\mu_{01}}^{2} + \Lpnorm{\mu_{02}}^{2} \right)},
    \end{align*}
    which is, the desired estimate.
\end{proof}

\section{Proof of the main results}\label{S:validation}

In  this section we show how, properly choosing the spaces and the operators, relying on the results of previous sections, we can apply Theorem~\ref{T:continuous_time_attractor}, and subsequently Theorem~\ref{T:process_attractor}, to our system so to prove Theorem~\ref{T:main_result} and Corollary~\ref{cor:process_result}.

Let $V$ and $H$ be the spaces $\myH1$ and $\myH0$ respectively. Observe that, whenever the symbol~$\vect{g}$ satisfies assumptions~\eqref{Hp:gL2loc} and~\eqref{Hp:gM2}, thanks to Theorem~\ref{T:existence} and Lemma~\ref{lemma93} the solution operator associated to system~\eqref{E:PDE}-\eqref{E:BC} is well-defined and continuous on $H$. Moreover, thanks to Corollary~\ref{cor:smoothing}, in studying the asymptotic behavior of solutions of~\eqref{E:PDE}-\eqref{E:BC} we can further restrict our attention to the bounded subset of $V$ given by
\begin{equation*}
    B \eqdef \{ \vect{z} \in V \mid \norm{\mathcal{H}_{1}}{\vect{z}} + \Lpnorm{\mu} \leqslant C_{\vect{g}}(t_0) \},
\end{equation*}
which is uniformly (w.r.t.\ the diameter of the set of initial data) absorbing for the solutions of~\eqref{E:PDE}-\eqref{E:BC}. Since the constraint $\Lpnorm{\mu} = \Lpnorm{f(\psi) - \lapl \psi} \leqslant \Const$ is closed w.r.t.\ the topology of $V$, we can further restrain our attention to the set
\begin{equation*}
    \mathcal{O}_{\delta}^{\mu}(B) \eqdef \mathcal{O}_{\delta} (B)\cap \{ \Lpnorm{\mu} \leqslant \Const_{\vect{g}}(t_0) \}
\end{equation*}
when discussing the exponential decay of solution towards an exponential pullback attractor.

Let $\tau \in \mathbb{R}$ be given and let $\vect{g}$ satisfy~\eqref{Hp:gL2loc} and~\eqref{Hp:gMq} (so that~\eqref{Hp:gM2} holds true as well for $t \leqslant t_{0}$, which is enough for our scopes), we denote by $U_{\vect{g}}(t, \tau)$ the solution operator to problem~\eqref{E:PDE}-\eqref{E:BC} at time $t \geqslant \tau$ with symbol $\vect{g}$ and initial data in $V$. Thanks to Lemmata~\ref{lemma:ordine superiore} and~\ref{lemma94} the process $U_{\vect{g}}(t, \tau) \colon V \to V$ is well-defined and continuous on $\mathcal{O}_{\delta}^{\mu}$. Therefore, if $t_{0}$ is the time appearing in assumption~\eqref{Hp:gMq}, the restricted family $\{ U_{\vect{g}}(t, \tau) \colon \quad \tau \leqslant t \leqslant t_{0} \}$ belongs to the class $\mathcal U(V, t_{0})$. Since the set $\mathcal{O}_{\delta}^{\mu}$ is uniformly absorbing for the family $\{ U_{\vect{g}}(t, \tau) \}$, fixing $\delta=1$ and the time span
\begin{equation*}
    \tau_{0} \doteq 5 + T \left(|\mathcal{O}_{1}^{\mu}(B)| \right),
\end{equation*}
we deduce by Corollary~\ref{cor:smoothing} and Lemma~\ref{lemma95} that $U_{\vect{g}}(t, t-\tau_{0})\in \mathcal{S}_{1, L}(B)$, where the constant $L$ depends increasingly on $\tau_{0},$ $\Const_{\vect{g}}(t_{0})$ and $M_{\vect{g}}(t_{0})$.

In this framework we can now verify assumptions~\eqref{H:continuity_forcing}--\eqref{H:time_continuity} for system~\eqref{E:PDE}-\eqref{E:BC}. Indeed, \eqref{H:back_continuous_dependence} is a straightforward consequence of Lemma~\ref{lemma94}. The coupling between the Navier-Stokes and the convective Cahn-Hilliard equations makes more involved the validation of assumptions \eqref{H:continuity_forcing} and \eqref{H:time_continuity}. In order to avoid further requirements over the symbols but \eqref{Hp:gMq}, we exploit Lemma \ref{lemma92} and  interpolation, thanks to a smoothing in the solution. The technical details of our argument are contained in the following lemma.
\begin{lemma}\label{lemma:q}
    Assume that $\g$ satisfies~\eqref{Hp:gL2loc} and~\eqref{Hp:gMq}. Then there exists a positive constant~$\Const$ depending on the exponent~$q$ in~\eqref{Hp:gMq} such that, for any initial datum $\vect{z}_{0} \in \mathcal{O}_{1}^{\mu}(B)$, the solution $\vect{z}(t) = (\vect{u}(t), \psi(t)) = U_{\vect{g}}(t, \tau) \vect{z}_{0}$ satisfies
    \begin{equation}\label{E:interpola}
        \norm{\Hsvect[\sfrac {(2q-2)} q]_{0,\divfree}(\Omega)}{\vect u(t)} + \norm{\Hs[3]}{\psi(t)} \leqslant \Const,
    \end{equation}
    for any $t \leqslant t_{0}-1$ and $\tau \leqslant t - 1 - \tau_{0}$.
\end{lemma}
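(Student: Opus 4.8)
The plan is to exploit the extra integrability of the forcing term coming from Assumption~\eqref{Hp:gMq} to upgrade the $\Lp[\infty]$-in-time bound on $\norm{\myH1}{\vect z(t)}$, already available from Corollary~\ref{cor:smoothing}, to a bound on a slightly higher space. The key observation is that $q>2$ gives $\vect g \in \Lp[q]_{\uloc}$ so that, by Sobolev embedding in time, solutions of parabolic-type equations with such a source gain fractional time regularity, which can then be traded for spatial regularity by interpolation between $\myH0$ and $\myH2$-type norms. First I would invoke Corollary~\ref{cor:smoothing}: for $\vect z_0 \in \mathcal O_1^\mu(B)$ and $\tau \leqslant t-1-\tau_0$ (recall $\tau_0 \geqslant 5 + T$) we already know $\norm{\myH1}{\vect z(t)} + \Lpnorm{\mu(t)} \leqslant \Const_{\vect g}(t_0)$ together with the integral bounds $\int_{t-1}^t (\Lpnorm{\lapl \vect u(s)}^2 + \Lpnorm{\lapl^2 \psi(s)}^2)\,\mathrm ds \leqslant Q(M_{\vect g}(t_0))$, valid for $t \leqslant t_0-1$ with room to spare, i.e.\ on an interval.

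Next I would estimate the time derivatives. From equation~\eqref{E:PDE}, $\pt \vect u = -\mathbb P[(\vect u \cdot \nabla)\vect u] + \nu \lapl \vect u + \mathbb P[\mu \nabla \psi] + \mathbb P\vect g$; using the already-established bounds on $\vect u$ in $\Hsvect[2]$, $\psi$ in $\Hs[3]$ and $\mu$ in $\Lp$ (uniformly in time on the relevant interval) all terms except $\mathbb P\vect g$ lie in $\bochner{\Lp[\infty]}{\Lpvect_{\divfree}(\Omega)}$, while $\mathbb P \vect g \in \bochner{\Lp[q]}{\Lpvect_{\divfree}(\Omega)}$ by~\eqref{Hp:gMq}. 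Hence $\pt \vect u \in \bochner{\Lp[q]}{\Lpvect_{\divfree}(\Omega)}$ on $[t-1,t]$ with a bound of the form $\Const(1 + M_{\vect g,q}(t_0)^{1/q})$. Combined with $\vect u \in \bochner{\Lp[q]}{\Hsvect[2]_{0,\divfree}(\Omega)}$ (from the square-integrable bound on $\lapl \vect u$, which a fortiori gives $\Lp[q]$ after using the $\Lp[\infty]$ bound, or directly), one obtains $\vect u \in W^{1,q}(t-1,t;\Lpvect_{\divfree}) \cap \Lp[q](t-1,t;\Hsvect[2]_{0,\divfree})$. By the standard interpolation/trace theorem for such anisotropic spaces, $\vect u \in \Cont([t-1,t];\Hsvect[2-2/q]_{0,\divfree}(\Omega))$, which yields the pointwise bound on $\norm{\Hsvect[(2q-2)/q]_{0,\divfree}(\Omega)}{\vect u(t)}$ for $t \leqslant t_0-1$. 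The value $2-2/q = (2q-2)/q$ is exactly the exponent appearing in~\eqref{E:interpola}.

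For the order parameter, the third equation reads $\pt \psi = \lapl \mu - (\vect u \cdot \nabla)\psi$, and Remark~\ref{rem} already gives $\pt \psi \in \bochner{\Lp}{\Lp(\Omega)}$ with a uniform bound via Corollary~\ref{cor:smoothing}, together with $\psi \in \bochner{\Lp}{\Hs[4](\Omega)}$. Interpolating as in Lemma~\ref{lemma92}, $\norm{\Hs[3]}{\psi(t)}^2 \leqslant \Const \norm{\Hs[2]}{\psi(t) - \psi(t-1)}\,\norm{\Hs[4]}{\cdot} + \dots$, or more directly by the embedding $H^1(t-1,t;\Lp)\cap \Lp[2](t-1,t;\Hs[4]) \hookrightarrow \Cont([t-1,t];\Hs[2])$ and then interpolating pointwise with the $\Lp[2]$-in-time $\Hs[4]$ bound to get a pointwise $\Hs[3]$ bound on a shifted interval; this costs one more unit of time, explaining the hypothesis $\tau \leqslant t-1-\tau_0$ rather than $\tau \leqslant t-\tau_0$. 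Collecting both estimates and absorbing all data-dependent quantities into the single constant $\Const$ (which depends on $q$, on $M_{\vect g}(t_0)$, $M_{\vect g,q}(t_0)$, $\Const_{\vect g}(t_0)$ and $\Omega$, but not on the initial datum within $\mathcal O_1^\mu(B)$) yields~\eqref{E:interpola}. The main obstacle I anticipate is bookkeeping the time windows correctly: each application of a trace/interpolation theorem consumes part of the interval on which the uniform $\myH1$ and $\lapl^2\psi$, $\lapl \vect u$ bounds of Corollary~\ref{cor:smoothing} hold, so one must check that the final interval is still of the form required, and that the constant genuinely does not blow up — here the fact that $q$ is fixed (not tending to $2$) is what keeps the interpolation constant finite.
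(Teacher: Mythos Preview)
Your overall strategy---use the extra time-integrability of $\vect g$ to place $\vect u$ in $W^{1,q}(\Lpvect_{\divfree})\cap L^q(\Hsvect[2]_{0,\divfree})$ and then invoke the trace embedding into $C(\Hsvect[2-2/q]_{0,\divfree})$---is the right \emph{target}, and it is exactly what the paper reaches. However, the way you try to enter those spaces has a genuine circularity. You write $\pt\vect u=-\mathbb P[(\vect u\cdot\nabla)\vect u]+\nu\lapl\vect u+\mathbb P[\mu\nabla\psi]+\mathbb P\vect g$ and then claim that every term on the right except $\vect g$ lies in $\bochner{L^\infty}{\Lpvect_{\divfree}}$, appealing to ``the already-established bounds on $\vect u$ in $\Hsvect[2]$''. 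Corollary~\ref{cor:smoothing} does \emph{not} give a pointwise $\Hsvect[2]$ bound on $\vect u$; it gives only $\vect u\in L^\infty(\Hsvect[1])\cap L^2(\Hsvect[2])$. In particular $\nu\lapl\vect u$ is only in $L^2$ in time, and $(\vect u\cdot\nabla)\vect u$, $\mathbb P(\mu\nabla\psi)$ are at best in $L^4$ in time by Ladyzhenskaya-type interpolation---never in $L^\infty$. Likewise, ``$L^2$ a fortiori gives $L^q$'' is false for $q>2$. The fix is the one the paper uses: keep the Stokes operator on the left, set $\vect h\doteq -\mathbb P(\vect u\cdot\nabla\vect u)-\mathbb P(\lapl\psi\,\nabla\psi)+\vect g$, verify $\int_{t-2}^t\Lpnorm{\vect h(s)}^q\,\mathrm ds\leqslant Q(M_{\vect g,q}(t_0))$ from the $L^\infty(\Hsvect[1])\cap L^2(\Hsvect[2])$ and $L^\infty(H^2)\cap L^2(H^4)$ bounds, and then invoke maximal $L^q$-regularity for the Stokes problem (Giga--Sohr) to obtain $\pt\vect u,\,A\vect u\in L^q(\Lpvect_{\divfree})$ simultaneously. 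Only then is the trace embedding applicable.

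For the order parameter, your interpolation/embedding argument also falls short. The Lions--Magenes embedding $H^1(t-1,t;\Lp)\cap L^2(t-1,t;\Hs[4])\hookrightarrow C([t-1,t];\Hs[2])$ is correct, but it only yields a pointwise $\Hs[2]$ bound, not $\Hs[3]$; and a pointwise $\Hs[2]$ bound cannot be upgraded to $\Hs[3]$ by ``interpolating pointwise with an $L^2$-in-time $\Hs[4]$ bound''---that operation is not defined. The paper obtains the $\Hs[3]$ bound by a direct energy argument: multiply the Cahn--Hilliard equation by $-2\lapl\pt\psi$, derive a differential inequality of the form $\frac{\mathrm d}{\mathrm dt}\Lpnorm{\nabla\lapl\psi}^2\leqslant h$ with $\int_{t-1}^t h(s)\,\mathrm ds\leqslant Q(M_{\vect g}(t_0))$ (all nonlinear terms from $f(\psi)$ being controlled via the $L^8(L^4)$ bound on $f(\psi)$ and the $L^2(H^4)$ bound on $\psi$), and then apply the Uniform Gronwall lemma. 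This yields $\Lpnorm{\nabla\lapl\psi(t)}\leqslant\Const$ pointwise in time, which is the $\Hs[3]$ estimate.
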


\begin{proof}
    In this proof, we consider the two equations separately: first, as in~\cite{Langa2010}, we apply the Giga-Sohr argument (see~\cite{Giga1991}) to the equation
    \begin{equation}\label{NS4GS}
        \pt \u-\nu \mathbb{P}\Delta\u= \vect{h},
    \end{equation}
    where
    \begin{equation*}
        \vect h \doteq -\mathbb{P}\vect u\cdot \nabla \vect u+\mathbb{P}\mu\nabla \psi+\vect g.
    \end{equation*}
    By H\"older's and Gagliardo-Nirenberg's inequalities, we obtain
    \begin{equation*}
        \Lpnorm{\vect u\cdot \nabla \vect u} \leqslant \Const\Lpnorm[q]{\vect u}\Lpnorm[\sfrac {2q}{(q-2)}]{\nabla\vect u}
        \leqslant \Const \Lpnorm{\nabla\vect u}^{\sfrac{2(q-1)}{q}}\Lpnorm{\lapl \vect u}^{\sfrac 2q}.
    \end{equation*}
    Moreover, recalling that $f(\psi) \nabla \psi \in \Lpvect(\Omega)$ and $f(\psi) \nabla \psi \in \Lpvect_{\divfree}(\Omega)^{\perp}$
    \begin{equation*}
        \Lpnorm{\mathbb{P}\mu\nabla \psi} \leqslant \Lpnorm{\lapl \psi\nabla \psi} \leqslant \Const \Lpnorm{\lapl\psi}^{\sfrac{2(q-1)}{q}}\Lpnorm{\lapl^2 \psi}^{\sfrac 2q},
    \end{equation*}
    (actually this estimate is not optimal but, due to the previous estimates on the velocity field above, this does not have any influence on the final outcome) then Corollary~\ref{cor:smoothing} and assumption~\eqref{Hp:gMq} ensure
    \begin{equation*}
        \int_{t-2}^t \Lpnorm{\vect h(s)}^q {\rm d}s\leqslant Q(M_{\vect{g},q}(t_0)),\qquad \tau\leqslant t-6-T(|\mathcal O^\mu_1(B)|)=t-1-\tau_0,
    \end{equation*}
    which is identical to~\cite[Equation~(84)]{Langa2010}. From this estimate, arguing as in~\cite{Langa2010} we deduce the bound on $\vect{u}$
    \begin{equation*}
        \norm{\Hsvect[\sfrac{(2q-2)}{q}]_{0, \divfree}(\Omega)}{\vect{u}(t)} \leqslant Q(M_{\vect{g},q}(t_0)),
    \end{equation*}
    which is the first part of~\eqref{E:interpola}.

    We now turn our attention to the Cahn-Hilliard equation. The product of the third equation in~\eqref{E:PDE} by $-2\lapl \dt{\psi}$ leads to
    \begin{align*}
        & \timeder \Lpnorm {\nabla \lapl \psi}^2+2\Lpnorm{\nabla\pt \psi}^2\\
        = & -2\duality{ \u \cdot \nabla\pt \psi}{\lapl\psi} + 2\duality{ f'(\psi)\nabla\lapl\psi}{\nabla\pt \psi} + 2\duality{ f''(\psi) \nabla\psi \lapl\psi}{ \nabla\pt \psi}\\
        & \quad {} + 4\duality{ f''(\psi) \nabla\psi \nabla^2\psi}{ \nabla\pt \psi} + 2\duality{ f'''(\psi) \nabla\psi |\nabla\psi|^2}{\nabla\pt \psi}.
    \end{align*}
    By Ladyzhenskaja inequality and interpolation, the first term on the right hand side can be controlled as
    \begin{align*}
        &\left|-2\duality{\vect u\cdot \nabla \pt\psi}{\lapl \psi}\right|\\
        \leqslant & 2 \Lpnorm[4]{\vect u} \Lpnorm[4]{\lapl \psi} \Lpnorm{\nabla\pt\psi}\\
        \leqslant & \Const \Lpnorm{\vect u}^{\sfrac12} \Lpnorm{\nabla\vect u}^{\sfrac{1}{2}} \Lpnorm{\lapl \psi}^{\sfrac{3}{4}} \Lpnorm{\lapl^{2} \psi}^{\sfrac{1}{4}} \Lpnorm{\nabla \dt{\psi}}\\
        \leqslant & \frac{1}{4} \Lpnorm{\nabla \dt{\psi}}^{2} + \Lpnorm{\lapl^{2} \psi}^{2} + \Const \Lpnorm{\vect{u}}^{\sfrac{4}{3}}\Lpnorm{\nabla \vect{u}}^{\sfrac{4}{3}} \Lpnorm{\lapl \psi}^{2}
    \end{align*}
    and, similarly, the second one is
    \begin{align*}
        &\left|2\duality{ f'(\psi)\nabla\lapl \psi}{\nabla\pt \psi}\right|\\
        \leqslant & 2 \Lpnorm[4]{f'(\psi)} \Lpnorm[4]{\nabla\lapl\psi} \Lpnorm{\nabla\pt\psi}\\
        \leqslant & \Const \Lpnorm[4]{f'(\psi)}^{} \Lpnorm{\lapl\psi}^{\sfrac 14} \Lpnorm{\lapl^2\psi}^{\sfrac 34} \Lpnorm{\nabla\pt\psi}^{}\\
        \leqslant & \frac14 \Lpnorm{\nabla\pt\psi}+\Lpnorm{\lapl^2\psi}^2+\Const \Lpnorm[4]{f'(\psi)}^8 \Lpnorm{\lapl \psi}^4.
    \end{align*}
    Taking advantage also of Agmon's and Korn's inequalities, we compute
    \begin{align*}
        & \left|2\duality{ f''(\psi) \nabla\psi \lapl\psi}{ \nabla\pt \psi} + 4\duality{ f''(\psi) \nabla\psi \nabla^2\psi}{ \nabla\pt \psi}\right|\\
        \leqslant & 2 \Lpnorm[4]{f''(\psi)} \Lpnorm[\infty]{\nabla\psi} \Lpnorm[4]{\lapl\psi} \Lpnorm{\nabla\pt\psi} + 4 \Lpnorm [4]{f''(\psi)}\Lpnorm[\infty]{\nabla\psi} \Lpnorm[4]{\nabla^2\psi} \Lpnorm{\nabla\pt\psi}\\
        \leqslant & \Const \Lpnorm [4]{f''(\psi)} \Lpnorm{\nabla\psi}^{\sfrac{1}{2}} \Lpnorm{\nabla\lapl\psi}^{\sfrac{1}{2}} \Lpnorm{\lapl\psi}^{\sfrac{1}{2}} \Lpnorm{\nabla\lapl \psi}^{\sfrac{1}{2}} \Lpnorm{\nabla \pt\psi}^{}\\
        \leqslant & \Const \Lpnorm [4]{f''(\psi)} \Lpnorm{\nabla\psi}^{\sfrac{1}{2}} \Lpnorm{\lapl\psi} \Lpnorm{\lapl^2 \psi}^{\sfrac{1}{2}}\Lpnorm{\nabla\pt\psi}^{}\\
        \leqslant & \frac{1}{4} \Lpnorm{\nabla\pt\psi}^2+ \Lpnorm{\lapl^2\psi}^2 + \Const \Lpnorm [4]{f''(\psi)}^{4} \Lpnorm{\nabla\psi}^{2} \Lpnorm{\lapl\psi}^{4}.
    \end{align*}
    Finally,
    \begin{equation*}
        \left|2\duality{ f'''(\psi) \nabla\psi |\nabla\psi|^2}{\nabla\pt \psi}\right|
        \leqslant \Const \Lpnorm[4]{f'''(\psi)}^{} \Lpnorm{\Delta\psi}^{3} \Lpnorm{\nabla\pt\psi} \leqslant  \frac 14 \Lpnorm{\nabla\pt\psi}^2+ \Const \Lpnorm [4]{f'''(\psi)}^2 \Lpnorm{\lapl\psi}^6.
    \end{equation*}
    Collecting the above estimates, we obtain
    \begin{equation*}
        \timeder \Lpnorm{\nabla\lapl \psi}^2 \leqslant h,
    \end{equation*}
    where
    \begin{align*}
        h & = \Const \left(\Lpnorm{\lapl^{2} \psi}^{2} + \Lpnorm[4]{f'(\psi)}^8 \Lpnorm{\lapl \psi}^4+\Lpnorm{\vect u}^{\sfrac 43} \Lpnorm{\nabla\u}^{\sfrac 43} \Lpnorm{\lapl \psi}^2
        +\Lpnorm[4]{f''(\psi)}^{4} \Lpnorm{\nabla\psi}^{2} \Lpnorm{\lapl \psi}^{4} + \Lpnorm[4]{f'''(\psi)}^2 \Lpnorm{\lapl \psi}^6 \right).
    \end{align*}
    Having observed that by Assumption~\eqref{Hp:constants} we have
    \begin{align*}
        \Lpnorm[4]{f'(\psi)}+ \Lpnorm[4]{f''(\psi)}+ \Lpnorm[4]{f'''(\psi)} \leqslant  \Lpnorm[4]{f(\psi)},
    \end{align*}
    then Corollary~\ref{cor:smoothing} provides
    \begin{equation*}
        \int_{t-1}^t h(s) {\rm d}s \leqslant Q(M_{\vect{g}}(t_0)), \quad \tau \leqslant t-5-T(|\mathcal O^\mu_1(B)|),
    \end{equation*}
    by the Uniform Gronwall's lemma, we deduce
    \begin{equation*}
        \Lpnorm{\nabla\lapl \psi(t)}^2\leqslant Q(M_{\vect{g}}(t_0)) \quad t\geqslant \tau+5+T(|\mathcal O^\mu_1(B)|). \qedhere
    \end{equation*}
\end{proof}

We can now show that~\eqref{H:continuity_forcing} holds true. Having fixed $\vect{g}$ satisfying~\eqref{Hp:gL2loc} and~\eqref{Hp:gMq}, we denote by $\Const$ a generic positive constant depending only on $M_{\vect g,q}(t_0)$. Then we observe that by Corollary \ref{cor:smoothing} there holds
\begin{equation*}
    \sup_{\vect z_0\in \mathcal{O}_{1}^{\mu}(B)}\norm{\myH1}{U_{\vect g}(t,\tau)\vect z_0}\leqslant C
\end{equation*}
for $t\leqslant t_0$ and $\tau\leqslant t-\tau_0$. Besides, Corollary \ref{cor:diss} entails
\begin{equation}\label{databdd}
    \dataConst\leqslant C,\qquad t\leqslant t_0,\quad \tau\leqslant t-\tau_0, \quad \forall \vect z_0\in \mathcal{O}_{1}^{\mu}(B).
\end{equation}
Therefore, arguing as in Lemma \ref{lemma92}, for any initial datum $\vect z_0\in \mathcal{O}_{1}^{\mu}(B)$, we have
\begin{equation}\label{sqrts}
    \norm{\myH0}{U_{\vect g}(t,\tau)\vect z_0-U_{\vect g}(t-s,\tau )\vect z_0}\leqslant C\sqrt s,
\end{equation}
for $t\leqslant t_0-1$, $0\leqslant s\leqslant 1$ and $\tau\leqslant t-\tau_0$. To proceed in our argument, we need to consider the two variable separately: thus, for any initial datum $\vect{z}_{0} \in \mathcal{O}_{1}^{\mu}(B)$, we set $\left(\vect u(t,\tau),\psi(t,\tau)\right)=U_{\vect g}(t,\tau)\vect z_0$. Provided that $t\leqslant t_0-1$, $0\leqslant s\leqslant 1$, $\tau_0\leqslant r\leqslant 2\tau _0$, by interpolation and Lemma \ref{lemma:q}, we have
\begin{align*}
    & \norm{\Hsvect_{0, \divfree}(\Omega)}{\vect u(t,t-r)-\vect u(t-s,t-s-r)}\\
    \leqslant & \Const \norm{\Lpvect_{\divfree}(\Omega)}{\vect u(t,t-r)-\vect u(t-s,t-s-r)}^{\sfrac{(q-2)}{(2q-2)}}
        \norm{\Hsvect[\sfrac {(2q-2)} q]_{0,\divfree}(\Omega)}{\vect u(t,t-r)-\vect u(t-s,t-s-r)}^{\sfrac {q}{(2q-2)} }\\
    \leqslant & \Const \norm{\Lpvect_{\divfree}(\Omega)}{\vect u(t,t-r)-\vect u(t-s,t-s-r)}^{\sfrac{(q-2)}{(2q-2)}},
\end{align*}
as well as
\begin{align*}
    & \norm{H^2(\Omega)}{\psi (t,t-r)-\psi(t-s,t-s-r)}\\
    \leqslant & \Const \norm{H^1(\Omega)}{\psi (t,t-r)-\psi(t-s,t-s-r)}^{\sfrac12}
        \norm{H^3(\Omega)}{\psi (t,t-r)-\psi(t-s,t-s-r)}^{\sfrac12}\\
    \leqslant & \Const \norm{{H^1(\Omega)}}{\psi (t,t-r)-\psi(t-s,t-s-r)}^{\sfrac12}.
\end{align*}
Thus we are left to control a $\myH0$-norm which can be split into two parts as
\begin{align*}
    & \norm{\myH0}{U_{\vect g}(t,t-r)\vect z_0-U_{\vect g}(t-s,t-s-r)\vect z_0}\\
    \leqslant & \norm{\myH0}{U_{\vect g}(t,t-r)\vect z_0-U_{\vect g}(t-s,t-r)\vect z_0}
        + \norm{\myH0}{U_{\vect g}(t-s,t-r)\vect z_0-U_{\vect g}(t-s,t-s-r)\vect z_0}\\
    \leqslant & \Const \sqrt{s}+\norm{\myH0}{U_{\vect g}(t-s,t-r)\vect z_0-U_{\vect g}(t-s,t-s-r)\vect z_0},
\end{align*}
thanks to \eqref{sqrts}. In order to control the last term, we observe that, by~\eqref{E:dissipative0} with initial datum $U_{\vect g}(t-r, t-s-r)\vect z_0$, it follows
\begin{align*}
    & A_{t-s,t-r}\\
    = & \left( 1 + \norm{\myH0}{\vect{z}(t-r, t-s-r)}^{2} + 2|F(\psi(t-r, t-s-r))|_{1} + \int_{t-r}^{t-s} \Lpnorm{\vect g(s)}^{2} \, \mathrm{d}s \right)\\
    \leqslant & \Const \left( (1 + \norm{\myH0}{\vect{z}_{0}}^{2} + 2|F(\psi_{0})|_{1}) e^{-\Const s} + M_{\vect{g}}(t_{0}) \right)\\
    \leqslant & \Const (1+M_{\vect g}(t_0)).
\end{align*}
Exploiting Lemma~\ref{lemma93} with $\vect{z}_{1} = \vect{z}_{0}$ and $\vect {z}_{2}=U_{\vect{g}}(t-s, t-s-r) \vect{z}_{0}$ together with $\tau_{0} \leqslant r \leqslant 2 \tau_{0}$ and~\eqref{databdd}, the desired norm can be written as
\begin{align*}
    & \norm{\myH0}{U_{\vect g}(t-s,t-r)\vect z_0-U_{\vect g}(t-s,t-s-r)\vect z_0}\\
    = & \norm{\myH0}{U_{\vect g}(t-s,t-r)\vect z_0-U_{\vect g}(t-s,t-r) U_{\vect g}(t-r,t-s-r)\vect z_0}\\
    \leqslant & \Const \norm{\myH0}{\vect z_0- U_{\vect g}(t-r,t-s-r)\vect z_0} e^{\Const \left(1+M_{\vect g}(t_0)^{6}\right) } .
\end{align*}
Finally, we can use Lemma~\ref{lemma92} and \eqref{databdd} so to obtain
\begin{equation*}
    \norm{\myH0}{\vect z_0- U_{\vect g}(t-r,t-s-r)\vect z_0}
        \leqslant \Const \sqrt{s} \left( \Lpnorm{\mu(t-s-r)}^{2} + A_{t-r,t-s-r}^{3} \right) A_{t-r,t-s-r}^{8}  e^{\Const A_{t-r,t-s-r}^{4} },
\end{equation*}
where the first term on the right hand side is bounded by
\begin{equation*}
    \Lpnorm{\mu(t-r-s)}^{2} \leqslant \Const_{\vect g}^2(t_0)
\end{equation*}
as a consequence of Corollary~\ref{cor:smoothing} and of the absorbing set considered. This together with the H\"older's inequality and \eqref{databdd} yields the uniform estimate
\begin{align*}
    A_{t-r,t-s-r} \leqslant \Const (1+M_\g(t_0))
\end{align*}
for all $t \leqslant t_{0} - 1$, $0 \leqslant s \leqslant 1$ and $\tau_{0} \leqslant r \leqslant 2 \tau_{0}$. We thus obtain
\begin{equation*}
    \norm{\myH0}{\vect z_0- U_{\vect g}(t-r,t-s-r)\vect z_0} \leqslant \Const \sqrt{s},
\end{equation*}
which, replaced in the above inequalities, gives
\begin{align*}
    & \norm{\myH1}{U_{\vect{g}}(t, t-r) \vect{z}_{0} - U_{\vect{g}}(t-s, t-s-r) \vect{z}_{0}}\\
    \leqslant & \Const \norm{\myH0}{U_{\vect{g}}(t, t-r) \vect{z}_{0} - U_{\vect{g}}(t-s, t-s-r) \vect{z}_{0}}^{\sfrac{(q-2)}{2(q-1)}}\\
    \leqslant & \Const s^{\sfrac {(q-2)}{4(q-1)}} + \Const \norm{\myH0}{\vect{z}_{0} - U_{\vect{g}}(t-r, t-s-r) \vect{z}_{0}}^{\sfrac{(q-2)}{2(q-1)}}\\
    \leqslant & \Const s^{\sfrac {(q-2)}{4(q-1)}},
\end{align*}
for any $\vect{z}_{0} \in \mathcal{O}_{1}^{\mu}(B)$ and $t \leqslant t_{0} - 1$, $0 \leqslant s \leqslant 1 = \epsilon_{0}$, $\tau_{0} \leqslant r \leqslant 2\tau _{0}$, so that~\eqref{H:continuity_forcing} holds true.

We now turn our attention to~\eqref{H:time_continuity}, whose proof is now straightforward and follows from interpolation, Lemmata~\ref{lemma:q} and~\ref{lemma92}: indeed, for any $\vect{z}_{0} \in B$,
\begin{equation*}
    \norm{\myH1}{U_{\vect g}(t,t-r)\vect z_0-U_{\vect g}(t-s,t-r)\vect z_0}
    \leqslant \Const s^{\sfrac {(q-2)}{4(q-1)}},
\end{equation*}
for any $t \leqslant t_{0}$, $\tau_{0} \leqslant r \leqslant 2 \tau_{0}$, $0 \leqslant s \leqslant 1 = \epsilon_{0}$. This end the proof of Theorem~\ref{T:main_result}.

Finally, if~\eqref{Hp:gMq} holds uniformly for $t_{0} \in \mathbb{R}$ (exactly as~\eqref{Hp:gM2}), then it is easy to see that the above argument applies for all times to the process $U_{\vect{g}}(t, \tau)$. In particular, \eqref{H:forward_continuous_dependence} follows from Lemma~\ref{lemma94}. This proves Corollary~\ref{cor:process_result}.

%

\bibliographystyle{siam}
\bibliography{Pullback}{}

\end{document}